\newtcolorbox[blend into=figures]{myfigure}[2][]{float=htb, title={#2},#1}
\def\ds{\displaystyle}
\def\R{\mathbb R}
\newcommand{\RR}{\mathbb{R}}
\def\Z{\mathbb Z}
\def\Im{\text{Im}}
\def\dim{\text{dim}}
\def\var{\text{Var}}
\def\P{\mathbb{P}}
\def\E{\mathbb{E}}
\newtheorem{theorem}{Theorem}[section]
\newtheorem{lemma}[theorem]{Lemma}
\newtheorem{proposition}[theorem]{Proposition}
\newtheorem{corollary}[theorem]{Corollary}
\theoremstyle{definition}
\newtheorem{definition}[theorem]{Definition}
\newtheorem{example}[theorem]{Example}
\newtheorem{remark}[theorem]{Remark}
\newtheoremstyle{example_contd}
{0.3cm}% Space above
{0.3cm}% Space below
{\upshape}% Body font
{}% Indent amount (empty = no indent, \parindent = para indent)
{\bfseries\scshape}% Thm head font
{.}% Punctuation after thm head
{0.5em}% Space after thm head (\newline = linebreak)
{ \thmname{#1} \thmnumber{ #2}\thmnote{#3} (continued)}% Thm head spec
\theoremstyle{example_contd}
\newtheorem*{example_contd}{\hspace*{-0.45em}Example}
\begin{document}

\title[Prevalence of multistationarity and absolute concentration robustness]{Prevalence of multistationarity and  \\ absolute concentration robustness in reaction networks}

\author{Badal Joshi}
\address[Badal Joshi]{California State University, San Marcos}

\author{Nidhi Kaihnsa}
\address[Nidhi Kaihnsa]{University of Copenhagen}

\author{Tung D. Nguyen}
\author{Anne Shiu}
\address[Tung D. Nguyen and Anne Shiu]{Texas A\&M University}
\email{bjoshi@csusm.edu,nidhi@math.ku.dk,daotung.nguyen@tamu.edu,annejls@math.tamu.edu}
\subjclass{60F99, 60C05, 92E20, 5C80, 37N25} % 60 is probability

% SET THIS FOR ARXIV
\date{\today}
\maketitle

\vspace{-0.5cm}

\begin{abstract}

For reaction networks arising in systems biology, % and other applications, 
the capacity for two or more steady states, that is, multistationarity, is an important property that underlies biochemical switches. 
%Besides multistationarity, 
Another property receiving much attention recently is absolute concentration robustness (ACR), which means that some species concentration is the same at all positive steady states. 
In this work, we investigate the prevalence of each property while paying close attention to when 
the properties occur together. 
%reaction networks possess both properties.  
Specifically, we consider a stochastic block framework for generating random networks, and prove edge-probability thresholds at which – with high probability – multistationarity appears and ACR becomes rare. 
We also show that the small window in which both properties occur only appears in networks with many species.  
% Our findings indicate that reaction networks often exhibit ACR when they are sparse enough. 
% On the contrary, when reaction networks are dense enough, they are likely multistationary. Reaction networks exhibiting both properties are rare, and the window for the co-existence of both properties does not exist unless the number of species are very large.
Taken together, our results confirm that, in random reversible networks, ACR and multistationarity together, or even ACR on its own, is highly atypical. 
 Our proofs rely on two prior results, one pertaining to the prevalence of networks with deficiency zero, and the other “lifting” multistationarity from small networks to larger ones.

\vskip 0.1in

\noindent
{\bf Keywords:} Multistationarity, absolute concentration robustness, random graph, stochastic block model, threshold function, reaction network.
\end{abstract}

\section{Introduction}\label{sec:introduction}

In biochemical reaction networks, multistationarity is often a desirable phenomenon, as it is associated with biochemical switches, cellular signaling, 
% ANNE: Let's use the Oxford comma
and decision-making~\cite{tyson-albert}.  
A network is multistationary when there are two or more compatible positive steady states; ``compatible'' means that the steady states have the same conserved quantities such as total mass.  Which reaction networks are multistationary?  This question has a long history, 
and many results have been established (see the survey~\cite{mss-review}).
%The question of what reaction networks exhibit multistationarity has a long-standing history, and many results have been established (see the survey~\cite{mss-review}). 

Another significant property exhibited by some biochemical reaction networks is absolute concentration robustness (ACR), which refers to when a steady-state species concentration is maintained even when initial conditions are changed.  The concept of ACR was first popularized by Shinar and Feinberg in 2010~\cite{ACR} and has since attracted much interest both from the mathematical standpoint \cite{anderson2017finite, foundation_ACR, MST, joshi2022motifs} and in applications~\cite{briat2016antithetic, kim2020absolutely}.
%Morally, multistationarity and ACR are opposite behaviors. 
%\textcolor{blue}{[If both properties are present, the steady states cannot be in general position. 
%For a parametrized family of dynamical systems (such as mass action systems), this is unlikely to occur for a positive measure set of parameters.
%An exception to this is when the network itself has some special symmetry properties. In this work, we establish that if such symmetry properties do exist, they are rare and vanish asymptotically.] Probably need to move this part or integrate it into the other parts of the intro.}
%However, checking whether a given network has either of these properties is generally difficult~\cite{mss-review,MST}. 

While each of these two properties has been studied in isolation, their relationship is not well understood. 
Nonetheless, multistationarity and ACR can be viewed as opposite behaviors, as multiple steady states cannot be in general position if ACR is present. % It is not surprising that 
Indeed, known examples of networks having ACR (for instance, those in \cite{ACR}) typically are non-multistationary. 

Accordingly, the driving motivation of this article is to explore the relationship between multistationarity and ACR and to investigate the prevalence of networks with either property. 
However, it is generally challenging to assess multistationarity and ACR~\cite{mss-review,MST}. Therefore, following the approach of Anderson and Nguyen~\cite{prevalence_block, prevalence}, we instead investigate the {\em prevalence} of these properties in randomly generated reaction networks.  Specifically, we prove asymptotic results on the probability that such a network has either property, as the number of species goes to infinity. 

A summary of our results appears in Table~\ref{tab:summary-prevalence}, which pertains to when reaction networks are randomly generated by a certain {\em stochastic block model} in which the expected numbers of reactions of each ``type" are roughly of the same magnitude.  Here, the type refers to which forms of complexes -- $0, X_i, 2X_i, X_i+X_j$ -- appear in the reaction (notice that we restrict our attention to  \textit{at-most-bimolecular} networks, which encompass most reaction networks arising in biochemistry).  We prove edge-probability thresholds for the resulting reaction networks to be nondegenerately multistationary or to preclude ACR (Theorem \ref{thm:maintheorem}), and a restatement of this result in terms of the expected number of edges (that is, reactions) is in Table~\ref{tab:summary-prevalence}.

% ------------------
% SUMMARY TABLE
% ------------------
\begin{table}[ht]
\begin{center}
\begin{tabular}{l c c}
\hline
Expected number of reactions & Multistationary? & ACR? \\
\hline
% ------------------
Asymptotically greater than $1$, but less than $n^{2/3}$ & No & Yes \\
% ------------------
Asymptotically greater than $n$, but  less than $\frac{2}{17}n(\log(n)-c(n))$, 
	& Yes & Yes \\
\hspace{.1in}  for some $c(n)\to\infty$ \\
% ------------------
Greater than $n(\log(n)+c(n) )$,   for some $c(n)\to\infty$
 & Yes & No \\
% ------------------
\hline
\end{tabular}
\end{center}
\caption{For random reaction networks, with $n$ species, %obtained under 
generated by 
a certain stochastic block model, this table lists ranges for the expected number of reactions, and whether -- with high probability -- such a network is multistationary or has ACR.  For further details, see Theorem \ref{thm:maintheorem} and Remark \ref{remark:thresholds}.}
\label{tab:summary-prevalence}
\end{table}%
% ------------------

\vspace{-0.4cm}

We see from Table~\ref{tab:summary-prevalence} that the window for having both multistationarity and ACR is relatively small:
when the expected number of edges is asymptotically between $n$ and $\frac{2}{17}n\log(n)$.  In fact, in this window, 
which only appears when there are thousands of species (Remark~\ref{remark:window}), 
the ACR species either appears by itself without interacting with other species (in essence, the network decouples) or it appears as a catalyst (Remark \ref{remark:decouple}). We therefore expect that reaction networks with both multistationarity and ACR, in which the ACR species interacts nontrivially with other species, are rare and may require special structures or constructions.

Our proofs rely crucially on two prior results.  
The first, due to Anderson and Nguyen, pertains to the prevalence of certain networks that are known to preclude multistationarity, namely,
networks with deficiency zero (specifically, the result asserts that ``sparse'' reaction networks are likely to have deficiency zero)~\cite{prevalence_block, prevalence}.
The second result concerns ``lifting'' multistationarity from small networks to larger 
ones~\cite{splitting-banaji, lifting_mss, atoms_multistationarity}, 
which we use to show the high probability of multistationarity and the absence of ACR in ``dense'' reaction networks.

This article is structured as follows: Section~\ref{sec:preliminaries} introduces reaction networks, multistationarity, and ACR. 
Section~\ref{sec:prevalence} contains our results on the prevalence of multistationarity and ACR in random reaction networks via 
edge-probability 
thresholds. In Section \ref{sec:stochastic_block}, we introduce the type-homogeneous stochastic block model, which we use to generate random reaction networks, and compute explicitly the thresholds for multistationarity and ACR.
We end with a discussion in Section~\ref{sec:discussion}.

\section{Background}\label{sec:preliminaries}
%{\color{teal} What's in brown is identical (?) to that of the other paper.  Need to edit!}

Here, we recall the basic setup and definitions involving reaction networks (Section~\ref{sec:rxn-network}), 
the dynamical systems they generate (Section~\ref{sec:mass-action}), and absolute concentration robustness (Section~\ref{sec:ACR-system}). 
(A more detailed exposition can be found in~\cite{feinberg2019foundations}.) 
%For a more detailed exposition, interested readers are referred to \cite{feinberg2019foundations}. 
We also discuss how multistationarity and ACR are %subjected to 
affected when 
adding new reactions to an existing network (Section \ref{sec:monotone}).

\subsection{Reaction networks} \label{sec:rxn-network}

A {\em reaction network} $G$ is a directed graph in which the vertices are non-negative linear combinations of {\em species} $X_1,X_2, \ldots, X_n$.  
As is standard in reaction network theory, we refer to each vertex as a {\em complex}, and we denote the $i$-th complex by 
$y_i= 
	y_{i1}X_1 + y_{i2} X_2 + \dots + y_{in} X_n $ 
or by $y_i=( y_{i1}, y_{i2}, \dots, y_{in})$ 	
%$y_i=\sum_{j=1}^{n}y_{ij}X_j$ 
(where $y_{ij}\in\Z_{\geq 0}$).

Edges of $G$ represent the possible changes in the abundances of the species, and are referred to as {\em reactions}. It is standard to represent a reaction $(y_i, y'_i)$ by the notation $y_i \to y'_i$.  
%{\color{red} [Here we write a reaction as $y_i \to y_j$ and later as $y_i \to y_{i'}$.  Do we want to be consistent?]} {\color{blue} [Fixed.]}
In such a reaction,  $y_i$ is the \textit{reactant complex}, and $y'_i$ is the \textit{product complex}. 
A species $X_k$ is a {\em catalyst-only species} of a reaction $(y_i, y'_i)$ if the stoichiometric coefficient of $X_k$ is the same in the product and reactant (that is, $y_{ik} = y'_{ik}$).
%If, for some species $X_k$, along a reaction $(y_i, y_j)$ the units, $y_{ik}$ and $y_{jk}$, of $X_k$ involved are equal, we call $X_k$ {\em catalyst-only species} in that reaction.
Finally, 
a reaction network $G'$ is a {\em subnetwork} of a network $G$ if the sets of species, complexes, and reactions of $G'$ are subsets of the respective sets of $G$.
%set of species, complexes and reactions of $G$ respectively.

In examples, it is often convenient to write species as $A,B,C,\dots$ rather than $X_1,X_2,X_3,\dots$. 
Additionally, we typically depict a reaction network by its
%a reaction network can be {\color{teal} fully described} by its 
set of reactions,
in which case the sets of 
 %since the sets of 
 species and complexes are implied.
 % in such a case. 

\begin{example} \label{ex:generalized-degenerate-network}
%For instance, 
The reaction network $\{ A+B\to 2B ,~ B \to A\}$ has 2 species (namely, $A$ and $B$), 4~complexes ($A+B,~2B,~B,~A$), and 2 reactions.
\end{example}

A reaction network is \textit{reversible} if every edge in the graph is bidirected.  

\begin{example} \label{ex:reversible-mss-network}
	The reaction network $\{ 
	A \leftrightarrows B+C, ~
	0 \leftrightarrows A,~
	0 \leftrightarrows B,~
	C \leftrightarrows 2C
	\}$
	is reversible.
\end{example}

This article focuses on \textit{at-most-bimolecular} reaction networks (or, for short, {\em bimolecular}), which means that every complex $y_i$ of the network satisfies 
$y_{i1}+y_{i2}+ \dots + y_{in} \leq 2$ (where $n$ is the number of species). 
%$\sum_j^n y_{ij}\leq 2$. 
Equivalently, each complex has the form $0$, $X_i$, $X_i+X_j$, or $2X_i$ (where $X_i$ and $X_j$ are species). The reaction networks in  Examples~\ref{ex:generalized-degenerate-network}--\ref{ex:reversible-mss-network} are bimolecular.

\subsection{Dynamical system arising from a network} \label{sec:mass-action}
Under the assumption of mass-action kinetics, each reaction network $G$ defines a  parametrized family of systems of ordinary differential equations (ODEs), as follows. 
Let $r$ denote the number of reactions of $G$. We write the $i$-th reaction as $y_i \to y_i'$ % (equivalently, $\sum_{j=1}^ny_{ij}X_j\longrightarrow \sum_{j=1}^ny_{ij}'X_j$).  
and assign a positive 
\textit{rate constant} 
 $\kappa_{i}\in \RR_{> 0}$ to the corresponding reaction.

The \textit{mass-action system}, denoted by $(G,\kappa)$, where $\kappa=(\kappa_1,\kappa_2, \dots, \kappa_r)$, 
is the dynamical system arising from the following ODEs:
%is the following dynamical system  that arises from assigning mass-action kinetics to each reaction: 

\begin{equation}\label{eq:mass_action_ODE}
		\frac{dx}{dt} ~=~ \sum_{i=1}^r  \kappa_{i} x^{y_i} (y_i'-y_i) ~=:~ f_{\kappa}(x)~,
\end{equation}
where  $x_i(t)$ denotes the concentration of the $i$-th species at time $t$ and $x^{y_i} := \prod_{j=1}^n x_j^{y_{ij}}$.
The right-hand side of the ODEs~\eqref{eq:mass_action_ODE} consists of polynomials 
$f_{\kappa,i}$, 
for $i=1,2,\dots,n$ (where $n$ is the number of species). For simplicity, we often write $f_i$ instead of $f_{\kappa,i}$.

The \textit{stoichiometric subspace} of $G$, which we denote by $S$, is the linear subspace of $\mathbb{R}^n$ spanned by all reaction vectors $y_i' - y_i$ (for $i=1,2,\dots, r$).  
When $\dim(S)=n$, we say that $G$ is {\em full dimensional}.  
Observe that the vector field of the mass-action ODEs~\eqref{eq:mass_action_ODE} lies in $S$ (more precisely, the vector of ODE right-hand sides is always in $S$).  Hence,  
% Note that the stoichioimetric subspace is a property of $G$ only 
%(with no dependence of $\kappa$).  
a forward-time solution $\{x(t) \mid t \ge 0\}$ of~\eqref{eq:mass_action_ODE}, with initial condition $x(0)  \in \R_{>0}^n$, remains in the following \textit{stoichiometric compatibility class}~\cite{feinberg2019foundations}: 
% Anne added a citation b/c the non-negative part is nontrivial
$$P_{x(0)} ~:=~ (x(0)+S)\cap \RR_{\geq 0}^n~.$$

\begin{example_contd}[\ref{ex:generalized-degenerate-network}] \label{ex:generalized-degenerate-network-ode}
	%For instance, 
The network $\{ A+B \overset{\kappa_1}{\rightarrow} 2B,~ B \overset{\kappa_2}{\rightarrow} A\}$ 
generates the following mass-action ODEs~\eqref{eq:mass_action_ODE}:

	\begin{align*}
	&\frac{dx_1}{dt} ~=~ -\kappa_1x_1x_2+\kappa_2x_2\\
	&\frac{dx_2}{dt} ~=~ \kappa_1x_1x_2 - \kappa_2x_2.
	\end{align*}
Moreover, it has a one-dimensional stoichiometric subspace (spanned by the vector $(1,0)^\top$). 
\end{example_contd}

A {\em steady state} of a mass-action system is a nonnegative concentration vector $x^*\in \R_{\geq 0}^n$ at which the right-hand side of the ODEs \eqref{eq:mass_action_ODE} vanishes: $f_{\kappa}(x^*)=0$. 
Our primary interest in this work is in {\em positive} steady states $x^*\in \R_{> 0}^n$. 
%{\color{violet} \sout{
%The set of all positive steady states of a mass-action system 
%can have positive dimension in $\RR^n$, 
%but with the exception of degenerate cases it  intersects each stoichiometric compatibility class in finitely many points. } [Anne proposes to delete this sentence -- or we need to clarify what we mean by degenerate here.]} {\color{blue} [I am ok with deleting it.]}
Also, a steady state $x^*$ is \textit{nondegenerate} if $\Im(df_{\kappa}(x^*)|_S)=S$, where $df_{\kappa}(x^*)$ is the Jacobian matrix of $f_\kappa$ evaluated at $x^*$, and $S$ is the stoichiometric subspace.

We consider multistationarity at two levels: systems and networks.  
A mass-action system $(G,\kappa)$ is \textit{multistationary} (respectively, {\em nondegenerately multistationary}) 
 if there exists some stoichiometric compatibility class having more than one positive steady state (respectively, nondegenerate positive steady state). 
A reaction network $G$ is \textit{multistationary} if there exist positive rate constants $\kappa$ such that $(G,\kappa)$ is multistationary. 
% (respectively, {\em nondegenerately multistationary}).  
Similarly, a network $G$ can be {\em nondegenerately multistationary}.

\begin{example}[Multistationary system] \label{eg:multimotifrates} 
%\begin{example}[A system with 3 species, 3 positive steady states] \label{eg:multimotifrates} 
Consider the mass-action~system generated by: 
	\begin{align} \label{eq:motif-with-rate-constants}
	& \left\{
	A \stackrel[1]{1}{\leftrightarrows} B+C, \quad 0 \stackrel[6]{1}{\leftrightarrows} A, \quad 0 \stackrel[27]{1}{\leftrightarrows} B, \quad C \stackrel[8]{1}{\leftrightarrows} 2C
		\right\}~.
	\end{align}
It is straightforward to check (or compute) that there are exactly three positive steady states, namely, 
	$(13,20,1),(18,15,2),(21,12,3)$, 
and all three are nondegenerate.
\end{example}

\subsection{Deficiency and absolute concentration robustness} \label{sec:ACR-system}

The {\em deficiency} of a reaction network $G$ is 
$\delta = v- \ell - \dim(S)$, where $v$ is the number of vertices (or complexes) of $G$, $\ell$ is the number of connected components of $G$, and $S$ is the stoichiometric subspace. This invariant is %important for various classifications of dynamical properties of reaction networks based on their graphical structure and 
central to many classical results pertaining to mass-action systems~\eqref{eq:mass_action_ODE}~\cite{AC:non-mass,ACK:product,AN:non-mass,F1,H,H-J1}, including a structural criterion for absolute concentration robustness (ACR)~\cite{ACR}, the topic we turn to next. 

ACR, like multistationarity, is analyzed at the level of systems and also networks.

\begin{definition}[ACR] \label{def:acr}
Let $X_i$ be a species of a reaction network $G$ with $r$ reactions.
\begin{enumerate}
\item For a fixed vector of positive rate constants $\kappa \in \mathbb{R}^r_{>0}$, 
	the mass-action system $(G,\kappa)$ has {\em absolute concentration robustness} (ACR) in $X_i$ if $(G,\kappa)$ has a positive steady state and in every positive steady state $x \in \RR_{> 0}^n$ of the system, the value of $x_i$ (the concentration of $X_i$) is the same. This value of $x_i$ is the \textit{ACR-value} of $X_i$. 

%\item The reaction network $G$ \textit{has the capacity for ACR in $X_i$} (or simply \textit{has ACR in $X_i$}) if there is a $\kappa \in \R^r_{> 0}$ such that $(G,\kappa)$ has ACR in $X_i$. 

\item The reaction network $G$ 
has \textit{unconditional ACR}  in species $X_i$ if the mass-action system $(G,\kappa)$ has ACR in $X_i$ for all $\kappa \in \mathbb{R}^r_{>0}$.
\end{enumerate}
\end{definition}

When $G$ has unconditional ACR in $X_i$, the property of ACR in $X_i$ holds across all rate constants, but the ACR-value can (and typically does) change with rate constants, as in the next example.

% Shinar-Feinberg
\begin{example_contd}[\ref{ex:generalized-degenerate-network}]  We return to the following network: $\{
A +B \xrightarrow{\kappa_1} 2B, ~ B \xrightarrow{\kappa_2}A\}.$
This network is a classical example of a network with ACR \cite{ACR}.  Indeed, at all positive steady states, the concentration of species $A$ is $\kappa_2/\kappa_1$, and hence the network has unconditional ACR in $A$.
%The mass-action system has exactly one positive steady state $(\kappa_2/\kappa_1, T - \kappa_2/\kappa_1)$ IN EACH COMPATIABILTY CLASS, where $T$ is the total concentration of $A$ and $B$. Therefore, the network has unconditional ACR in $A$.
\end{example_contd}

%{\color{red} The ``Example, continued'' environment is nice, but can we add some vertical space before/after them, to match the other examples?}

%\blue{added the space. Feel free to change to more or less where the new theorem style is defined... it is now marked ``space above"}

%{\color{purple} We could put a triangle (instead of/ in addition to space) after Examples and Examples (continued). I already defined a macro for Examples. Re-use for Examples (continued) or remove.}

%{\color{brown} Thank you!  The only thing is: I don't like how we have Example 1, 2, etc. -- can we label like before, to match the section number?}

%{\color{blue} Also why is the continued example indented?}
%One simple way to do Example (continued) is the following (see \LaTeX code)
%\begin{example}[{\bf continued}]
%~
%\end{example}]

%\nidhinote{Indentation, Example numbers wrt Theorem numbers and triangles at the end of continued environment are all fixed now! Please check and if satisfied remove all comments including this.}. THANK YOU!

\begin{remark}[ACR and reversible networks] \label{rem:reversible}
In Definition~\ref{def:acr}, ACR requires the existence of a positive steady state.  This requirement is not included in some definitions of ACR in the literature.  However, in this work, we focus on the reversible networks, which %.  This simplifying assumption 
guarantees the existence of positive steady states (this result is due to Deng {\em et al.}~\cite{Deng} and Boros~\cite{boros2019existence}). Hence, our results are valid with or without the requirement of positive steady states.
\end{remark}

In the literature, reaction networks with ACR are typically not multistationary.
Nonetheless,
a network %$G$ 
can both 
have ACR (in some species) and be multistationary.
%can have ACR in a species $X$ as well as multistationarity. 
A simple example can be constructed by 
joining two networks, with disjoint species sets, where one network has ACR and the other is multistationary.
%taking any reaction network (with species set $\S_Y = \{Y_1, \ldots, Y_m\}$) 
%that is multistationary and then augmenting by some reaction network that has ACR in species $X_1$ and has species set $\S_X = \{X_1, \ldots, X_n\}$ disjoint from $\S_Y$ (that is, $\S_X \cap \S_Y = \varnothing$). 
%
A %slightly 
less trivial example can be generated by having the ACR species participate as an enzyme
 --  more precisely, 
as a catalyst-only species -- 
 in the multistationary network. 
%i.e., 
%$X_1$ has the same stoichiometric coefficient in both the reactant and the product complex of some reaction
%it appears as a catalyst-only species in the first network. 
We illustrate this in the following example.

\begin{example}[A network with multistationarity and ACR] 
%\begin{example}[A network with 2 species, 3 positive steady states, ACR] 
Consider the following network, 
in which $A$ is a catalyst-only species in the first two reactions:
 %, which we call $G$:
\begin{align*}
\left\{
	A \stackrel[\kappa_1]{\kappa_2}{\leftrightarrows} A + B, 
	\quad 
	2B \stackrel[\kappa_3]{\kappa_4}{\leftrightarrows} 3B, 
	\quad A 
	\stackrel[\kappa_5]{\kappa_6}{\leftrightarrows} 2A 
\right\}~.
\end{align*}  
This network, which we call 
$G$, generates the following mass-action ODEs~\eqref{eq:mass_action_ODE}:
\begin{align*}
&\frac{dx_1}{dt} ~=~ \kappa_5x_1 - \kappa_6x_1^2\\
&\frac{dx_2}{dt}~ =~ \kappa_1x_1-\kappa_2x_1x_2+\kappa_3x_2^2-\kappa_4x_2^3.
\end{align*}
One can check directly that $G$ has unconditional ACR in species $A$ with ACR-value $\kappa_5/\kappa_6$.
Moreover, for reaction rates $(\kappa_1, \kappa_2, \ldots,\kappa_6)=(\frac{1}{512},\frac{1}{16},1,1,2,1)$, we obtain exactly $3$ positive steady states, with the following approximate values: 
$( 2, 0.050987),$ $( 2, 0.0890928)$, and $ ( 2, 
	 0.85992)$.
%\[
%( 2, 0.050987), ( 2, 0.0890928), ( 2, 
%	 0.85992)~.
%	 \]
\end{example}
% roots of x^​3-​x^​2+​0.125*​x-​1/​256	 
%\color{blue}{ \sout{
%Furthermore, the steady state values of $x_2$ are the roots of a cubic with 3 sign changes. Thus $(G,\kappa)$ is multistationary only for a proper subset of the set of all vectors of positive rate constants. This subset has positive -- in fact, infinite -- Lebesgue measure.}}
%{\color{red} 
%I wonder if this description is too detailed for this paper.  Can we just assert that it is easy to check that $G$ can have $3$ positive steady states?
%} \nidhinote{The extra sentence in orange should be enough. please check.}

It is not straightforward to find non-trivial examples of reaction networks with multistationarity and unconditional ACR where the network cannot be decomposed into individual pieces, each with only one of the two properties. 
While such networks do exist, this is a topic of study unto its own. We will report on several families of such networks and their operating principles in future work. 
%{\cre (rearranged above and below.)}
%There do exist far more meaningful and interesting networks where ACR and multistationarity coexist in a nontrivial manner. Significantly, the networks do not decompose into individual pieces with each subnetwork having only one of the two properties. 
% Such families of networks will be reported and studied in depth in future work. 

In this work, we are interested in asymptotic results (as the size of the network grows) on the prevalence of multistationarity and ACR. 
An important tool we use for proving thresholds for these properties (or the lack thereof) 
is the network in the following example.
% in Sections \ref{sec:prevalence} and~\ref{sec:stochastic_block}.

% EXAMPLE CONTINUED
\begin{example_contd}[\ref{eg:multimotifrates} ]
Consider again the following mass-action system:
	\begin{align*} %\label{eq:motif-with-rate-constants}
	& \left\{
	A \stackrel[1]{1}{\leftrightarrows} B+C, \quad 0 \stackrel[6]{1}{\leftrightarrows} A, \quad 0 \stackrel[27]{1}{\leftrightarrows} B, \quad C \stackrel[8]{1}{\leftrightarrows} 2C
		\right\}~,
	\end{align*}
which we saw has three positive steady states: $(13,20,1),(18,15,2),(21,12,3)$. 
By inspection, this system has no ACR (in any species) and hence the network does not have unconditional ACR. 
\end{example_contd}

We end this subsection by recalling what is known about multistationarity and ACR for networks with deficiency 0.  In the following result, part (1) follows from the deficiency-zero theorem~\cite{H} and part (2) is immediate from a recent result of Joshi and Craciun~\cite[Theorem 6.1]{foundation_ACR}.

\begin{lemma} \label{lem:def-0}
	If $G$ is a reaction network that has deficiency 0, then:
	\begin{enumerate}
		\item $G$ is not multistationary, and 
		\item if $G$ contains an inflow or outflow reaction (that is, a reaction of the form $0 \to X_i$ or $0 \leftarrow X_i$, for some species $X_i$), then $G$ has unconditional ACR (in some species).
	\end{enumerate}
\end{lemma}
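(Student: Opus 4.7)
The plan is to derive both parts directly from the two external results cited in the statement, so the proof is essentially one of bookkeeping.

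For part (1), I would invoke Horn's deficiency-zero theorem. That theorem provides a dichotomy for a deficiency-zero network $G$: either $G$ fails to be weakly reversible, in which case the associated mass-action system $(G,\kappa)$ admits no positive steady states at all for any $\kappa$; or $G$ is weakly reversible, in which case each stoichiometric compatibility class $P_{x(0)}$ contains precisely one positive steady state (which is in fact complex-balanced). In neither scenario can a compatibility class house two or more positive steady states, so no choice of rate constants makes $(G,\kappa)$ multistationary. Hence $G$ is not multistationary in the network-level sense of Section~\ref{sec:mass-action}. The only check worth flagging is that our notion of multistationarity (at least two positive steady states in a common compatibility class) is precisely what the deficiency-zero theorem rules out in both branches of the dichotomy.

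For part (2), the argument is even shorter: the hypotheses (deficiency zero together with at least one inflow or outflow reaction $0 \to X_i$ or $X_i \to 0$) match exactly those of Joshi and Craciun's~\cite[Theorem 6.1]{foundation_ACR}, whose conclusion yields unconditional ACR in some species. The remaining task is only to verify that the word ``unconditional'' in the cited theorem coincides with Definition~\ref{def:acr}(2), meaning that the ACR property persists for every positive rate-constant vector; this is indeed what the cited result asserts.

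I do not anticipate a genuine obstacle. The one subtlety worth double-checking is that both cited results are formulated for arbitrary reaction networks, whereas the surrounding article restricts to the at-most-bimolecular setting; however, neither the deficiency-zero theorem nor the Joshi--Craciun ACR theorem uses any bound on molecularity, so the restriction plays no role. The lemma is essentially a packaging step that gathers two named theorems in the form in which they will be consumed by the prevalence arguments of Section~\ref{sec:prevalence}.
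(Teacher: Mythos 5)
Your proposal matches the paper's own treatment: the paper gives no separate proof of this lemma and simply notes that part (1) follows from the deficiency-zero theorem and part (2) is immediate from Joshi and Craciun's Theorem 6.1, exactly the two citations you invoke. Your additional remarks (the weakly-reversible dichotomy for part (1) and the irrelevance of the bimolecular restriction) are correct and only make explicit what the paper leaves implicit.
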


\subsection{Monotonicity of multistationarity and non-ACR with respect to adding reactions}\label{sec:monotone}
This subsection pertains to how multistationarity and ACR are affected as we add new reactions to a reaction network. 
The following proposition essentially follows from recent results on ``lifting'' multistationarity from smaller networks to larger 
ones~\cite{splitting-banaji, lifting_mss, atoms_multistationarity}.
% LEMMA
\begin{lemma}[Lifting multistationarity or non-ACR]\label{lem:monotone}
Let $G$ be a full-dimensional network, and 
let $G'$ be a network obtained by adding to $G$ a reaction that involves no new species (new complexes are allowed). 
%\red{Here, by ACR we mean unconditional right? I ask because the argument only generates a reaction rate for which system doesn't have ACR, it doesnt say anything for other rates.}
\begin{itemize}
\item[(i)] If $G$ is nondegenerately multistationary, then so is $G'$.
\item[(ii)] If  {there exists a vector of positive rate constants $\kappa^*$ such that $(G,\kappa^*)$ is nondegenerately multistationary and also does not have ACR (in any species),}
%\sout{ $G$ admits nondegenerate multistationarity and does not have unconditional ACR in any species, }
then the network $G'$ does not have unconditional ACR in any species. 
\end{itemize}
\end{lemma}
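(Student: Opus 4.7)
The plan is to invoke the implicit function theorem (IFT) to lift nondegenerate positive steady states from $(G,\kappa)$ to the larger network $(G',(\kappa,\varepsilon))$, where $\varepsilon>0$ is the rate constant assigned to the newly added reaction. Because $G$ is full-dimensional, $S_G=\mathbb{R}^n$, and since the stoichiometric subspace of $G'$ contains $S_G$, also $S_{G'}=\mathbb{R}^n$. Thus the sole compatibility class for each network is $\mathbb{R}^n_{\geq 0}$, so multistationarity reduces to having distinct positive steady states, and nondegeneracy of a steady state $x^*$ reduces to invertibility of the full Jacobian $df_\kappa(x^*)$. Writing $g(x)$ for the species formation contribution of the new reaction at unit rate constant, the vector field of $(G',(\kappa,\varepsilon))$ is $F(x,\varepsilon)=f_\kappa(x)+\varepsilon\, g(x)$.

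For part (i), let $x^{(1)},x^{(2)}$ be two distinct nondegenerate positive steady states of $(G,\kappa)$. The Jacobian of $F$ in $x$ at $(x^{(j)},0)$ equals $df_\kappa(x^{(j)})$, which is invertible, so the IFT yields smooth branches $\varepsilon\mapsto x^{(j)}(\varepsilon)$ of zeros of $F(\cdot,\varepsilon)$ starting from $x^{(j)}$. These branches remain in $\mathbb{R}^n_{>0}$ and stay nondegenerate for small $\varepsilon\geq 0$, and continuity forces $x^{(1)}(\varepsilon)\neq x^{(2)}(\varepsilon)$ for small $\varepsilon>0$. Hence $(G',(\kappa,\varepsilon))$ is nondegenerately multistationary.

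For part (ii), fix a species $X_i$; it suffices to exhibit rate constants $\kappa'$ for $G'$ under which the system lacks ACR in $X_i$. Using $\kappa'=(\kappa^*,\varepsilon)$, the task reduces to finding two positive steady states of $(G',\kappa')$ with different $X_i$-coordinates. By hypothesis there exist positive steady states $a,b$ of $(G,\kappa^*)$ with $a_i\neq b_i$; the critical step is to upgrade these to \emph{nondegenerate} witnesses, since the IFT lift requires nondegeneracy. The nondegenerate locus is a Zariski-open subset of the real algebraic variety of positive steady states, and the nondegenerate-multistationarity hypothesis ensures this locus is nonempty; combining this with a density/genericity argument (if needed, after a small perturbation of $\kappa^*$ that preserves both hypotheses), we can choose $a,b$ to be nondegenerate. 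Then the IFT argument from part (i) produces perturbed nondegenerate positive steady states $a(\varepsilon), b(\varepsilon)$ of $(G',\kappa')$ with $a(\varepsilon)_i\neq b(\varepsilon)_i$ for small $\varepsilon>0$, so $(G',\kappa')$ has no ACR in $X_i$, proving that $G'$ does not have unconditional ACR in $X_i$.

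The main obstacle is precisely this density/genericity step: ACR is a statement about \emph{all} positive steady states, whereas the IFT-based lifting directly transports only nondegenerate ones. Ensuring that a non-ACR witness pair can be chosen nondegenerate — using the nondegenerate-multistationarity hypothesis together with the semi-algebraic structure of the steady state set — is the technical crux of part (ii), while part (i) is a standard application of IFT enabled by the full-dimensionality of $G$.
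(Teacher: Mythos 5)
Your overall strategy coincides with the paper's: part (i) is exactly the content of the lifting theorem the paper cites (its proof is the implicit-function-theorem argument you give, enabled by full-dimensionality), and part (ii) is proved in the paper by the same perturbation of nondegenerate positive steady states followed by tracking their coordinates as $\varepsilon\to 0^+$.

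The genuine gap is in the step you yourself flag as the crux of part (ii). You need non-ACR witnesses $a,b$ with $a_i\neq b_i$ that are \emph{nondegenerate}, and you propose to obtain them because the nondegenerate locus is a nonempty Zariski-open subset of the positive steady-state set, "combined with a density/genericity argument." That inference does not hold: over $\mathbb{R}$, a nonempty Zariski-open subset of a (possibly reducible) real variety need not be dense, and nothing in the hypotheses rules out that every steady state with $i$-th coordinate different from $a_i$ is degenerate while all the nondegenerate steady states share the same $i$-th coordinate. The fallback of perturbing $\kappa^*$ does not obviously repair this either: a small perturbation preserves the nondegenerate steady states (again by the IFT), but it can destroy the degenerate witnesses outright, so the perturbed system may no longer fail ACR in $X_i$ and the second hypothesis is not automatically preserved. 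The paper sidesteps the issue entirely by taking the non-ACR witnesses to lie among the nondegenerate steady states $q_1,\dots,q_m$ themselves --- i.e., for each species $X_\ell$ there are $q_i,q_j$ in that list with $q_{i,\ell}\neq q_{j,\ell}$ --- which is exactly how the lemma is applied downstream (the multistationary motif's three positive steady states are all nondegenerate and pairwise differ in every coordinate). Under that reading your IFT lift goes through verbatim; under your literal reading of the hypothesis, the upgrading step remains unproven, so part (ii) is incomplete as written.
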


\begin{proof}
Part $(i)$ follows directly from \cite[Theorem 3.1]{atoms_multistationarity}. 

For part $(ii)$, suppose that there exists $\kappa^*$ such that $(G,\kappa^*)$ does not have ACR (in any species) and also has nondegenerate, positive steady states $q_1,q_2,\dots,q_m$, where $m \ge 2$.
We denote each steady state by $q_i=(q_{i,1},\ldots, q_{i,n})\in \RR^n_{\ge 0}$ (for $i=1,\ldots, m$), where $n$ is the number of species of $G$.

Let $\epsilon$ denote the rate constant of the reaction added to $G$ to obtain $G'$.
From the proof of \cite[Theorem 3.1]{atoms_multistationarity}, there exists $\epsilon_0>0$ such that if $0 < \epsilon < \epsilon_0$, then 
	$(G', (\kappa; \epsilon))$ 
%if $\epsilon$ is the rate constant of the reaction added to $G$ to obtain $G'$, then $G'$
has nondegenerate, positive steady states $q_{1}(\epsilon),q_{2}(\epsilon),\ldots,q_{m}(\epsilon)$ such that $\lim_{\epsilon\to 0^+}q_{i}(\epsilon) = q_i$ (for all $i$). 
%% \cite[Chapter 9]{Rahman:Schmeisser:AnalyticTheoryOfPolynomials}).  
%    \sout{Hence, for $\epsilon$ sufficiently small, the $q_{i}(\epsilon)$'s are distinct, and so $(G', (\kappa^*; \epsilon))$
%    has $m$ nondegenerate positive steady states. }
%    [Anne proposes to delete this, since we only need to prove non-ACR in part (ii) of the proof.]}

Next, consider some species $X_\ell$ (so, $1 \leq \ell \leq n$).  As
$(G,\kappa^*)$ does not have ACR in $X_\ell$, 
there exist 
steady states $q_i$ and $q_j$ at which the corresponding concentrations of $X_\ell$ differ (that is, $q_{i,\ell} \neq q_{j,\ell}$). 
So, as
$\lim_{\epsilon\to 0^+}q_{i}(\epsilon) = q_i$ and 
$\lim_{\epsilon\to 0^+}q_{j}(\epsilon) = q_j$,
%Now, by continuity of polynomials, 
there exists $\epsilon_\ell>0$ (with $\epsilon_\ell < \epsilon_0$) such that if $0<\epsilon<\epsilon_\ell$, then $|q_{i,\ell}(\epsilon)-q_{j,\ell}(\epsilon)|>0$
and hence $(G', (\kappa^*; \epsilon))$ does not have ACR in $X_\ell$.  

Finally, we pick $\epsilon$ such that $0<\epsilon < \min_\ell \epsilon_\ell$. 
%It is straightforward to check that for this choice of $\epsilon$, 
By construction, the system $(G', (\kappa^*; \epsilon))$ does not have ACR in any species. Hence, $G'$ does not have unconditional ACR.
\end{proof}

\begin{remark}\label{rem:monotone}
 Lemma \ref{lem:monotone} implies that, given full dimensionality, nondegenerate multistationarity is a \textit{monotone increasing} property (with respect to adding new edges/reactions).  
% {\color{red} \sout{
%On the other hand, 
%the proposition states that, {\color{violet} under some hypotheses},
%ACR is a \textit{monotone decreasing} property (equivalently, absence of ACR is a monotone increasing property).  } [This doesn't seem very precise, given that we have updated part (ii) of the proposition. Delete?]} {\color{blue} [Yes I am ok with deleting it.]}
\end{remark}

\section{Multistationarity and ACR in random reaction networks}\label{sec:prevalence}
In this section, we follow the approach in \cite{prevalence_block, prevalence} in which reaction networks are generated using a random-graph framework (Section~\ref{sec:random-setup}). 
In 
Section~\ref{sec:randomgeneral}, 
we prove the existence of thresholds for the presence or absence of 
nondegenerate multistationarity and unconditional ACR.
These thresholds are with respect to increasing {\em graph density}, that is, the fraction of reactions present  -- among all possible reactions.  
%{\color{violet} \sout{
%In Section~\ref{sec:stochastic_block}, we explicitly compute these thresholds for a specific random-graph model: a stochastic block model in which the edge probablities allow all ``types" of reactions and complexes to be equally represented. }}

In this section, we use the following standard notation.
For sequences of numbers $\{a_n\}$ and $\{b_n\}$, we write $a_n \ll b_n$ (or $b_n\gg a_n$) if
\[
\lim_{n\to \infty} \frac{a_n}{b_n} = 0~;
\]
and we write $a_n\sim b_n$ if 
\[
\lim_{n\to \infty} \frac{a_n}{b_n} = c~,
\]
for some positive constant $c$.
Also, a sequence of events $\{A_n\}$ occurs \textit{with high probability (w.h.p.)} if $\lim_{n\to\infty}\P(A_n)=1$.

\subsection{Random reaction networks}\label{sec:random-setup}

Consider the class of bimolecular reaction networks on  $n$ species $X_1,X_2,\dots,X_n$.
The set of all possible complexes is then
\begin{align}\label{eqn:vertices}
V_n ~=~ 
\{0 \}
~\cup~
\{X_i \mid 1 \le i \le n \}  ~\cup~ 
\{2X_i \mid 1 \le i \le n \}
~\cup~
\{X_i+X_j \mid1 \le i ,j \le n ~,~ i \neq j\}. % {\rm ~and~} 
\end{align}
The cardinality %, $N_n$, 
of $V_n$ is therefore given by
\[
%N_n 
|V_n|
	~=~ 1+n+n+ {n \choose 2} 
	%\frac{n(n-1)}{2}
	~=~ \frac{n^2+3n+2}{2}~.
\]

\begin{definition}[Edge probabilities] \label{def:edgeprob} 
	Let $n$ be a positive integer.
	\begin{enumerate}
	\item 
	Consider two distinct vertices $u,v \in V_n$.  An {\em edge probability function} for the unordered pair $e=(u,v)$ is a non-decreasing function, $\phi_{e}(p_n)$, in a single parameter $p_n \in[0,1]$. 
	\item 
	A {\em choice of edge probabilities} is a collection of edge probability functions, 
$\phi_{e}(p_n)$, 
one for each unordered pair $e=(u,v)$ of vertices in $V_n$.
	\end{enumerate}
\end{definition}

% The probability of an edge between given two vertices is encoded by an \textit{edge function} $\phi(u,v)$. 		
%In the remainder of the text we consider specific class of edge functions where the probabilities between any two vertices are dependent on exactly one parameter $p_n\in [0,1].$ We restrict our focus to functions such that, for fixed $u,v \in V_n$, the probability that there is an edge between any them is a monotonically increasing function, $\phi_{\{u,v\}}(p_n)$, with respect to the parameter $p_n \in[0,1]$. For convenience, we call such functions {\em edge probabilities}.

%To generate random networks with these species, we first g
\begin{definition}[Random graph $G(V_n,p_n)$] \label{def:rdm-graph-general} 
Fix a positive integer $n$, some $p_n \in [0,1]$, and a choice of edge probabilities $\{ \phi_{e}(p_n) \}$. 
We generate random (undirected) graphs, which we denote by $G(V_n,p_n)$, as follows: 
\begin{itemize}
	\item the vertex set is $V_n$, and 
	\item the probability that there is an edge between two vertices $u,v \in V_n$ is given by the corresponding edge probability function (where $e=(u,v)$):
		\[
		\P(e \text{ is an edge of } G(V_n,p_n))
		~=~
		\phi_{e}(p_n).
		\]
\end{itemize}
\end{definition}
%For convenience, we call such functions {\em edge probabilities}. For a given $n$, 
%We denote such a random graph by $G(V_n,p_n).$ 

%{\color{violet}
%Nowhere in the entire paper afterwards do we say anything about the function $\phi_{\{u,v\}}$. In fact we use vague choice of words like `` specific choice of edge probabilities" where in fact I think we mean $\phi_{\{u,v\}}$. Either remove this function here or point this out later... for eg in the introduction of section 4. It is worth saying a specific choice of $\phi$ gives block diagonal and we work with that choice. Our language is unclear at bunch of such points.}. {\color{brown} I see the concern!  Each function is an edge probability (function), right?  So, that's why we say a ``choice of edge probabilities''.  
%(Maybe we can add here, ``A {\em choice of edge probabilities} is a collection of edge probabilities $\phi_{\{u,v\}}(p_n)$, one for each pair of vertices $u,v$''.)
%I think the function notation is important here, to make it clear that the function can depend on which 2 vertices are involved.  To discuss! (Anne)}

\begin{definition}[Random network $G_n$] \label{def:rdm-network-general} 
Each random graph $G(V_n, p_n)$ (generated by some choice of edge probabilities) defines a random reaction network, which we denote by $G_n$, consisting of reversible reactions, as follows:
\begin{itemize}
\item The set of species of $G_n$ is $\{X_1,X_2,\dots ,X_n\}$.
%\item {\color{brown} \sout{The complexes of $G_n$ are the non-isolated vertices of $G(V_n, p_n)$.}}
\item The (reversible) reactions of $G_n$ correspond to the edges of $G(V_n, p_n)$.
\end{itemize}
\end{definition}
Recall from Section \ref{sec:rxn-network} that $G_n$ is full-dimensional if its dimension is $n$.

%{\color{violet} We didn't define degree for graphs anywhere. [NK]}

\begin{remark} \label{rem:missing-species}
It is possible that some species of $G_n$ appears in no complexes, 
%may not appear in any non-isolated vertices, 
especially when $G(V_n,p_n)$ is sparse (e.g., the network shown later in Figure~\ref{fig1}).
% Example \ref{example:sparse}). 
Such $G_n$ are not full-dimensional. % and these species' concentrations stay constant.
\end{remark}

\subsection{Thresholds for multistationarity and ACR}\label{sec:randomgeneral}

In this subsection, we 
show that for the random reaction networks defined in the prior subsection, 
there exist thresholds for the presence or absence of 
nondegenerate multistationarity and unconditional ACR (Theorem~\ref{theorem:threshold}).  Subsequently, we discuss the challenges of computing such thresholds, and then describe a strategy for proving upper bounds on the thresholds (see Corollary~\ref{cor:thresholds-equal}).

The following definition is useful in the proof of Theorem~\ref{theorem:threshold} and also later in Proposition~\ref{prop:MM+LC}.

\begin{definition}[$S^*_n$]\label{def:S*}
For $n\in\Z_{\geq 1}$, 
let $S^*_n$ denote the set of all full-dimensional bimolecular networks $G$ with exactly $n$ species % {\color{red} [Tung: I think it should be exactly n here.]}
for which there exists a vector of rate constants $\kappa$ such that $(G,\kappa)$ is nondegenerately multistationary and also does not have ACR in any species. 
\end{definition}

\begin{remark}[$S^*_n$ is nonempty for $n\geq 2$] 
The set $S^*_1$ is empty~\cite{joshi2017small}, but for all $n\geq 2$, $S_n^*$ is nonempty.  This is shown for $n\geq 3$ in Proposition~\ref{prop:MM+LC}, and
$S^*_2$ contains the following network: 
% We point out that the set $S^*_n$ is non-empty for all $n$. For $n\geq 3$, we provide a subset of $S_n^*$ in . Bimolecular networks in one species do not have nondegenerate multistationarity {\color{orange} CITE?? THE OTHER PAPER??}. The following network belongs to $S^*_2$:
 \begin{align*}
      &
      \left\{
      A+B \stackrel[1/4]{1/32}{\leftrightarrows} 2A, \quad 2B \stackrel[1/4]{1}{\leftrightarrows} A, \quad \emptyset \stackrel[1]{1}{\leftrightarrows} B \right\} ~.
 \end{align*}
Indeed, the indicated rate constants generate a system with $3$ nondegenerate positive steady states  -- with approximate values 
$(0.419694, 1.11107)$, 
$(2.65005, 2.3128)$, and
$(216.681, 27.5757)$  
 -- 
and so this system is nondegenerately multistationarity and also does not have ACR. 
\end{remark}

\begin{theorem}[Thresholds for full-dimensionality, multistationarity, and non-ACR] \label{theorem:threshold}
Consider the setup for generating random reaction networks $G_n$, described in Section~\ref{sec:random-setup}, for some choice of edge probabilities.  Then 
there exist threshold functions (``thresholds'', for short) $0<r_0(n)\leq r_1(n)\leq r_2(n)$, such that for any $\{ p_n \}_{n \geq 1}$:
\begin{itemize}
\item[(0)] If $p_n\gg r_0(n)$, then 
	$G_n$ is full-dimensional w.h.p. %when $p_n\gg r_0(n)$. 
\item[(1)] If $p_n\gg r_1(n)$, then 
	$G_n$ is full-dimensional and nondegenerately multistationary w.h.p. 
\item[(2)] If  $p_n\gg r_2(n)$, then 
	$G_n$ is full-dimensional, is nondegenerately multistationary, and does {\bf not} have unconditional ACR (in any species) w.h.p.
\end{itemize}
\end{theorem}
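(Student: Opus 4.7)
The plan is to recognize that each of the three properties, suitably strengthened, defines a monotone increasing event under adding reactions, and then apply a standard threshold argument. For parts (0) and (1), take $A_0$ to be full-dimensionality and $A_1$ to be full-dimensionality together with nondegenerate multistationarity. For part (2), I would work with the stronger event $A_2 := \{G_n \in S_n^*\}$ from Definition~\ref{def:S*}, which implies all three conditions in (2) and, crucially, packages the common witness $\kappa^*$ required by Lemma~\ref{lem:monotone}(ii).

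The monotonicity of each $A_i$ under adding a single reaction (involving no new species) is then straightforward: $A_0$ holds because the stoichiometric subspace can only grow; $A_1$ follows from $A_0$-monotonicity plus Lemma~\ref{lem:monotone}(i); and $A_2$ follows from $A_0$-monotonicity plus Lemma~\ref{lem:monotone}(ii), observing that the proof of that lemma actually produces a new common witness $(\kappa^*, \epsilon)$ for the enlarged network, so $G' \in S_n^*$. In the random-graph setup of Section~\ref{sec:random-setup}, every candidate edge in $\binom{V_n}{2}$ uses only species from $\{X_1, \ldots, X_n\}$, so the ``no new species'' hypothesis of Lemma~\ref{lem:monotone} is automatic; monotonicity then extends from single-edge additions to arbitrary inclusions $G \subseteq G'$ by induction.

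For the thresholds, I would couple $\{G(V_n, p)\}_{p \in [0,1]}$ via i.i.d.\ uniform variables $\{U_e\}_{e \in \binom{V_n}{2}}$, including $e$ precisely when $U_e \leq \phi_e(p)$; since each $\phi_e$ is non-decreasing, $p \leq p'$ forces $G(V_n, p) \subseteq G(V_n, p')$. Combined with monotonicity of $A_i$, the function $f_i(n, p) := \P(G(V_n, p) \in A_i)$ is non-decreasing in $p$, and the inclusions $A_2 \subseteq A_1 \subseteq A_0$ give $f_2 \leq f_1 \leq f_0$. Define
\[
r_i(n) \; := \; \inf \{ p \in (0,1] : f_i(n, p) \geq 1 - 1/n \},
\]
with the convention $\inf \emptyset := 1$. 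Monotonicity of $f_i$ makes this sublevel set an upper subset of $(0, 1]$, so every $p_n > r_i(n)$ satisfies $f_i(n, p_n) \geq 1 - 1/n$. Positivity $r_i(n) > 0$ holds under the mild assumption $\phi_e(0) = 0$ (or after redefining $r_i(n) := \max(2^{-n}, \ldots)$, which preserves orderings), the ordering $r_0 \leq r_1 \leq r_2$ is inherited from $f_0 \geq f_1 \geq f_2$, and $p_n \gg r_i(n)$ forces $p_n > r_i(n)$ eventually, hence $f_i(n, p_n) \geq 1 - 1/n \to 1$, the required w.h.p.\ statement.

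The main conceptual obstacle is the monotonicity step for the third event: ``$G$ does not have unconditional ACR'' is not, on its own, obviously preserved when a reaction is added, since a new reaction can in principle force some species concentration to be determined at all positive steady states of the enlarged system. Replacing the event by $\{G \in S_n^*\}$ -- which bundles nondegenerate multistationarity together with a common rate-constant witness for non-ACR -- sidesteps this, and Lemma~\ref{lem:monotone}(ii) does the heavy lifting. Once the correct monotone events are identified, the coupling, threshold existence, and the ordering among thresholds are all routine.
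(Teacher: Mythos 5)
Your proposal is correct and takes essentially the same route as the paper: identify monotone increasing events (full-dimensionality; full-dimensionality plus nondegenerate multistationarity; and membership in, equivalently containment of a subnetwork from, $S_n^*$) and apply threshold theory for monotone properties, with Lemma~\ref{lem:monotone} supplying the monotonicity and the passage to $S_n^*$ handling the fact that ``no unconditional ACR'' is not monotone on its own. The only difference is that you unpack the coupling and the explicit construction of the threshold functions where the paper simply cites the standard result on thresholds for monotone properties.
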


\begin{proof}
Being a full-dimensional network is a monotone increasing property with respect to adding reactions (with no new species). This fact, combined with a well-known result from 
 the theory of threshold functions~\cite{threshold}, proves part~(0).
 
From Remark~\ref{rem:monotone}, 
the property (for full-dimensional networks) of being nondegenerately multistationary is monotonically increasing with respect to adding reactions (with no new species). 
Exploiting again the theory of threshold functions~\cite{threshold}, we obtain part~(1).
 
Finally, Lemma~\ref{lem:monotone}(ii) and the theory of threshold functions together imply that there exists a threshold function $r_2(n)$  for $G_n$ to contain a subnetwork $H \in S^*_n$.  This implies part~(2).
%let $r_2(n)$ be the threshold for $G_n$ to contain a subnetwork in $S^*_n$, then from  
% we obtain the proof of part~(2).
 %the remaining two parts.
\end{proof}

%{\color{violet} Above proof reads like having ACR is also monotonically increasing wrt more edges. But that's false, right? It took me a while to realise we mean both together. Rephrase, maybe?}
%
%{\color{blue} Added a word.}

%\badalnote{either rephrase (2) or say that the threshold for full-dim and no ACR is the same as the threshold for full dim, nondeg mult, and no ACR.}

Theorem \ref{theorem:threshold} 
implies that when a random network is sufficiently dense, it is multistationary w.h.p. (after a threshold $r_1(n)$) and also lacks unconditional ACR (after a threshold $r_2(n)$).  However, computing these thresholds is generally difficult, 
because it is challenging to determine whether a large reaction network is multistationary and whether it precludes ACR.  In fact, while there are sufficient conditions for ACR, such as the Shinar-Feinberg criterion~\cite{ACR}, 
there are no easy-to-check necessary conditions for ACR (for general networks)~\cite[Section~2]{MST}.

Nevertheless, there is a fruitful strategy for establishing upper bounds on the thresholds $r_1(n)$ and $r_2(n)$, which we describe in detail in the remainder of this subsection.  
%{\color{red} The idea of the next sentence already appeared in Prop 2.7, so rephrase?} 
The underlying idea comes 
from the fact (stated earlier in Lemma~\ref{lem:monotone}) 
%Recall from PROP 2.7 (CITE)
that multistationarity can sometimes be \textit{lifted} from a small subnetwork to the whole network. 
Therefore, in lieu of determining when multistationarity of the entire network emerges (as edge probabilities increase), we instead investigate when a small multistationary subnetwork  emerges. 
The choice of edge probabilities dictates which such subnetworks emerge first.
%{\color{brown} \sout{
%It depends on the choice of edge probabilities which of the %small multistationary 
%subnetworks appears first.}} %\sout{\red{way we generate random reaction networks (i.e. the choice of edge probabilities). }}
For the  edge probabilities we consider in the next section, we focus on a particular multistationary subnetwork, as follows. % defined next.

\begin{definition}[Sets $S_{M,n}$ of multistationary motifs] \label{M1}
For $n \in \mathbb{Z}_{>0}$, let $S_{M,n}$ denote the set of all networks of the following form:
\begin{align} \label{eq:multi-motif}
    &
    \{ X_i \leftrightarrows X_j + X_k, \quad 0 \leftrightarrows X_i, \quad 0 \leftrightarrows X_j, \quad X_k \leftrightarrows 2X_k  \} ~,
\end{align}
where $i,j,k$ are distinct indices with $1 \leq i,j,k \leq n$.  
%Consider the network $M$ with 3 species $X_i,X_j,X_k$
Each network~\eqref{eq:multi-motif} is a {\em multistationary motif}.
\end{definition}

\begin{remark} \label{rem:motif-has-mss}
Recall from Example~\ref{eg:multimotifrates} and Example ~\ref{eg:multimotifrates} (continued) that each multistationary motif~\eqref{eq:multi-motif} is 
full-dimensional (3-dimensional) and
nondegenerately multistationary, and does not have unconditional ACR (in any species).  
%{\color{red} Careful: the ``no ACR'' statement is in the continuation of that example.  Adjust?}
\end{remark}
%We denote by~$S_{M, n}$ the set of all networks that are equivalent to the above network, up to species labels \textcolor{blue}{among $\{X_1,\dots,X_n\}$} and we refer to elements of $S_{M,n}$ as a {\em multistationary motif}. 

%Recall that each multistationary motif~\eqref{eq:multi-motif} is multistationary and lacks ACR. 
Our next aim is to show (in Proposition~\ref{prop:MM+LC} below) how to join a multistationary motif to another network (which we call a ``lifting component'') so that the resulting network again is 
full-dimensional and
nondegenerately multistationary, and lacks unconditional ACR.
%multistationary and lacks ACR, but also is full-dimensional.
% is full-dimensional, has nondegenerate multistationarity, and still does not exhibit ACR. 
%Denote the threshold of such a lifting component by $p_2'$, then $\max\{p_1',p_2'\}$ gives the desired upper bound. Hence, for $p\gg \max\{p_1',p_2'\}$, a random network admits nondegenerate multistationarity and has no ACR w.h.p.

%\textcolor{blue}{While part $(1)$ of Theorem \ref{theorem:threshold} deals with the emergence of multistationarity, part $(2)$  concerns the coexistence of multistationarity and ACR. Specifically, part $(2)$} implies that when a random network is sufficiently dense, multistationarity and ACR are mutually exclusive w.h.p. While there are sufficient conditions like Shinar-Feinberg criterion (\cite{ACR}) that ensures ACR in a network, conditions which completely rule out ACR for large networks are not known. Hence, finding the threshold $r_2(n)$ is also a challenging task. \textcolor{blue}{We will discuss an approach that can produce an upper bound for $r_2(n)$}.  

\begin{definition}[Sets $S_{L,k}$ of  lifting components]
For $k \in \mathbb{Z}_{>0}$, let $S_{L,k}$ be the set of all reversible reaction networks  for which the associated graph is a tree on $k$ vertices (that is, complexes), where each of the vertices has the form $X_i$ (for $i \in \mathbb{Z}_{>0}$). 
%that have exactly $k$ distinct complexes of the form $X_i$, and whose associated graph is a spanning tree. 
Every network in $S_{L,k}$ is a {\em lifting component}.
\end{definition}

\begin{example} An example of a network (lifting component) in 
$S_{L,3}$ is $
\{ X_2\leftrightarrows X_3 \leftrightarrows X_4 \}
$.
%
%{\color{violet} \sout{
%		$S_{L,k}$ is $
%		\{ X_1\leftrightarrows X_2 \leftrightarrows \dots \leftrightarrows X_k \}
%		$.}}
\end{example}

%\sout{
%It is straightforward to check that each lifting component in $S_{L,k}$ has dimension $k-1$.  On the other hand, each multistationary motif~\eqref{eq:multi-motif} has dimension $3$.  We can therefore build a full-dimensional ($n$-dimensional) network by joining, in a certain way, a multistationary motif to a lifting component with $n-2$ species (that is, in $S_{L,n-2}$).  We describe a set of such networks next.}
Next, we describe a set of networks obtained by joining a multistationary motif~\eqref{eq:multi-motif}, which has dimension $3$, to a lifting component of dimension $n-3$, so that the result is full-dimensional.

\begin{definition}[Sets $S_{J,n}$ of joined multistationary motifs and lifting components] \label{def:joined-sets}
For $n \geq 3$, let $S_{J,n}$ be the set of all reaction networks whose reactions can be written as the union of a network $G_M \in S_{M,n}$ and a network $G_L\in S_{L,n-2}$, such that $G_M$ and $G_L$ have exactly one 
species in common. 
\end{definition}

An example of such a joined network is shown later (see Figure \ref{fig3} and Example \ref{example:dense}).

%\textcolor{blue}{The next theorem specifies how we add a lifting component to a multistationary motif to obtain a full-dimensional network with multistationarity and no ACR species. Since a multistationary motif in $S_M$ has dimension 3, to get full-dimensionality, the smallest lifting component we can add must have $n-2$ species, one of which is a species in the multistationary motif.}

\begin{proposition}[Properties of  $S_{J,n}$] \label{prop:MM+LC}
 For $ n \geq 3$, the following hold: 
 \begin{enumerate}
 	\item $S_{J,n} \subseteq S^*_n$. Consequently, every network $H \in S_{J,n}$ is full-dimensional and nondegenerately multistationary, and does not have unconditional ACR (in any species). 
 	\item If some $H \in S_{J,n}$ is a subnetwork of a network $G$ with $n$ species, then $G$ is full-dimensional and nondegenerately multistationary, and does not have unconditional ACR (in any species).
\end{enumerate}
%Let $G_M$ and $G_L$ be reaction networks in $S_{M,n}$ and $S_{L,n-2}$, respectively, such that $G_M$ and $G_L$ have exactly one 
%species in common. 
%Let $G$ be the reaction network whose set of reactions is the union of the reactions of $G_M$ and $G_L$. 
%Then $G$ 
% Every network $G \in S_{J,n}$ 
% is full-dimensional and nondegenerately multistationary, and has no unconditional ACR (in any species).
\end{proposition}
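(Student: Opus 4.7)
My plan is to establish part (1) by directly constructing nondegenerate multistationary steady states of $H$ from the three steady states of the motif $G_M$, and then derive part (2) by iterating the monotonicity result Lemma~\ref{lem:monotone} along the reactions of $G\setminus H$.

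For part (1), write $H=G_M\cup G_L$ with $G_M$ the multistationary motif on three species (WLOG $\{X_1,X_2,X_3\}$) and $G_L$ a reversible tree on $\{X_3,X_4,\dots,X_n\}$, meeting $G_M$ only at $X_3$. Full-dimensionality follows by noting that the motif's four reactions put $e_1,e_2,e_3$ into the stoichiometric subspace, while each tree edge contributes a vector $e_a-e_b$ with $a,b\in\{3,\dots,n\}$; connectedness of the tree together with $e_3$ already being present forces $e_4,\dots,e_n$ into $S_H$, so $\dim(S_H)=n$. Equip $G_M$ with the rate constants of Example~\ref{eg:multimotifrates}, producing the three positive steady states $(13,20,1)$, $(18,15,2)$, $(21,12,3)$ with pairwise distinct entries in every coordinate, and assign arbitrary positive rate constants to the reactions of $G_L$. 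For each motif steady state $q$, define an extension $\tilde q\in\R^n_{>0}$ by cascading along the tree: $\tilde q_\ell = \rho_\ell\, q_3$, where $\rho_\ell$ is the product of forward/reverse rate-constant ratios along the unique $X_3$-to-$X_\ell$ path. A direct substitution shows each $\tilde q$ is a steady state of the mass-action ODEs for $H$, and the cascaded values remain pairwise distinct in every coordinate, ruling out ACR.

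For nondegeneracy I would grow $G_L$ outward from $X_3$ one leaf at a time and verify the following tree-growth identity: appending a reversible monomolecular reaction $X_a\leftrightarrows X_{\text{new}}$ with rate constants $\kappa_+,\kappa_-$ to a full-dimensional network $G$ produces a network $G'$ whose full Jacobian at the extended steady state satisfies
\[
\det(J_{G'}) \;=\; -\kappa_-\,\det(J_G).
\]
This follows from a Schur-complement computation: the bottom-right scalar of $J_{G'}$ is $-\kappa_-$, the top-left block is $J_G-\kappa_+e_ae_a^{\top}$, and the Schur-complement correction contributes exactly $+\kappa_+e_ae_a^{\top}$, cancelling the perturbation. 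Iterating over the tree edges lifts nondegeneracy all the way to $H$, so $H\in S^*_n$.

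For part (2), since $H\in S_{J,n}$ is already full-dimensional on $n$ species by part (1), every reaction of $G\setminus H$ involves no new species. Starting from a system $(H,\kappa^*)$ guaranteed by part (1), I would add the reactions of $G\setminus H$ one at a time and apply Lemma~\ref{lem:monotone}: parts (i) and (ii) together produce, at each step, perturbed rate constants that simultaneously preserve nondegenerate multistationarity and the absence of ACR (full-dimensionality is trivially preserved). After processing all reactions, $G$ inherits the three required properties. The hardest step will be the nondegeneracy calculation in part (1), because Lemma~\ref{lem:monotone} as stated covers only the addition of reactions with no new species, whereas growing $G_L$ inherently introduces new species; the Jacobian determinant identity above, packaged as a small auxiliary lemma, is what bridges this gap.
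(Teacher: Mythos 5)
Your proposal is correct, and it reaches the conclusion by a genuinely different route at the one step that requires real work, namely nondegeneracy. The paper fixes all tree rate constants equal to $1$, so the joined steady states are $(13,20,1,1,\dots,1)$ etc., and then proves nonsingularity of the Jacobian in one shot: using the mass-conservation relation $g_3+\dots+g_n=0$ of the lifting component, it adds rows $4,\dots,n$ to row $3$ to obtain a block-triangular matrix $\left[\begin{smallmatrix} A & 0\\ B & C\end{smallmatrix}\right]$ with $A$ the motif Jacobian and $C$ a nonsingular matrix coming from the tree. You instead allow arbitrary positive tree rate constants, build the steady states by the detailed-balance cascade $\tilde q_\ell=\rho_\ell q_{\mathrm{root}}$, and prove nondegeneracy by induction on the tree, one leaf at a time, via the Schur-complement identity $\det(J_{G'})=-\kappa_-\det(J_G)$; I checked this identity and it is correct (the rank-one correction $+\kappa_+e_ae_a^{\top}$ exactly cancels the perturbation of the top-left block). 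Your route buys two things: it works uniformly no matter which of the three motif species is the attachment point (the paper must note separately that the cases $X_1,X_2$ are ``similar'' to $X_3$), and it packages the new-species extension as a reusable lemma, which is needed because Lemma~\ref{lem:monotone} only covers reactions with no new species. The paper's route is shorter because it avoids the induction. For part (2) both arguments are the same iteration of Lemma~\ref{lem:monotone}; you are right that the iteration needs the stronger invariant that each intermediate network stays in $S^*_n$, which the proof (rather than the bare statement) of Lemma~\ref{lem:monotone}(ii) supplies, and you flag this correctly. The only cosmetic caveat is your ``WLOG the tree meets $G_M$ at $X_3$'': the motif is not symmetric in its species, so this is not literally without loss of generality, but since your argument uses nothing about the attachment point beyond the pairwise distinctness of its steady-state values (which holds in every coordinate), the gap is harmless.
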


\begin{proof} 
We first prove part (1). Let $H \in S_{J,n}$.  By definition, there exist $G_M \in S_{M,n}$ and  $G_L \in S_{L,n-2}$, with exactly one 
species in common, such that the reactions of $H$ are a union of those in $G_M$ and $G_L$.  
Relabel the species of $H$, if needed, so that $G_M$ is the following network:
%Without loss of generality, suppose that $G_M$ contains the species $X_1,X_2,X_3$
\[
\{ X_1 \leftrightarrows X_2 + X_3, \quad 0 \leftrightarrows X_1, \quad 0 \leftrightarrows X_2, \quad X_3 \leftrightarrows 2X_3 \}~,
\]
and also that the species of $G_L$ are 
$X_{\ell}, X_4, X_5, \dots,X_n$, for some $\ell \in\{1,2,3\}$.  

It is straightforward to check that $H$ is full-dimensional (that is, has dimension $n$). 
Thus, to show that $H\in S_n^*$, it suffices to show that there exists a vector of rate constants $\kappa$ such that $(H,\kappa)$ is nondegenerately multistationary and does not have ACR in any species.  Accordingly, we define $\kappa$ as follows.
First, we choose the rate constants for reactions in $G_M$ 
as in~\eqref{eq:motif-with-rate-constants} in Example~\ref{eg:multimotifrates}, so that $G_M$ has three nondegenerate, positive steady states: 
	$(13,20,1),(18,15,2),(21,12,3)$.
%such that $G_M$ has 3 positive steady states $(x_{1j},x_{2j},x_{3j})$ for $j=1,2,3$ while having no ACR in $X_1,X_2,X_3$.  
Next, fix all rate constants for reactions in $G_L$ to be~$1$. 
Using the fact that $G_L$ is a spanning tree,
a simple computation shows 
%it is straightforward to check 
that the positive steady states are 
{
$(x_\ell, x_4,  x_5, \dots, x_n) = (c,c,c,\dots, c)$,} 
where $c$ is any positive real number, and these steady states are all nondegenerate.

{
We consider first the case when the common species is $X_{\ell}= X_3$.
}
We claim that, {in this case,} the following are nondegenerate steady states of $(H,\kappa)$:
	\begin{align} \label{eq:steady-states-joined-network}
	(13,20,1, 1, \dots, 1),\quad (18,15,2, 2, \dots, 2), \quad (21,12,3, 3, \dots, 3)~.
 	\end{align}
To see this, 
let $\dot{x_i}=f_i$ for $i=1,2,3$, and 
$\dot{x_i}=g_i$ for $i=3,4,\dots,n$, denote the ODEs for $G_M$ and $G_L$, respectively, with rate constants as defined above.  Hence, 
	$(13,20,1),(18,15,2),(21,12,3)$ satisfy $f_1=f_2=f_3=0$, and $(1,1 \dots, 1),(2, 2, \dots, 2),  ( 3, 3, \dots, 3)$ satisfy $g_3=g_4=\dots = g_n=0$.  
	Next, the ODEs of $(H, \kappa)$ are as follows:
	\begin{align*}
	 \dot{x_i} ~&=~ f_i \quad \quad  \quad {\rm for }~  i=1,2 \\
	  \dot{x_3}~&=~ f_3+g_3 \\
	  \dot{x_j}~&=~ g_j \quad \quad \quad {\rm  for }~ j=4,\dots,n.
	  \end{align*} 
	% $\dot{x_i}=f_i$ for $i=1,2$, $\dot{x_3}=f_3+g_3$ and $\dot{x_i}=g_i$ for $i=4,\dots,n$, 
	 %and so
	 Hence, the vectors in~\eqref{eq:steady-states-joined-network} indeed are steady states of $(H, \kappa)$.

To show that the steady states~\eqref{eq:steady-states-joined-network} are nondegenerate, 
we must show that the Jacobian matrix of $(H,\kappa)$, when evaluated at each of these steady states, is nonsingular (recall that $H$ is full-dimensional). 
%Since $G$ is full-dimensional, to show that the steady states of $(G,\kappa)$ are nondegenerate, we must show the Jacobian matrix evaluated at these steady states are non-singular. 
Since $G_L$ has mass conservation among the species $X_3,X_4,\dots,X_n$, we have $g_3+g_4+\dots+g_n=0$. Adding rows $4, 5, \dots, n$ to row $3$ of the Jacobian matrix yields a triangular block matrix $\begin{bmatrix} A & 0\\ B&C \end{bmatrix}$, where $A$ is the Jacobian matrix of $G_M$
and $C$ is obtained from the Jacobian matrix of $G_L$ 
by setting $x_3$ to 0. 
%where $X_3$ is replaced by $0$. 
As both $A$ and $C$ are nonsingular, 
when evaluated at any of the positive steady states of its corresponding system, 
we conclude that the Jacobian matrix of $(H,\kappa)$, 
when evaluated at any of the  
steady states~\eqref{eq:steady-states-joined-network},
 is nonsingular, as desired.  
 %{\color{blue} Not sure when $n=3$ case needs to be handled separately.}
%Next, we look at the steady states for $G=G_M\cup G_L$. We claim that $G$ also has 3 positive steady states: $\mathbf{x_j}=(x_{1j},x_{2j},x_{3j},x_{3j},\dots,x_{3j})$ for $j=1,2,3$. Suppose $G_M$ has ODEs $\dot{x_i}=f_i$ for $i=1,2,3$ and $G_L$ has ODEs $\dot{x_i}=g_i$ for $i=3,4,\dots,n$. Then the ODEs for $G$ are $\dot{x_i}=f_i$ for $i=1,2$, $\dot{x_3}=f_3+g_3$ and $\dot{x_i}=g_i$ for $i=4,\dots,n$. Since $\mathbf{x_j}$ satisfy $f_i=0$ for $i=1,2,3$ and $g_i=0$ for $i=3,\dots,n$, they are positive steady states of $G$. This observation, together with the fact that $G_M$ has no ACR in any species, implies that $G$ also has no ACR in any species.

By inspection of the steady states~\eqref{eq:steady-states-joined-network}, we see that there is no ACR in any species. As for the remaining cases, when $X_\ell$ (the common species of $G_M$ and $G_L$) is $X_1$ or $X_2$, the argument is very similar to what is shown above and so the result holds in those cases, too.

Finally, part (2) follows directly from part (1) and Lemma \ref{lem:monotone}.
%Note that while we chose $X_3$ as the common species of $G_M$ and $G_L$, the above arguments and hence, the result also hold for the choice of $X_1$ and $X_2$.}
\end{proof}

% COROLLARY
\begin{corollary}[Bound on threshold $r_2$] \label{cor:thresholds-equal}
Consider the setup for generating random reaction networks $G_n$, described in Section~\ref{sec:random-setup}, for some choice of edge probabilities.
Let $r_2(n)$ be a
%{\color{green} the} 
threshold as defined in Theorem~\ref{theorem:threshold}, and let 
$r_2'(n)$ be the threshold for $G_n$ to contain, as a subnetwork, some $H \in S_{J,n}$. If $p_n \gg r_2'(n)$, then $G_n$ is full-dimensional, is nondegenerately multistationary, and does not have unconditional ACR (in any species) w.h.p.
Consequently, $\limsup \frac{r_2(n)}{r_2'(n)} < \infty$.
% $r_1'(n)$ and $r_2'(n)$ be
% %{\color{green} the}  
% thresholds for $G_n$ to contain as a subnetwork a multistationary motif from $S_{M, n}$ and a lifting component in $S_{L,n-2}$, respectively. 
% Assume that $\{p_n\}_{n \geq 3}$ satisfies the following:
% 	\begin{enumerate}
% 	\item $p_n \gg \max\{r_1'(n),r_2'(n)\}$, and
% 	\item $G_n$ contains some $G \in S_{J,n}$  
% 	as a subnetwork, w.h.p.
% 	%	a multistationary motif from $S_{M, n}$ and a lifting component in $S_{L,n-2}$ that share exactly one species, w.h.p.
% 	%	a multistationary motif and a lifting component sharing exactly one species, w.h.p.
% 	\end{enumerate}

%{\color{violet} [Anne to Tung: I like the restatement!  I'm puzzled about the inequality in teal.  If the two thresholds are the same, then it could be that the inequality is violated, right?  It could be, e.g. $r_2(n) = r_2'(n) +1$.]}
%{\color{violet} [Anne: I like the update.  Is there still some subtlety, if e.g., $r_2(n)=n$ and $r_2'(n)$ is $n$ or $n^2$ based on whether $n$ is even or odd?  (Of course, these are not really thresholds, but I think my idea is conveyed.)] }
\end{corollary}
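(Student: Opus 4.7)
The plan is to combine Proposition~\ref{prop:MM+LC}(2) with the general theory of threshold functions for monotone increasing properties that was already invoked in the proof of Theorem~\ref{theorem:threshold}. The argument splits naturally into two clean steps: first, show that $S_{J,n}$-subnetwork containment is a sufficient combinatorial event for the desired properties; second, use (near-)uniqueness of thresholds to pin down $\limsup r_2(n)/r_2'(n)$.

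First I would observe that the property ``$G_n$ contains some $H \in S_{J,n}$ as a subnetwork'' is monotone increasing with respect to adding edges/reactions (no new species are introduced). By the same threshold-function result~\cite{threshold} cited in the proof of Theorem~\ref{theorem:threshold}, this property therefore admits a threshold function, namely $r_2'(n)$. Hence, if $p_n \gg r_2'(n)$, then w.h.p.\ $G_n$ contains some $H \in S_{J,n}$. Proposition~\ref{prop:MM+LC}(2) then asserts that whenever such a containment occurs, $G_n$ is full-dimensional, nondegenerately multistationary, and does not have unconditional ACR in any species. Concatenating these two w.h.p.\ conclusions yields the first claim of the corollary.

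For the $\limsup$ inequality I would appeal to the standard Bollob\'as--Thomason fact that any two threshold functions for a non-trivial monotone increasing property differ by at most a multiplicative constant. Here $r_2(n)$ is a threshold for the property $A_n$ = ``$G_n$ is full-dimensional, nondegenerately multistationary, and does not have unconditional ACR''; the previous paragraph shows that $r_2'(n)$ also serves as an upper-threshold for $A_n$, in the sense that $p_n \gg r_2'(n)$ forces $\P(A_n)\to 1$. If $r_2(n)/r_2'(n)$ were unbounded along some subsequence, one could choose $p_n$ with $r_2'(n) \ll p_n \ll r_2(n)$ along that subsequence, forcing $\P(A_n)\to 1$ (from $r_2'$) and $\P(A_n)\to 0$ (from $r_2$), a contradiction. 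Hence $\limsup r_2(n)/r_2'(n) < \infty$.

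The only potential obstacle is purely conceptual: matching the one-sided threshold formulation appearing in Theorem~\ref{theorem:threshold} against the two-sided statement from~\cite{threshold}. This is routine, since the cited theorem in fact produces a genuine two-sided threshold for every non-trivial monotone increasing graph property, so the $p_n \ll r_2(n)$ half needed in the contradiction above is automatic. No probabilistic estimates, second-moment computations, or explicit edge-probability calculations are required, as all of the structural work has been carried out in Proposition~\ref{prop:MM+LC}.
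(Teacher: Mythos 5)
Your first claim is proved correctly and exactly as the paper intends: ``$G_n$ contains some $H \in S_{J,n}$'' is a monotone increasing event, hence has a threshold $r_2'(n)$, and Proposition~\ref{prop:MM+LC}(2) converts that containment, whenever it occurs, into full-dimensionality, nondegenerate multistationarity, and absence of unconditional ACR.

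The gap is in your $\limsup$ argument. You run the Bollob\'as--Thomason contradiction on the property $A_n$ = ``full-dimensional, nondegenerately multistationary, and no unconditional ACR,'' and in particular you need the lower half ``$p_n \ll r_2(n) \Rightarrow \P(A_n)\to 0$.'' That half is only guaranteed for monotone increasing properties, and $A_n$ is not known to be monotone: nothing in the paper shows that \emph{absence of unconditional ACR} is preserved under adding reactions. Lemma~\ref{lem:monotone}(ii) only lifts non-ACR jointly with nondegenerate multistationarity witnessed by a \emph{common} $\kappa^*$, which is precisely why the set $S^*_n$ is introduced; a network satisfying $A_n$ need not lie in $S^*_n$, since its multistationarity witness and its non-ACR witnesses may be different rate constants. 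Accordingly, in the proof of Theorem~\ref{theorem:threshold}, $r_2(n)$ is constructed as the threshold for the monotone surrogate event ``$G_n$ contains a subnetwork in $S^*_n$,'' not for $A_n$ itself; for $p_n \ll r_2(n)$ one only knows that this containment fails w.h.p., not that $A_n$ fails. The repair is to run your contradiction on the two nested monotone containment events: by Proposition~\ref{prop:MM+LC}(1) we have $S_{J,n}\subseteq S^*_n$, so containing some $H\in S_{J,n}$ implies containing some $H\in S^*_n$. If $r_2(n)/r_2'(n)$ were unbounded along a subsequence, choosing $r_2'(n)\ll p_n\ll r_2(n)$ there would make $G_n$ contain an $S^*_n$-subnetwork w.h.p.\ (via $S_{J,n}$) and fail to contain one w.h.p.\ (two-sided threshold for the monotone event defining $r_2$), a contradiction. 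With that substitution your argument closes and coincides with the paper's intended one.
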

%In such a case, $\max\{r_1'(n),r_2'(n)\}$ is an upper bound for the threshold $r_2(n)$. 

\begin{proof}
This result follows directly from Proposition~\ref{prop:MM+LC} and Theorem~\ref{theorem:threshold}.  
%{\color{blue} Please double check this proof and then remove this comment!}
% {\color{orange} [May need to update, because proposition on monotonicity was edited.]}{\color{blue}[Should be fine now with the stuff that I added: Def 3.6, proof of 3.7 and prop 3.13.]}
%This result follows directly from Propositions~\ref{lem:monotone}
%and~\ref{prop:MM+LC}, the fact that full-dimensionality is a monotone property with respect to adding reactions (without new species), and 
%the definition of $r_2(n)$ (in Theorem~\ref{theorem:threshold}). 
\end{proof}

%{\color{orange} Are these last two results used in the next section?} {\color{blue} Yes, it is used in Proposition 4.15.}

%{\color{brown} \sout{
%In the next section, we explicitly compute the %threshold upper-bound 
%upper bound on $r_2(n)$ 
% in 
%Corollary~\ref{cor:thresholds-equal}
%for a specific choice of edge probabilities, and show that, for that choice, the bound is tight.%actually {\color{blue} (asymptotically)} equals the threshold.
%}}

\begin{remark} \label{rem:generic-motif} 
In the next section, we show that for a certain choice of 
edge probabilities, the thresholds for full-dimensionality, multistationarity, and containing (as a subnetwork) a multistationary motif in $S_{M,n}$ coincide. 
That is, in this scenario, $r_0(n) = r_1 (n) = r_1' (n)$, where $r_1'$ denotes the threshold for $G_n$ to contain a multistationary motif. 
% to appear as a subnetwork of $G_n$.  
(In fact, in this setting, once we pass the threshold for full-dimensionality, a multistationary motif emerges and furthermore its multistationarity can be lifted; see Theorem~\ref{thm:maintheorem}.)
Intuitively, the reason a motif in $S_{M, n}$ is among the first small multistationary subnetworks to emerge is that it is fairly ``generic": it contains only a pair of reversible reactions and some flow reactions involving those species. Meanwhile, other small multistationary networks, such as the following (from~\cite{atoms_multistationarity}):
\begin{align*} \{
A \leftrightarrows A+B \leftrightarrows 2A, \quad\quad 0 \leftrightarrows A, 
\quad\quad  0\leftrightarrows B \}~,
\end{align*} 
may contain specific pathways such as $A \leftrightarrows A+B \leftrightarrows 2A$, where a species (here, $A$) must appear in all three complexes, and therefore such networks
are expected to emerge at higher thresholds.
\end{remark}

%While there are many small networks which could serve as multistationary motifs, there is a reason to consider the motif in Definition~\ref{M1}. Firstly, it is optimal to find a multistationary motif whose threshold for appearing in the network is same as the threshold for multistationarity. Later we will prove that this is indeed the case (i.e. $p_1'(n)=p_1(n)=p_0(n)$) for some special choice of edge probabilities  (cf. Corollary~\ref{cor:mss}). In a sense, this motif is ``generic"  since it contains a single pair of reversible reactions, and the inflows and outflows of the species. On the other hand, many other small multistationary networks, such as 
%\begin{align}\label{eqn:motif2}
%A \leftrightarrows A+B \leftrightarrows 2A \quad\quad 0 \leftrightarrows A 
%\quad\quad  0\leftrightarrows B 
%\end{align}

%Second, biologically speaking, $M$ is a fairly generic motif. A pair of reversible reactions and the inflows and outflows of the 3 species may easily happen by chance. On the other hand, the motif~(\ref{eqn:motif2}) requires a more specific pathway: $A \leftrightarrows A+B \leftrightarrows 2A$, which may have special purpose or design and is less likely to appear by chance. 

%We compute this upper  bound, $\max\{r_1'(n),r_2'(n)\}$, explicitly for a specific choice of edge probabilities in Section~\ref{sec:stochasticmodel}. \textcolor{blue}{Additionally, we show with that choice of edge probabilities, the upper bound in fact coincides with the actual threshold $r_2(n)$.} 

% -----------------------------
% Stochastic block model
% -----------------------------

\section{Multistationarity and ACR in type-homogeneous stochastic block model}\label{sec:stochastic_block}
The prior section considered general random graph models without specifying the edge probabilities. 
In this subsection,
we introduce a specific choice of edge probabilities (Section~\ref{sec:stochasticmodel}) and then compute the resulting thresholds 
%for full-dimensionality, multistationarity, and lack of ACR, as in 
from Theorem~\ref{theorem:threshold} (see Theorem~\ref{thm:maintheorem} in Section~\ref{sec:threshold-block}).

\subsection{A stochastic block model}\label{sec:stochasticmodel} %for bimolecular reaction networks

One possible choice of edge probabilities comes from the Erd\H os-R\'enyi random graph model; here, the edge probabilities are uniform 
(that is, every edge is equally likely).   
In this framework, reactions of the form $X_i+X_j\leftrightarrows X_h+X_k$ are overwhelmingly the most prominent~\cite{prevalence}. 
However, this situation is unlikely to occur in biochemistry. 
Indeed, in applications, one expects to see various types of complexes and reactions, such as inflow and outflow reactions $0 \leftrightarrows X_i$ or association and disassociation reactions $X_i+X_j\leftrightarrows X_k$. 
%\badalnote{is it possible to replace all ``leftrightarrows'' with ``rightleftarrows'' which is how people write on board? Of course, rate constant labels will have to be switched, which may be too much to do.}

Therefore, we instead consider a model 
 in which {\em reaction types} 
 %various types of  reactions (edges) 
 are equally represented. 
 %\badalnote{suggestion: replace ``various types of  reactions (edges)'' with ``{\em reaction types}''}
This model is a specific case of the {\em stochastic block models}~\cite{holland1983stochastic} 
introduced in \cite{prevalence_block}. 
% {\color{blue}(see \cite{holland1983stochastic} for detailed exposition)}  
To define this model, we need the following partitions of sets of vertices and edges: % (of the random graphs considered in the prior subsection).

%We first define a partition on the set of all possible vertices and edges as follows. 

\begin{definition}[$C_i$ and $E_{i,j}$] \label{def:partitionvertexedge}
	Let $n \geq 1$.  Consider the following partition of the set of vertices $V_n$, as in~\eqref{eqn:vertices}, into 3 subsets: 
%	The set of vertices in $V_n$ is partitioned into the following 3 sets:
	%according to the number of vertices in each type:
	\begin{enumerate}
		\item $C_0= \{ 0 \}$, %; hence \quad $|C_0|=1$
		\item $C_1= \{aX_i \mid 1\leq i\leq n\ \text{ and } a\in\{1,2\}\}$, %; hence \quad $|C_1|\sim n$
		\item $C_2= \{X_i+X_j \mid 1 \le i, j \le n {\rm ~and~} i\neq j\}$. %; hence \quad $|C_2|\sim n^2$
	\end{enumerate}
	%For $0\leq i \leq j \leq 2$, let 
	Let $E_{i,j}$ denote the set of (undirected) edges $(u,v)$ with $u \in C_i$ and $v\in C_j$; in particular: 
\begin{enumerate}
\item $E_{0,1} = \{0\leftrightarrows aX_i \mid 1\leq i\leq n \text{ and } a \in \{1,2\} \}$,
\item $E_{0,2}=\{0\leftrightarrows X_i+X_j \mid 1\leq i,j \leq n \text{ and } i\neq j\}$,
\item $E_{1,1}=\{aX_i \leftrightarrows bX_j \mid 1\leq i,j \leq n,~ a,b \in \{1,2\}, \text{ and } (a,i)\neq(b,j)\}$,
\item $E_{1,2}=\{aX_i\leftrightarrows X_j+X_k \mid 1\leq i,j,k\leq n,~ a \in \{1,2\}, \text{ and } j\neq k\}$,
\item $E_{2,2}=\{X_i+X_j\leftrightarrows X_k+X_h \mid 1\leq i,j,k,h\leq n \text{ and } i\neq j, k\neq h, \text{ and } 
		(i,j)\neq (k,h) \neq (j,i) \}$. %, (i,j)\neq (h,k)\}$,
\end{enumerate}
%where the coefficients $a,b \in \{1,2\}$. 
%We also refer to the sets $E_{i,j}$ as the ``type" of reactions.
Two reactions in the same set $E_{i,j}$ have the same {\em type.}
\end{definition}

\begin{remark} \label{rem:size-of-parts}
	The sets $E_{0,1}, E_{0,2}, E_{1,1}, E_{1,2}, E_{2,2}$ partition the set of all possible edges of a graph with vertex set $V_n$.
	Also, $|C_0|=1$, $|C_1|\sim n$, and $|C_2|\sim n^2$. So, $|E_{i,j}| \sim n^{i+j}$ for $0 \leq i \leq j \leq 2$.  %$0\leq i\leq 2, \text{ and } 1\leq j\leq 2$.
\end{remark}

In what follows, we denote the minimum of two numbers $a$ and $b$ as follows:
\[
	a \wedge b ~:= \mathrm{min}(a,b)~.
\]

\begin{definition} \label{def:stochastic-block-model}
The {\em type-homogeneous stochastic block model} 
generates random graphs $G(V_n,p_n)$ with vertex set $V_n$ (as described in 
Definition~\ref{def:rdm-graph-general}) 
%Section~\ref{sec:random-setup}, 
via
the following choice of edge probabilities:
%and the probability that there is an edge from $E_{i,j}$ is given by 
\begin{equation}\label{edge_probability}
\textrm{if } e \in E_{i,j}, \textrm{ then }
\P(e \text{ is an edge of } G(V_n,p_n))
		~=~	n^{4-i-j}p_n\wedge 1~,
\end{equation}
for all  $0 \leq i\leq j \leq 2$.
\end{definition}

\begin{remark}\label{rem:edgefunc}
	In Definition~\ref{def:stochastic-block-model}, 
	%for type-homogeneous stochastic block model 
	for vertices $u\in C_i $ and $ v\in C_j$, 
	the edge probability function (as in Definition~\ref{def:edgeprob}) for the edge $e=(u,v)$ is given by
%	the edge probabilities are monotonically increasing and given by: 
%	\[\phi_{\{u,v\}}(p_n):=	n^{4-i-j}p_n\wedge 1 \text{ when }u\in C_i \text{ and } v\in C_j.\]
$	\phi_{ e }(p_n) =	n^{4-i-j} p_n\wedge 1 $.
This edge probability function is readily seen to be non-decreasing in $p_n$.
\end{remark}

Recall from Definition~\ref{def:rdm-network-general} 
%Section \ref{sec:random-setup} 
that each random graph $G(V_n, p_n)$ generates a random reaction network~$G_n$. % as follows: 
%{\color{blue} [These 3 bullet points are repeated from earlier.  Why?]}
%\begin{itemize}
%\item The set of species of $G_n$ is $\{X_1,X_2,\dots ,X_n\}$.
%\item The complexes of $G_n$ are the non-isolated vertices of $G(V_n, p_n)$.
%%Each vertex with positive degree in $G(V_n,p_n)$ is a complex in $G_n$. 
%\item The (reversible) reactions of $G_n$ correspond to the edges of $G(V_n, p_n)$.
%%Each edge in $G(V_n,p_n)$ \textcolor{blue}{generates a corresponding reversible pair} of reaction in $G_n$.
%\end{itemize}
%
%{\color{violet} We didn't define degree for graphs anywhere.
%	
%[optional remark]	It is a matter of taste but I would have written it the other way around since we are talking about generating Gn from G(V,pn): something like edges of G(V,pn) gives reversible reactions of Gn. [NK]}
The edge probabilities~\eqref{edge_probability} ensure that, in $G_n$, 
%all types of reactions  appear in $G_n$ in similar abundances (by Remark~\ref{rem:size-of-parts}). In particular, 
the expected numbers of edges of each type are of the same magnitude (namely, $\sim n^4p_n$), whenever possible.  

\begin{example} \label{ex:no-mss}
When $p_n=\frac{1}{n^{3.5}}$, the expected number of reactions of each type in $G_n$ is $\sim\sqrt{n}$. 
%With this choice of $p_n$, $G_n$ not multistationary w.h.p. based on Theorem \ref{thm:maintheorem} in the next subsection.
\end{example}

\begin{example} \label{ex:2.9}
When $p_n=\frac{1}{n^{2.9}}$, $G_n$ contains all reactions in $E_{0,1}$ (since $n^{4-0-1} p_n \wedge 1 =1$), 
and the expected number of reactions for each of the remaining types is $\sim n^{1.1}$.  
%the expected number of reactions of each type except for $E_{0,1}$ is $\sim n^{1.1}$ while all reactions in $E_{0,1}$ are present. With this choice of $p_n$, $G_n$ is multistationary and does not have unconditional ACR w.h.p based on Theorem \ref{thm:maintheorem} in the next subsection.
\end{example}

\begin{remark} \label{rem:all-flows}
When 
$p_n \gg \frac{1}{n^3}$ (for instance, $p_n=\frac{1}{n^{2.9}}$ in Example~\ref{ex:2.9}), 
$G_n$ contains all reactions in $E_{0,1}$, including all inflows/outflows $0\leftrightarrows X_i$, and so $G_n$ is full-dimensional. 
\end{remark}

%then the edge probability in $E_{0,1}$ is $n^3p_n\wedge 1 = 1$, so all inflows and outflows of the form $0\leftrightarrows X_i$ are present. 
%Thus, a random network in this regime is full-dimensional. This fact will be used for several lifting arguments in subsequent propositions and lemmas. 

%\begin{example}
%When $p_n\gg\frac{1}{n^3}$, the expected number of edges of $G_n$ from each $E_{i,j}$ -- except $E_{0,1}$ -- is $n^4p_n$, which is greater than the total number of edges in $E_{0,1}$. In this case, all edges in $E_{0,1}$ are present in $G_n$.
%\end{example}

%However, this is sometimes not possible.  For instance, when $p_n\gg\frac{1}{n^3}$, the expected number of edges of $G_n$ from each $E_{i,j}$ -- except $E_{0,1}$ -- is $n^4p_n$, which is greater than the total number of edges in $E_{0,1}$. In this case, all edges in $E_{0,1}$ are present in $G_n$.

%\begin{example}
%When $p_n=\frac{1}{n^{2.9}}$, the expected number of reactions of each type except for $E_{0,1}$ is $\sim n^{1.1}$ while all reactions in $E_{0,1}$ are present. With this choice of $p_n$, $G_n$ is multistationary and does not have unconditional ACR w.h.p based on Theorem \ref{thm:maintheorem} in the next subsection.
%
%\end{example}

In the next subsection, we see that the 
choice of $p_n$ in Example~\ref{ex:no-mss} generates networks $G_n$ that are  
not multistationary w.h.p.,
while the choice of $p_n$ in Example~\ref{ex:2.9} generates networks that are multistationary and lack ACR w.h.p.\ (see Theorem \ref{thm:maintheorem}).

%-----------------------
% SUBSECTION - THRESHOLDS for stochastic block
%-----------------------
\subsection{Thresholds for multistationarity and ACR} \label{sec:threshold-block}
For the type-homogeneous stochastic block model, 
the thresholds for nondegenerate multistationarity and (no) ACR are stated in the following theorem (which is proven later in Section~\ref{sec:proof}). %later in this section).  
\begin{theorem}[Type-homogeneous stochastic block model] \label{thm:maintheorem} 
Consider the setup for generating random reaction networks $G_n$, described in Section~\ref{sec:random-setup}, for the edge probabilities given by \eqref{edge_probability}.  Then,  for any $\{ p_n \}_{n \geq 1}$:
%Consider a random reaction network $G_n$, with edge probabilities given by \eqref{edge_probability}.
	\begin{enumerate}[(i)]
		\item (Sparse regime) If $\ds \frac{1}{n^4}\ll  p_n\ll \frac{1}{n^{10/3}}$, then w.h.p. 
			$G_n$ has deficiency zero, 
			is not multistationary, 
			and has unconditional ACR (in some species).
		\item (Dense regime, window of co-existence) If $\ds \frac{1}{n^3}\ll p_n \leq \frac{\frac{2}{17}\log(n)-c(n)}{n^3}$ for some $c(n)\to\infty$, then w.h.p $G_n$ is nondegenerately multistationary and has unconditional ACR in some species.
		\item (Dense regime) If $\ds p_n \geq \frac{\log(n-2)+c(n)}{n^2(n-2)}$ for some $c(n)\to\infty$, then w.h.p. $G_n$ is nondegenerately multistationary and does {\bf not} have  unconditional ACR (in any species).
	\end{enumerate}
\end{theorem}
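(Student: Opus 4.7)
The plan is to handle the three regimes separately, each leveraging one of the structural results already assembled in the paper: the Anderson--Nguyen deficiency-zero threshold for the sparse regime (part (i)), Corollary \ref{cor:thresholds-equal} for the dense regime (part (iii)), and a direct count of ``isolated'' species combined with the motif-lifting machinery for the co-existence window (part (ii)).

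For part (i), the Anderson--Nguyen result \cite{prevalence_block, prevalence} already delivers that $G_n$ has deficiency zero w.h.p.\ when $p_n\ll 1/n^{10/3}$, so Lemma \ref{lem:def-0}(1) rules out multistationarity. For unconditional ACR I would show w.h.p.\ that at least one inflow or outflow is present: $E_{0,1}$ consists of $2n$ potential edges, each realized with probability $n^3 p_n\wedge 1$, so the expected count is $\sim n^4 p_n\to\infty$ when $p_n\gg 1/n^4$, and a Poisson-type estimate closes the argument via Lemma \ref{lem:def-0}(2). For part (iii) I would verify the hypothesis of Corollary \ref{cor:thresholds-equal} that $G_n$ contains some $H\in S_{J,n}$ w.h.p. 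Since $p_n\gg 1/n^3$, every edge of $E_{0,1}$ is present deterministically, and a standard second-moment computation on the number of motifs (of which there are $\sim n^3$, each realized with probability $\sim n^3 p_n^2$) produces a multistationary motif $G_M\in S_{M,n}$ w.h.p.\ on some three species, say $X_1,X_2,X_3$. Conditional on this motif, the subgraph of $G_n$ induced on $\{X_1,X_4,\dots,X_n\}$ by the $E_{1,1}$ edges of the form $X_i\leftrightarrows X_j$ is Erd\H{o}s--R\'enyi on $n-2$ vertices with edge probability $q=n^2 p_n\geq (\log(n-2)+c(n))/(n-2)$, which is past the classical connectivity threshold, so this subgraph is connected w.h.p.\ and contains a spanning tree; this tree together with $G_M$ provides the required $H\in S_{J,n}$.

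Part (ii), the co-existence window $1/n^3\ll p_n\leq((2/17)\log n - c(n))/n^3$, is the main obstacle and decomposes into a multistationarity half and an ACR half. For multistationarity, all $E_{0,1}$ reactions are present and a motif $G_M\in S_{M,n}$ emerges w.h.p.\ as in (iii); the subnetwork $G_M\cup\{0\leftrightarrows X_\ell : X_\ell\notin G_M\}$ of $G_n$ is then full-dimensional and nondegenerately multistationary (the appended species decouple from the motif), so an iterated application of Lemma \ref{lem:monotone}(i) lifts nondegenerate multistationarity to all of $G_n$. For ACR, I would show w.h.p.\ that some species $X_k$, distinct from the three motif species, has no reactions in $G_n$ outside $\{0\leftrightarrows X_k,\ 0\leftrightarrows 2X_k\}$; in that case the ODE $\dot x_k = \kappa_1 - \kappa_2 x_k + 2\kappa_3 - 2\kappa_4 x_k^2$ depends only on $x_k$ and has a unique positive root, yielding unconditional ACR in $X_k$. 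Decomposing the expected count $\lambda_k$ of reactions involving $X_k$ outside $\{0\leftrightarrows aX_k\}$ by reaction type gives, to leading order, contributions $n^3 p_n$ from $E_{0,2}$, $4n^3 p_n$ from $E_{1,1}$, $3n^3 p_n$ from $E_{1,2}$, and $(1/2)n^3 p_n$ from $E_{2,2}$, for a total $\lambda_k\sim (17/2)n^3 p_n$. The hypothesis on $p_n$ then yields $\lambda_k\leq \log n - (17/2)c(n)$, so the expected number of isolated species is at least $e^{(17/2)c(n)}\to\infty$; a second-moment estimate (using that two distinct species share only $O(n)$ potential reactions, each of probability at most $n^2 p_n$, so the covariance correction is of order $n^2 p_n=o(1)$) then establishes existence of an isolated species w.h.p.

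The main obstacle is the precise bookkeeping of the constant $17/2$ coupled with the second-moment estimate at the delicate boundary $\lambda_k = \log n - \omega(1)$, where the covariance structure between isolation events of different species must be controlled tightly enough to apply Chebyshev. A secondary technical issue is the iterated application of Lemma \ref{lem:monotone}(i): each addition requires a sufficiently small rate constant, and I would need to verify that the $O(n^2\log n)$ successive reductions can be composed into a single rate-constant vector simultaneously witnessing nondegenerate multistationarity of $G_n$ and preserving the decoupled ODE for $X_k$.
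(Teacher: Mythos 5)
Your proposal is correct and follows essentially the same route as the paper's proof: part (i) via the Anderson--Nguyen deficiency-zero result plus a first-moment bound on $E_{0,1}$ edges; the multistationarity claims via emergence of a motif in $S_{M,n}$ joined to the (deterministically present) inflow reactions and then lifted by Lemma~\ref{lem:monotone}; part (iii) via a second-moment argument producing some $H\in S_{J,n}$, using the Erd\H{o}s--R\'enyi connectivity threshold at edge probability $n^2p_n\geq(\log(n-2)+c(n))/(n-2)$ to supply the lifting component; and the ACR claim in (ii) via a second-moment count of species confined to $\{0\leftrightarrows X_k,\ 0\leftrightarrows 2X_k\}$, which yields the same constant $\tfrac{17}{2}$ as the paper's slightly weaker ``catalyst-only'' event because the catalyst-only reactions involving a fixed species contribute only $O(n^2p_n)=o(1)$ to the exponent. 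Two small points to fix in writing it up: your covariance parenthetical in (ii) is internally inconsistent --- two species share $O(n^{i+j-2})$ potential reactions of type $E_{i,j}$, each of probability $n^{4-i-j}p_n$, so the correction is $O(n^2p_n)=o(1)$ as you conclude, whereas ``$O(n)$ reactions each of probability at most $n^2p_n$'' would give $O(n^3p_n)$, which is \emph{not} $o(1)$ in this window --- and your secondary worry about composing the iterated applications of Lemma~\ref{lem:monotone} is unfounded, since for each fixed $n$ only finitely many one-reaction additions are needed and unconditional ACR in a decoupled species holds for every rate vector, so no simultaneous witness is required.
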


\begin{remark} \label{rem:no_reactions}
When $p_n \ll \frac{1}{n^4}$, the expected number of reactions in $G_n$ is $\ll 1$. 
Accordingly, we do not consider this interval in Theorem \ref{thm:maintheorem}.
%Thus, this interval is not included in the sparse regime in Theorem \ref{thm:maintheorem} to make sure that a random network in the sparse regime is not empty (i.e. has at least one reaction) w.h.p.
\end{remark}

\begin{remark}
Theorem~\ref{thm:maintheorem} immediately yields the following thresholds $r_0(n),r_1(n),r_2(n)$ (as defined in Theorem~\ref{theorem:threshold}) for networks generated by the type-homogeneous stochastic block model:
	\[  r_0(n)~=~ r_1(n)~=~ \frac{1}{n^3} \quad {\rm and} \quad r_2(n)~=~\frac{\log(n)}{n^3}~.\]
%In particular, $r_0(n)=r_1(n)=\frac{1}{n^3}$, and $r_2(n)=\frac{\log(n)}{n^3}$.
\end{remark}

%{\color{violet} Is the above statement the correct implication? I think we obtain (compute) the thresholds to give the theorem. I don't think we obtain the thresholds from the theorem.[NK]}
%
%{\color{blue} The theorem contains stronger results than thresholds (thresholds only require $\gg$ and $\ll$), so the thresholds should be an implication of the theorem. E.g. when $p_n \gg \frac{\log(n)}{n^3}$ clearly it satisfies case 3 and thus we have a threshold for non-ACR.}

\begin{remark}[Thresholds via number of reactions] \label{remark:thresholds}
Theorem \ref{thm:maintheorem} can be rephrased in terms of the expected number of edges (reactions) instead of edge-probability thresholds, as follows. 
For random reaction networks $G_n$ (with $n$ species) generated by the type-homogeneous stochastic block model, 
the following are implied directly by Theorem \ref{thm:maintheorem}:
\begin{enumerate}[(i)]
    \item If the expected number of reactions of each type is $\gg 1$ and $\ll n^{2/3}$, then $G_n$ has deficiency zero and thus is not multistationary w.h.p.
    \item If the expected number of reactions of each type is $\gg n$ but less than $\frac{2}{17}n(\log(n)-c(n))$ for some $c(n)\to\infty$, 
	then  $G_n$
    is nondegenerately multistationariy and has unconditional ACR in some species w.h.p.
    \item  If the expected number of reactions of each type is greater than $n(\log(n)+c(n))$  for some $c(n)\to\infty$, then 
 $G_n$ 
	 is nondegenerately multistationary and does {\bf not} have unconditional ACR (in any species) w.h.p.
\end{enumerate} 
\end{remark}

% --------------------------
% TWO EXAMPLES
% --------------------------

\begin{figure}[htb]
\includegraphics[scale=0.8]{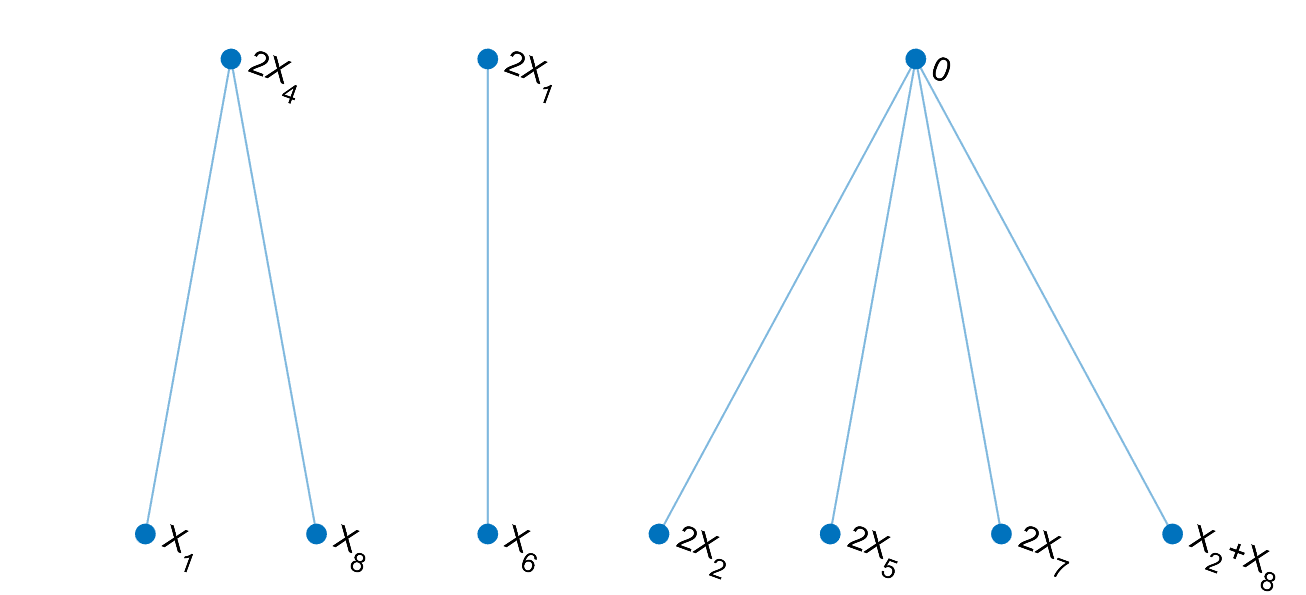}
\caption{ 
A realization of a random reaction network generated by the type-homogeneous stochastic block model 
in the sparse regime ($n=8$ species and $ p_n=\frac{0.5}{n^{3.5}}$).  Edges represent reversible reactions. } 
\label{fig1}
\end{figure}

% EXAMPLE STARTS HERE
\begin{example}[Sparse regime] \label{example:sparse}
Figure \ref{fig1} shows a realization of a random reaction network $G_n$ generated by the type-homogeneous stochastic block model with $n=8$ and $p_n =\frac{0.5}{n^{3.5}}$ (which is in the sparse regime). 
The following properties of $G_n$ are as expected from Theorem~\ref{thm:maintheorem}: 
The deficiency is $
\delta = 10 - 3 - 7 =0 
$ 
and so $G_n$ is not multistationary, and  it is easy to check that $G_n$ has unconditional ACR in all species except $X_3$ (which does not appear in any complex).
\end{example}

\begin{figure}[htb]
\includegraphics[scale=0.8]{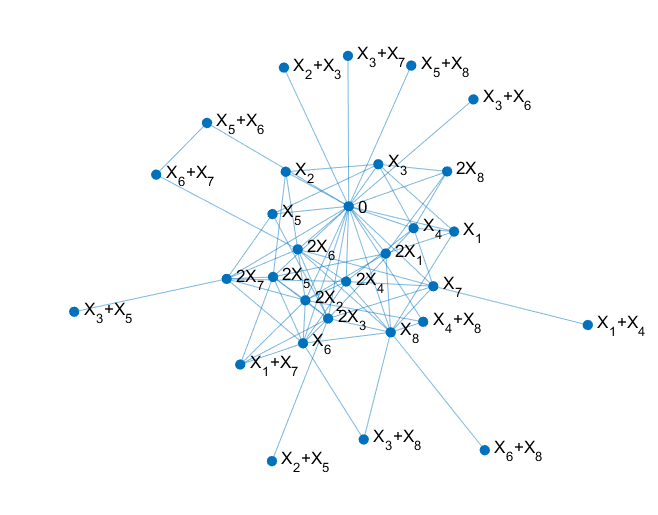}
\caption{A realization of a random reaction network generated by the type-homogeneous stochastic block model in the dense regime ($n=8$ and $p_n=\frac{2.5}{n^{3}}$). } 
\label{fig2}
\end{figure}

\begin{figure}[htb]
\includegraphics[width=10cm,height=5cm]{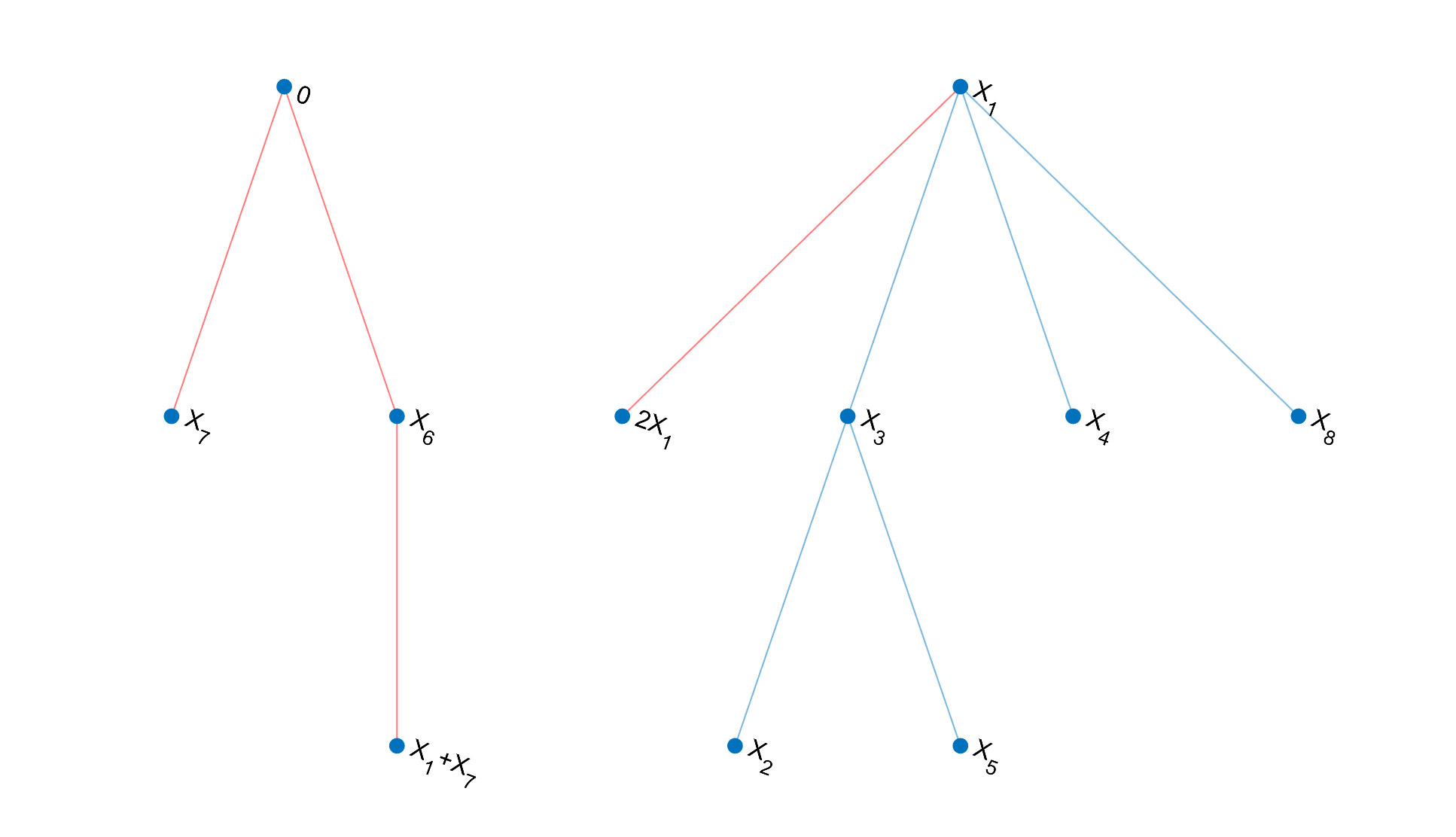}
\caption{A subnetwork of the network in Figure \ref{fig2}, which is a union of a multistationary motif $M$ (red edges) and a lifting component $L$ (blue edges). 
Notice that $M$ and $L$ share exactly one  species, namely, $X_1$.}
% {\color{red} Tung, can you check this example?  All 3 inflows appear among the red reactions, but the multistationary motifs don't have that...}{\color{blue}I have erased the unneeded edge.}}
\label{fig3}
\end{figure}

% EXAMPLE STARTS HERE
\begin{example}[Dense regime]\label{example:dense}
Figure \ref{fig2} shows a realization of a random reaction network $G_n$ generated by the type-homogeneous stochastic block model with $n=8$ and $p_n=\frac{2.5}{n^{3}}$ (which is in the dense regime). 
Figure~\ref{fig3} depicts a subnetwork of $G_n$ that is a union of a multistationary motif and a lifting component with one species in common. We can now use Proposition~\ref{prop:MM+LC} and Lemma~\ref{lem:monotone} to 
assert that this subnetwork is multistationary and lacks ACR, and then 
``lift'' %nondegenerate 
these properties
  to $G_n$.
%, the network $G$ has nondegenerate multistationarity and does not have unconditional ACR.
Indeed, this approach underlies our proof of Theorem~\ref{thm:maintheorem} in the dense regime.
\end{example}

\begin{remark}[Window of co-existence] \label{remark:window}
If the number of species satisfies 
$n<e^{17/2} \approx 4914.8$, then 
the small window between $\frac{1}{n^3}$ and $\frac{\frac{2}{17}\log(n)-c(n)}{n^3}$, in Theorem~\ref{thm:maintheorem}(ii), does not exist. Therefore, in the type-homogeneous stochastic block model, it is unlikely to observe a random network with both multistationarity and ACR, unless it has many species.
\end{remark}

% --------------------------
% proof of theorem (propositions below)
% --------------------------

\subsection{Proof of Theorem~\ref{thm:maintheorem}} \label{sec:proof}

Theorem~\ref{thm:maintheorem} follows directly from Propositions \ref{prop:sparse} and
\ref{prop:mss}--\ref{prop:ACR} below. 
%\ref{prop:mss}, \ref{prop:noACR}, and \ref{prop:ACR} below. 
%The rest of this section
 This subsection is devoted to proving these propositions, which requires the following lemma. % that we make note of now.
%We first make note of an inequality that is often used in our subsequent proofs.
%The following inequality is frequently used in the proof of these propositions.

\begin{lemma} \label{lem:inequality}
For all  $n \geq 1$
%$n \in \mathbb{Z}_{>0}$ 
and $0 \leq x \leq 1$, the following inequality holds:
%Since $\log(1-x) \leq -x$ for any $x\in (0,1)$, we have 
\[
(1-x)^n ~ \leq ~ e^{-nx}~.
\]
%for any $x\in (0,1)$, $n>0$. 
\end{lemma}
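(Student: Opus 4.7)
The plan is to reduce this to the single-variable inequality $1 - x \leq e^{-x}$, which I would then raise to the $n$-th power. This is the standard approach and is essentially the only economical route for such an elementary bound.

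First, I would establish the pointwise inequality $1 - x \leq e^{-x}$ for $0 \leq x \leq 1$. The cleanest way is to define $f(x) := e^{-x} - (1 - x)$ and observe that $f(0) = 0$ and $f'(x) = 1 - e^{-x} \geq 0$ on $[0, 1]$, so $f$ is nondecreasing on this interval and hence $f(x) \geq 0$ throughout. (Alternatively, one can invoke convexity of $e^{-x}$ together with the fact that $1 - x$ is its tangent line at $x = 0$, or group the Taylor series of $e^{-x}$; any of these is a one-line remark.)

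Next, I would note that on $[0, 1]$ we have $0 \leq 1 - x$ and $0 < e^{-x}$, so both sides of $1 - x \leq e^{-x}$ are nonnegative. Since $t \mapsto t^n$ is monotonically nondecreasing on $[0, \infty)$ for any positive integer $n$, raising both sides to the $n$-th power preserves the inequality, yielding
\[
(1 - x)^n ~\leq~ \bigl(e^{-x}\bigr)^n ~=~ e^{-nx}~,
\]
which is exactly the desired conclusion.

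There is really no obstacle here; the only small point to be careful about is ensuring the nonnegativity of $1 - x$ before exponentiating (which is why the hypothesis $x \leq 1$ is needed, even though the lemma is then applied in contexts where $x$ is a small probability). No other ingredients are required.
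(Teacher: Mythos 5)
Your proof is correct and follows essentially the same route as the paper: both arguments reduce the claim to the elementary pointwise inequality $1-x \leq e^{-x}$ (the paper phrases it as $\log(1-x)\leq -x$ for $x<1$, treating $x=1$ separately) and then pass to the $n$-th power. Your version of handling the nonnegativity before exponentiating is a minor stylistic difference, not a different method.
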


\begin{proof}
If $x=1$, the inequality holds.  For $0 \leq x <1$, the result follows directly from the inequality $\log(1-x) \leq -x$ (which is easy to check) and the fact that the log function is increasing.
\end{proof}

%{\color{orange}
%In the propositions below, they all start ``Consider random reaction networks $G_n$ generated by edge probabilities given by \eqref{edge_probability}.''  Maybe we can define a Setup ($\star$) and then just say, ``Assume Setup ($\star$)''. }

%\medskip

\subsubsection{Sparse regime}
%\paragraph{\textbf{Sparse regime}.}
Our result for the sparse regime is  Proposition~\ref{prop:sparse} below.  
Its proof uses 
 Lemma~\ref{lem:def-0} % (pertaining to deficiency-zero networks)}
and recent results on the prevalence of deficiency-zero networks~\cite{prevalence_block} .
\begin{proposition}[Sparse regime]\label{prop:sparse}

Consider random reaction networks $G_n$ generated by edge probabilities given by \eqref{edge_probability}.
If $\frac{1}{n^4}\ll  p_n\ll \frac{1}{n^{10/3}}$, then, w.h.p.\ $G_n$
has deficiency zero, 
			is not multistationary, 
			and has unconditional ACR (in some species).
%When $\frac{1}{n^4}\ll  p_n\ll \frac{1}{n^{10/3}}$, w.h.p. $G_n$ is not multistationary, and $G_n$ has unconditional ACR in some species.
\end{proposition}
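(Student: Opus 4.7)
The plan is to establish two w.h.p.\ statements and combine them via Lemma~\ref{lem:def-0}. Specifically:
(a) $G_n$ has deficiency zero w.h.p., and
(b) $G_n$ contains at least one reaction of the form $0 \leftrightarrows X_i$ w.h.p.
Given (a), Lemma~\ref{lem:def-0}(1) immediately forces non-multistationarity; given (a) and (b) together, Lemma~\ref{lem:def-0}(2) gives unconditional ACR in some species. Intersecting the two high-probability events then yields all three conclusions simultaneously.

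For part (a), the bound $p_n \ll 1/n^{10/3}$ is precisely the sparse-regime threshold for deficiency zero in the type-homogeneous stochastic block model established by Anderson and Nguyen~\cite{prevalence_block}, so I would simply invoke their theorem. The intuition is that at this scale the expected number of reactions of each type is $\sim n^4 p_n \ll n^{2/3}$, which keeps the underlying graph sparse enough that every connected component is essentially a tree, forcing $\delta = v - \ell - \dim(S) = 0$.

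For part (b), among the $n$ possible edges $0 \leftrightarrows X_i$ (for $1 \leq i \leq n$), each is present independently with probability $n^3 p_n \wedge 1$ by~\eqref{edge_probability}. Since $p_n \ll 1/n^{10/3}$ gives $n^3 p_n \to 0$, eventually $n^3 p_n \wedge 1 = n^3 p_n$, so by Lemma~\ref{lem:inequality},
\[
\mathbb{P}\bigl(G_n \text{ contains no edge } 0 \leftrightarrows X_i\bigr) ~=~ (1 - n^3 p_n)^n ~\leq~ e^{-n^4 p_n}~.
\]
The hypothesis $p_n \gg 1/n^4$ gives $n^4 p_n \to \infty$, so this bound tends to $0$, proving~(b).

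The main obstacle is really just extracting the correct deficiency-zero threshold from~\cite{prevalence_block} with the parameters matching our setup; once that is in hand, the inflow/outflow estimate is a short calculation using the edge probabilities from~\eqref{edge_probability}, and the final assembly via Lemma~\ref{lem:def-0} is immediate.
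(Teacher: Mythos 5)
Your proposal is correct and follows essentially the same route as the paper: cite the deficiency-zero prevalence result of Anderson--Nguyen, apply Lemma~\ref{lem:def-0}, and verify w.h.p.\ existence of an inflow/outflow edge via the bound $(1-n^3p_n)^{m}\leq e^{-mn^3p_n}$ with $p_n\gg 1/n^4$. The only (immaterial) difference is that you count just the $n$ edges $0\leftrightarrows X_i$ rather than all $2n$ edges of $E_{0,1}$, which costs a harmless factor of $2$ in the exponent.
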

\begin{proof} 
Assume that $\frac{1}{n^4}\ll  p_n\ll \frac{1}{n^{10/3}}$.
It follows from \cite[Theorem 5.1 and Example 10]{prevalence_block} that, w.h.p., the deficiency of $G_n$ is 0.
Thus, w.h.p., the deficiency-zero theorem (Lemma~\ref{lem:def-0}(1)) applies and so $G_n$ is not multistationary.  
Additionally, by Lemma~\ref{lem:def-0}(2), 
to show that w.h.p. $G_n$ has unconditional ACR in some species, it suffices to show that w.h.p. $G_n$ contains an edge in $E_{0,1}$. 
The probability that any given edge in $E_{0,1}$ appears in $G_n$ is $n^{4-0-1}p_n=n^3p_n$, and there are $|E_{0,1}| = 2n$ such edges, so:
	\begin{align} \label{eq:edge-0-1-pbty}
	\P(G_n \text{ contains an edge in } E_{0,1}) ~=~ 1-\big(1-n^3p_n)^{2n} ~\geq ~ 1 - e^{-2n^4p_n}~.
	\end{align}
(The inequality in~\eqref{eq:edge-0-1-pbty} is due to Lemma~\ref{lem:inequality}.) 
Finally,  
using~\eqref{eq:edge-0-1-pbty} and the assumption $p_n\gg\frac{1}{n^4}$, 
we obtain that 
$\lim_{n\to\infty}\P(G_n \text{ contains an edge in  } E_{0,1}) = 1$.  
This concludes the proof.
\end{proof}

% EXAMPLE WAS HERE.  ANNE MOVED IT EARLIER

\subsubsection{Dense regime}
The proofs in this subsection make frequent use of the well-known second moment method (for example, see \cite{alon2016probabilistic}). 
We summarize this approach in the following lemma.
%We first state it as a lemma.
\begin{lemma}\label{lem:2nd_moment}
Let $\{T_n\}$ be a sequence of non-negative random variables. 
If $\var(T_n) \ll (\E T_n)^2$, then $\lim\limits_{n\to\infty}\P(T_n>0) = 1$.     
\end{lemma}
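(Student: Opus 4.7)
The plan is a direct invocation of Chebyshev's inequality applied to the non-negative random variable $T_n$. The key observation is that since $T_n \geq 0$, the event $\{T_n = 0\}$ is contained in $\{|T_n - \E T_n| \geq \E T_n\}$, so bounding the probability of the latter will bound the probability of the former. This is the standard ``second moment method'' argument from random graph theory.

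First I would dispose of a degenerate case. The hypothesis $\var(T_n) \ll (\E T_n)^2$ implicitly forces $\E T_n > 0$ for all sufficiently large $n$: if $\E T_n = 0$ along a subsequence, then on that subsequence $T_n = 0$ almost surely, giving $\var(T_n) = 0 = (\E T_n)^2$ and contradicting $\var(T_n)/(\E T_n)^2 \to 0$. Hence, for all large $n$, $\E T_n > 0$, and Chebyshev's inequality yields
\[
\P(T_n = 0) ~\leq~ \P\bigl(|T_n - \E T_n| \geq \E T_n\bigr) ~\leq~ \frac{\var(T_n)}{(\E T_n)^2}~.
\]
Since the right-hand side tends to $0$ by assumption, $\P(T_n > 0) = 1 - \P(T_n = 0) \to 1$, as claimed.

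In truth, there is no serious obstacle to this proof; the statement is a one-line classical consequence of Chebyshev's inequality (essentially the Paley–Zygmund bound at threshold $0$). The lemma simply packages this standard tool for convenient use in the subsequent propositions, where $T_n$ will be instantiated as the number of copies of specific subnetworks (such as multistationary motifs joined with lifting components) inside the random reaction network $G_n$. The nontrivial work in the dense-regime propositions lies in estimating $\E T_n$ and $\var(T_n)$ for these combinatorial counts, not in the application of this lemma itself.
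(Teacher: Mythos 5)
Your proof is correct and follows essentially the same route as the paper: the paper simply cites the second moment method to write $\P(T_n>0)\geq 1-\var(T_n)/(\E T_n)^2$ and takes the limit, which is exactly your Chebyshev argument. Your extra remark disposing of the degenerate case $\E T_n=0$ is a harmless (and slightly more careful) addition.
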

\begin{proof}
From the second moment method, we have
\[
\P(T_n >0) ~\geq~ 1 - \frac{\var(T_n)}{(\E T_n)^2}.
\]
Taking the limit (as $n\to\infty$) completes the proof.  
\end{proof}

Next, we show that networks generated in the dense regime  
contain multistationary motifs~\eqref{eq:multi-motif} w.h.p. (see Figures \ref{fig2} and \ref{fig3} for an example).

\begin{lemma}\label{lemma:MM}
Consider random reaction networks $G_n$ generated by edge probabilities given by \eqref{edge_probability}.
If $p_n \gg \frac{1}{n^3}$, 
then w.h.p.\
some multistationary motif in the set $S_{M,n}$ is a subnetwork of $G_n$.
%$G_n$ contains as a subnetwork, a multistationary motif in the set $S_{M,n}$, w.h.p. 
\end{lemma}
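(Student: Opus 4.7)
The plan is to apply the second-moment method (Lemma~\ref{lem:2nd_moment}) to the random variable $T_n$ counting the number of multistationary motifs in $G_n$. For each ordered triple $\alpha=(i,j,k)$ of distinct indices in $\{1,\dots,n\}$, let $M_\alpha$ denote the motif in~\eqref{eq:multi-motif} determined by these indices, and let $X_\alpha = \mathbf{1}[M_\alpha \subseteq G_n]$, so $T_n = \sum_\alpha X_\alpha$. Each motif consists of four edges: $0 \leftrightarrows X_i$ and $0 \leftrightarrows X_j$ in $E_{0,1}$ (each present with probability $n^3 p_n \wedge 1$), $X_k \leftrightarrows 2X_k$ in $E_{1,1}$ (probability $n^2 p_n \wedge 1$), and $X_i \leftrightarrows X_j+X_k$ in $E_{1,2}$ (probability $np_n \wedge 1$). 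Since $p_n \gg 1/n^3$ forces $n^3 p_n \wedge 1 = 1$ for all large $n$, independence of distinct edges yields $\P(X_\alpha = 1) = q_n := (np_n \wedge 1)(n^2 p_n \wedge 1)$.

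The first moment $\E[T_n] = n(n-1)(n-2)\,q_n \sim n^3 q_n$ is analyzed by splitting into sub-regimes: (a) if $p_n \ge 1/n$ then $q_n=1$ and $\E T_n \sim n^3$; (b) if $1/n^2 \le p_n < 1/n$ then $q_n = np_n$ and $\E T_n \sim n^4 p_n \ge n^2$; (c) if $1/n^3 \ll p_n < 1/n^2$ then $q_n = n^3 p_n^2$ and $\E T_n \sim n^6 p_n^2 = (n^3 p_n)^2 \to \infty$ by hypothesis. Hence $\E T_n \to \infty$ in every case.

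Next, bound the variance $\var(T_n) = \sum_{\alpha,\beta} \mathrm{Cov}(X_\alpha, X_\beta)$. Because distinct edges are independent in $G_n$, the covariance vanishes whenever $M_\alpha$ and $M_\beta$ share no edge, so only pairs of triples whose motifs overlap in at least one edge contribute. Stratify such pairs by their overlap pattern (which of the four edge-types are shared, and the positions of the $s$ shared species among the roles $i,j,k$ versus $i',j',k'$). Using the identity $\P(M_\alpha \cap M_\beta) = q_n^2 / \prod_{e \in M_\alpha \cap M_\beta} f(e)$, where $f(e)$ is the edge probability, and the bound that the number of triples $\beta$ sharing exactly $s$ species with a fixed $\alpha$ is $O(n^{3-s})$, I would verify pattern by pattern that the total contribution is $o((\E T_n)^2)$. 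Lemma~\ref{lem:2nd_moment} then gives $\P(T_n > 0) \to 1$, i.e., $G_n$ contains a multistationary motif with high probability.

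The main obstacle is the case analysis in the variance bound: two motifs can overlap in several qualitatively different ways (a shared $E_{0,1}$-edge, a shared $E_{1,1}$-edge, a shared $E_{1,2}$-edge, or combinations), and the dominant ratio changes as $p_n$ crosses the saturation thresholds $1/n^2$ and $1/n$. One must check in each of the three sub-regimes that no single overlap pattern inflates the variance too much — in particular, that dividing $q_n^2$ by the probability of a shared low-probability edge (the $E_{1,2}$-edge is the worst case) does not overwhelm the $O(n^{3-s})$ count of overlapping triples.
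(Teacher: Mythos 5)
Your proposal is correct and would yield a complete proof, but it organizes the second-moment argument differently from the paper. Both arguments reduce the $E_{0,1}$ edges to probability $1$ (valid since $p_n\gg 1/n^3$), split into the same three sub-regimes at $1/n^2$ and $1/n$, and invoke Lemma~\ref{lem:2nd_moment}; the difference is the choice of counting variable. You take $T_n=\sum_\alpha X_\alpha$ over all $\sim n^3$ ordered triples, i.e.\ a standard subgraph-count second moment with stratification by overlap pattern. The paper instead sums only $n$ indicators $1_{A_k}$, where $A_k$ is the event that \emph{some} motif centered at species $k$ appears (it contains $X_k\leftrightarrows 2X_k$ and at least one $X_i\leftrightarrows X_j+X_k$); this coarser grouping leaves only $O(n^2)$ cross terms and replaces your overlap-pattern case analysis with explicit factors of the form $(1-np_n)^{(n-1)(n-2)}$, at the price of less transparent individual probabilities. (The paper also dispatches your sub-regimes (a) and (b) without any variance computation: there the $E_{1,1}$ edges are deterministic and a one-line complement bound suffices.) Your deferred variance estimates do check out: the only non-identical triple sharing the $E_{1,2}$ edge with $(i,j,k)$ is the swap $(i,k,j)$, giving $O(n^3)$ pairs each of probability $O(n^5p_n^3)$ in the regime $p_n<1/n^2$, and pairs sharing only the $E_{1,1}$ edge number $O(n^5)$ with probability $O(n^4p_n^3)$; both contributions are $o\!\left(n^{12}p_n^4\right)=o((\E T_n)^2)$ precisely because $n^3p_n\to\infty$, and pairs sharing only probability-one edges have zero covariance. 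So your plan is sound; it simply trades the paper's aggregation trick for a longer but routine enumeration of overlap types.
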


\begin{proof}
Assume $p_n \gg \frac{1}{n^3}$. 
Then $G_n$ contains all inflows/outflows $0 \leftrightarrows X_i$ (recall Remark~\ref{rem:all-flows}), so 
it suffices to show that w.h.p. $G_n$ contains a subnetwork of the following form, for some $i,j,k$ distinct:
\begin{align} \label{eq:motif-ijk}
\{X_k\leftrightarrows 2X_k, ~ X_i\leftrightarrows X_j + X_k\}~.
\end{align}

Consider the reactions in~\eqref{eq:motif-ijk}.  
First, $X_k\leftrightarrows 2X_k$ is in $ E_{1,1}$, so its edge probability is $n^2p_n\wedge 1$. 
Next, for a fixed $k$, with $1 \leq k \leq n$, there are $(n-1)(n-2)$ reactions of the form $X_i\leftrightarrows X_j + X_k$ with $i,j,k$ distinct. 
Each such reaction belongs to $E_{1,2}$ and so its edge probability is $np_n\wedge 1$.

%It is straightforward to check that 
We can reduce to considering only three cases: 
(1) when  $p_n >  \frac{1}{n}$ for all $n$, 
(2) when  $\frac{1}{n^2} \leq p_n \leq  \frac{1}{n}$ for all $n$, and 
(3) when $p_n<\frac{1}{n^2}$ for all $n$.

\noindent \textbf{Case 1:} $p_n >  \frac{1}{n}$ for all $n$.
In this case, $n^2p_n  \wedge 1 = 1=n p_n \wedge 1 $.  So, for all $n \geq 3$, $G_n$ contains a subnetwork of the form~\eqref{eq:motif-ijk} (in fact, $G_n$ contains all possible such subnetworks).

\noindent \textbf{Case 2:} $\frac{1}{n^2} \leq p_n \leq  \frac{1}{n}$ for all $n$.
In this case, $n^2p_n \wedge 1=1$, so
$G_n$ contains 
 all reactions  of the form $X_k\leftrightarrows 2X_k$. 
 Hence, we need only show that w.h.p.\ 
 $G_n$ contains at least one reaction of the form  $X_i\leftrightarrows X_j + X_k$ with $i,j,k$ distinct.
 The probability of this event, which we call $E_n$, is as follows:
\[
\P(E_n) ~=~
1-(1-np_n)^{n(n-1)(n-2)} ~\geq~ 1 - e^{-n^2(n-1)(n-2)p_n} ~ \to ~ 1 \quad  \mathrm{(as }~ n \to \infty \mathrm{)},
\]
where the inequality is due to Lemma~\ref{lem:inequality}, 
and the limit comes from the fact that  $p_n\geq \frac{1}{n^2}$.  %This completes Case~2.
%Since $p_n\geq \frac{1}{n^2}$ we have $\lim_{n\to\infty}1 - e^{-n^2(n-1)(n-2)p_n}=1$, thus $G_n$ contains a reaction of the form $X_i\leftrightarrows X_j + X_k$ w.h.p.

\noindent \textbf{Case 3:}  
 $p_n<\frac{1}{n^2}$ for all $n$.
In this case, the edge probability for each reaction $X_k\leftrightarrows 2X_k$ (respectively, $X_i\leftrightarrows X_j+X_k$) is
 $n^2p_n$ (respectively, $np_n$).

For $1 \leq k \leq n$, 
let $A_{k}$ denote the event that $G_n$ contains a subnetwork of the form~\eqref{eq:motif-ijk}, where $i\neq j$ and $i,j \neq k$.  
 (The notation $A_{k,n}$ would be better for $A_k$, but we prefer to avoid excessive subscripts.)
It follows that the probability of $A_k$ is:
\begin{align} \label{eq:pbty-A}
\P(A_k) ~=~ n^2p_n\left( 1-(1-np_n)^{(n-1)(n-2)} \right)~.
\end{align}

Define the random variable $T_n :=  \sum_{k=1}^n {1}_{A_k}$.   
We wish to show that
$\lim_{n\to\infty} \P(T_n>0) = 1$. 
By Lemma \ref{lem:2nd_moment}, it is enough to prove $\var(T_n)\ll (\E T_n)^2$. 
% \begin{equation}\label{2ndmoment}
% \lim_{n\to\infty}\frac{\var(T_n)}{(\E T_n)^2} ~=~ 0~.
% \end{equation}
To this end, we first compute $\E T_n$, using~\eqref{eq:pbty-A}:
%\begin{small}
\begin{align} \label{eq:ExpectationT}
%\P(A_k) = n^2p_n(1-(1-np_n)^{(n-1)(n-2)})
 %\text{\quad and \quad} 
 \E T_n ~=~ \sum_{k=1}^n \P(A_k) ~=~ n^3p_n \left( 1-(1-np_n)^{(n-1)(n-2)} \right)~.
\end{align}
%\end{small}

Next, Lemma~\ref{lem:inequality} yields the first inequality here:
	\begin{align} \label{eq:inequality-less-than-1}
	(1-np_n)^{(n-1)(n-2)}
	~\leq~ e^{-n(n-1)(n-2)p_n}
	~ \ll~ 1~,
	\end{align}
and the second inequality (limit) comes from the assumption that $p_n\gg\frac{1}{n^3}$.
%, we get, for $n$ sufficiently large, $(1-np_n)^{(n-1)(n-2)}\leq e^{-n(n-1)(n-2)p_n} \ll 1$. 
Hence, using~\eqref{eq:ExpectationT}, we obtain $\E T_n\sim n^3p_n$. % \gg 1$. 

%{\color{brown} \sout{
%Next, we show that  the events $A_k$  are ``almost" pairwise independent, which implies the sum $T_n$ concentrates around the mean. } {\color{red} [or reword so it is clear this comment is for readers already familiar with this approach?]}}
%
To compute $\var(T_n)$, we consider the event $A_h\cap A_k$, where $h\neq k$.  It is straightforward to check that  $A_h\cap A_k$ occurs if and only if $G_n$ contains the reactions $X_h\leftrightarrows 2X_h$ and  $X_k\leftrightarrows 2X_k$ and also one of the following:
	\begin{enumerate}
		\item a reaction of the form $X_i\leftrightarrows X_h+X_k$ (for some $i \neq h,k$), or 
		\item a reaction of the form $X_i\leftrightarrows X_j + X_k$ (for some $j\neq k, h$ and $i\neq j, k$) and a reaction of the form $X_l \leftrightarrows X_m + X_h$ (for some $m\neq h, k$ and $l\neq m,h$).
	\end{enumerate}

%{\color{blue} \sout{
%In this event, both $X_k\leftrightarrows 2X_k$ and $X_h\leftrightarrows 2X_h$ are present. 
%In addition, either 
%$X_i\leftrightarrows X_h+X_k$ is present, or 
%there is a reaction of the form $X_i\leftrightarrows X_j + X_k$ with $j\neq k, h$ and $i\neq j, k$, 
%and a reaction of the form $X_l \leftrightarrows X_m + X_h$ with $m\neq h, k$ and $l\neq m,h$. This gives:}}
A direct computation now yields the following probability:
\begin{align} \label{eq:pbty-intersection}
	\P(A_h\cap A_k)  \notag
%	&=
%	 (n^2p_n)^2 
%	 \left(
%	 1 - (1-n p_n) ^{n-2} \left( 1 - (1 - (1-n p_n)^{{(n-2)^2}} )^2 \right)
%	 \right) \\
	&~=~ (n^2p_n)^2 
			(1-(1-np_n)^{n-2} + (1-np_n)^{n-2} (1-(1-np_n)^{(n-2)^2})^2)\\
%	&=n^4p_n^2(1-2(1-np_n)^{(n-2)^2+n-2}+(1-np_n)^{2(n-2)^2+n-2})\\
        &~=~ n^4p_n^2(1-2(1-np_n)^{(n-1)(n-2)}+(1-np_n)^{(n-2)(2n-3)})~.
\end{align}
Now we use equations~\eqref{eq:pbty-A}, \eqref{eq:ExpectationT}, and \eqref{eq:pbty-intersection}
to 
 compute $\var(T_n)$, as follows:

\begingroup
\allowdisplaybreaks
\begin{small}
\begin{align} \label{eq:variance}
	\notag
    \var(T_n) &~=~ \E(T_n^2) - (\E T_n)^2 \\
    	\notag
    & ~=~ 
    		\sum_{k=1}^n\P(A_k)+\sum_{h\neq k}\P(A_h\cap A_k) 
    	- (\E T_n)^2 \\
	%- n^6p_n^2 (1-(1-np_n)^{(n-1)(n-2)})^2\\
	\notag
    & ~=~ n^3p_n \left( 1-(1-np_n)^{(n-1)(n-2)} \right) + (n-1)n^5p_n^2\left( 1-2(1-np_n)^{(n-1)(n-2)} + (1-np_n)^{(n-2)(2n-3)} \right) \\
        	& \quad \quad - n^6p_n^2 \left(1-(1-np_n)^{(n-1)(n-2)} \right)^2 ~. %\\
	%- n^6p_n^2 (1-(1-np_n)^{(n-1)(n-2)})^2\\
%    &~=~ n^3p_n(1-(1-np_n)^{(n-1)(n-2)}) - n^5p_n^2(1-2(1-np_n)^{(n-1)(n-2)}+(1-np_n)^{(n-2)(2n-3)}) \\
%    	& \quad \quad  + n^6p_n^2(2(1-np_n)^{(n-1)(n-2)}-2(1-np_n)^{(n-1)(n-2)}+(1-np_n)^{(n-2)(2n-3)}-(1-np_n)^{2(n-1)(n-2)})\\
%    &~=~ n^3p_n(1-(1-np_n)^{(n-1)(n-2)}) - n^5p_n^2(1-2(1-np_n)^{(n-1)(n-2)}+(1-np_n)^{(n-2)(2n-3)}) \\
%    	& \quad \quad  + n^6p_n^2(1-np_n)^{(n-2)(2n-3)}(1-(1-np_n)^{n-2}).
\end{align}
\end{small}
\endgroup

We claim that $\var(T_n) \ll n^6p_n^2$.  Indeed, this follows in a straightforward way 
from~\eqref{eq:inequality-less-than-1} and~\eqref{eq:variance}, 
 the limit $(1-np_n)^{(n-2)(2n-3)} \ll 1$ (which is closely related to~\eqref{eq:inequality-less-than-1}), and the assumption $p_n\gg\frac{1}{n^3}$.  
%Since $p_n\gg\frac{1}{n^3}$ and $(1-np_n)^{(n-2)(2n-3)} \leq e^{-np_n((n-2)(2n-3))} \ll 1$, we have $\var(T_n) \ll n^6p_n^2$. 
Finally, having shown $\var(T_n) \ll n^6p_n^2$ and $\E T_n \sim n^3p_n$, we get, as desired,  $\var(T_n)\ll (\E T_n)^2$.
%the desired inequality (limit): $\var(T_n)\ll (\E T_n)^2$.
%
%in \eqref{2ndmoment}.
% \begin{align*}
% 	\P(A_h\cap A_k) 
% 	&= (n^2p_n)^2 
% 		(np_n + {(1-np_n)} (1-(1-np_n)^{(n-2)^2})^2)\\
% 	&=n^4p_n^2(1-2(1-np_n)^{(n-2)^2+1}+(1-np_n)^{2(n-2)^2+1}).
% \end{align*}
% Now we can compute $\var(T_n)$
% \begingroup
% \allowdisplaybreaks
% \begin{small}
% \begin{align*}
%     \var(T_n) &= \E(T_n^2) - (\E T_n)^2 \\
%     & = \sum_{k=1}^n\P(A_k)+\sum_{h\neq k}\P(A_h\cap A_k) - n^6p_n^2 (1-(1-np_n)^{(n-1)(n-2)})^2\\
%     & = n^3p_n(1-(1-np_n)^{(n-1)(n-2)}) + (n-1)n^5p_n^2(1-2(1-np_n)^{(n-2)^2+1}+(1-np_n)^{2(n-2)^2+1}) \\
%     &- n^6p_n^2 (1-(1-np_n)^{(n-1)(n-2)})^2\\
%     &= n^3p_n(1-(1-np_n)^{(n-1)(n-2)}) - n^5p_n^2(1-2(1-np_n)^{(n-2)^2+1}+(1-np_n)^{2(n-2)^2+1}) \\
%     & + n^6p_n^2(2(1-np_n)^{(n-1)(n-2)}-2(1-np_n)^{(n-2)^2+1}+(1-np_n)^{2(n-2)^2+1}-(1-np_n)^{2(n-1)(n-2)})\\
%     &=n^3p_n(1-(1-np_n)^{(n-1)(n-2)}) - n^5p_n^2(1-2(1-np_n)^{(n-2)^2+1}+(1-np_n)^{2(n-2)^2+1}) \\
%     & + n^6p_n^2(1-np_n)^{(n-2)^2+1}(2(1-np_n)^{n-3}-2+(1-np_n)^{(n-2)^2}-(1-np_n)^{n(n-2)-1}).
% \end{align*}
% \end{small}
% \endgroup
% Since $p_n\gg\frac{1}{n^3}$ and $(1-np_n)^{(n-2)^2+1} \leq e^{-np_n((n-2)^2+1)} \ll 1$, we have $\var(T_n) \ll n^6p_n^2$. Combined with $\E T_n \sim n^3p_n$, we obtain the desired limit in \eqref{2ndmoment}.
\end{proof}

%{\color{brown} The conversation Anne/Nidhi/Tung had about the end of the proof is commented out, but easily seen in ``old version'' PDFs.} %we had, just moved down here.}

%{\color{teal}
%\begin{align*}
%	\P(A_h\cap A_k) 
%	&=
%	 (n^2p_n)^2 
%	 \left(
%	 1 - (1-n p_n) ^{n-2} \left( 1 - (1 - (1-n p_n)^{{(n-2)^2}} )^2 \right)
%	 \right)
%\end{align*} }
%{\color{red} Anne got the expression above.  Original is below.  Tung or Nidhi, can you check?}
%
%\blue{[N.]I don't completely follow how we get to teal expression but I do think in the original expression some powers are missing and now I am getting this:
%\begin{align*}
%\P(A_h\cap A_k) 
%&= (n^2p_n)^2 
%((np_n)^{n-2} + {(1-np_n)^{n-2}} (1-(1-np_n)^{(n-2)^2})^2).
%\end{align*}}
%{\color{blue} [Tung: I agreed with Anne. In the original version I fixed i somehow. In Nidhi's expression the probability of having $X_i\leftrightarrows X_h+X_k$ should be $1-(1-np_n)^{n-2}$ (and not $(np_n)^{n-2}$), which makes the whole expression the same as Anne's. I fixed the rest below, please double check.]
%\begin{align*}
%	\P(A_h\cap A_k) 
%	&= (n^2p_n)^2 
%		(1-(1-np_n)^{n-2} + (1-np_n)^{n-2} (1-(1-np_n)^{(n-2)^2})^2)\\
%\end{align*}
%
%}

Lemma \ref{lemma:MM} allows us to establish the threshold for nondegenerate multistationarity, as follows:

\begin{proposition}[Multistationarity in dense regime]\label{prop:mss}

Consider random reaction networks $G_n$ generated by edge probabilities in \eqref{edge_probability}.
If $p_n \gg \frac{1}{n^3}$, then $G_n$ is nondegenerately multistationary~w.h.p. 
\end{proposition}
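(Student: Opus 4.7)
The plan is to build, inside $G_n$, a full-dimensional, nondegenerately multistationary subnetwork $N$, and then invoke Lemma \ref{lem:monotone}(i) iteratively to propagate that property to all of $G_n$. The construction of $N$ exploits two facts supplied by the hypothesis $p_n \gg 1/n^3$: by Lemma \ref{lemma:MM}, w.h.p.\ $G_n$ contains a multistationary motif $M \in S_{M,n}$, and, since $n^{3}p_n \to \infty$ forces $n^{3}p_n \wedge 1 = 1$ for every edge in $E_{0,1}$ once $n$ is large, the network $G_n$ deterministically contains every inflow/outflow $0 \leftrightarrows X_\ell$ (Remark \ref{rem:all-flows}).

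Concretely, write $M = \{X_i \leftrightarrows X_j + X_k,\ 0 \leftrightarrows X_i,\ 0 \leftrightarrows X_j,\ X_k \leftrightarrows 2X_k\}$ for some distinct $i,j,k$, and define
\[
N ~:=~ M \ \cup\ \{\,0 \leftrightarrows X_\ell \;:\; \ell \in \{1,\dots,n\}\setminus\{i,j,k\}\,\}~.
\]
Then $N \subseteq G_n$ w.h.p., and the species set of $N$ equals $\{X_1,\dots,X_n\}$. The stoichiometric subspace of $N$ contains the standard basis vector $e_\ell$ for every $\ell \neq k$ (from the inflows) together with $e_k$ (from $X_k \leftrightarrows 2X_k$), so $N$ is full-dimensional. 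For nondegenerate multistationarity, I would assign the rate constants of Example \ref{eg:multimotifrates} to the reactions of $M$ -- producing three nondegenerate positive steady states $(s_i^{(m)},s_j^{(m)},s_k^{(m)})$ for $m=1,2,3$ -- and attach arbitrary positive rate constants $\alpha_\ell,\beta_\ell$ to each added inflow. Because the added inflows involve none of $X_i,X_j,X_k$, the ODEs of $N$ decouple as the $M$-dynamics on the motif coordinates and $\dot x_\ell = \alpha_\ell - \beta_\ell x_\ell$ elsewhere. Hence $N$ has three positive steady states of the form $(s_i^{(m)},s_j^{(m)},s_k^{(m)},\alpha_\ell/\beta_\ell,\dots)$, whose Jacobians are block-diagonal with the nonsingular motif block and the nonsingular diagonal block $\mathrm{diag}(-\beta_\ell)_{\ell \notin\{i,j,k\}}$, and hence nonsingular.

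Since $N$ and $G_n$ share the species set $\{X_1,\dots,X_n\}$, every reaction of $G_n\setminus N$ involves no new species. Starting from $N$ (which is full-dimensional and nondegenerately multistationary) and adding the remaining reactions of $G_n$ one at a time, Lemma \ref{lem:monotone}(i), together with Remark \ref{rem:monotone}, preserves nondegenerate multistationarity at each step, yielding the claim w.h.p.

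The main obstacle is the dimension mismatch between the $3$-dimensional motif $M$ and the $n$-dimensional ambient $G_n$: Lemma \ref{lem:monotone}(i) cannot be invoked with $M$ itself as the base. The $S_{J,n}$-route of Corollary \ref{cor:thresholds-equal} resolves this by pairing $M$ with a spanning-tree lifting component on single-species complexes, but in the regime $p_n \gg 1/n^3$ the induced edge probability $n^{2} p_n \wedge 1$ on pairs from $C_1$ can lie well below the Erd\H os--R\'enyi connectivity threshold $\log n / n$, so such a spanning tree need not exist w.h.p. The point of the construction above is that the edge-probability cap at $1$ in $E_{0,1}$ makes all inflows available for free, and these supply exactly enough additional independent reaction vectors to fill out the stoichiometric subspace while leaving the motif's dynamics untouched.
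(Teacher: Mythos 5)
Your proposal is correct and follows essentially the same route as the paper: invoke Lemma \ref{lemma:MM} to obtain a multistationary motif w.h.p., adjoin the inflow/outflow reactions $0\leftrightarrows X_\ell$ for the remaining species (which are present deterministically since $p_n\gg 1/n^3$) to form a full-dimensional network whose dynamics decouple into the motif block and a diagonal block, and then lift via Lemma \ref{lem:monotone}(i). Your explicit block-diagonal Jacobian computation simply fills in the step the paper defers to ``similar to Proposition \ref{prop:MM+LC}(1),'' and your closing remark about why the $S_{J,n}$ route would fail below the connectivity threshold correctly explains why this construction, rather than the one in Proposition \ref{prop:noACR}, is the right one here.
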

\begin{proof}
Assume $p_n \gg \frac{1}{n^3}$.
By Lemma~\ref{lemma:MM}, w.h.p., $G_n$
contains (as a subnetwork) a multistationary motif $M \in S_{M,n}$ (which is $3$-dimensional and nondegenerately multistationary, as noted in Remark~\ref{rem:motif-has-mss}).
Relabeling species, if needed, we may assume that the species of $M$ are $X_1,X_2,X_3$.

Next, $p_n\gg \frac{1}{n^3}$ implies that, w.h.p., 
$G_n$ contains all inflow/outflow reactions $0 \leftrightarrows X_i$ (Remark~\ref{rem:all-flows}), 
and in particular contains the $(n-3)$-dimensional subnetwork consisting of reactions $0 \leftrightarrows X_i$, for all $i=4,5, \dots, n$, which we denote by $G'$.  
%$G'=\cup_{i=4}^n 0\leftrightarrows X_i$. Since 
As $M$ and $G'$ have no species in common,
the subnetwork of $G$ formed by the union of their reactions, which we denote by $N$, is full-dimensional ($n$-dimensional).
 %the subgraph $M\cup G'$ of $G_n$ is full-dimensional 
It is straightforward to check that $N$ ``inherits'' nondegenerate multistationarity from $M$. (The proof is similar to that of 
Proposition~\ref{prop:MM+LC}(1).) 
 Thus, by Lemma \ref{lem:monotone},  
$G_n$ is nondegenerately multistationary  w.h.p. 
\end{proof}

\begin{proposition}[ACR in dense regime]\label{prop:noACR}
Consider random reaction networks $G_n$ generated by edge probabilities given by \eqref{edge_probability}.
 If the following inequality holds:
 \begin{align} \label{eq:inequality-dense}
 \ds p_n ~ \geq~ \frac{\log(n-2)+c(n)}{n^2(n-2)}~, \quad \textrm{ for some } c(n)\to\infty~,
 \end{align}
 then w.h.p. $G_n$ 
 %\sout{is nondegenerately multistationary and} 
 does {\bf not} have  unconditional ACR (in any species).
 \end{proposition}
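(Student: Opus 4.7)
The plan is to apply Corollary~\ref{cor:thresholds-equal}: it suffices to show that w.h.p.\ $G_n$ contains some $H \in S_{J,n}$ as a subnetwork, meaning a multistationary motif $M \in S_{M,n}$ glued to a lifting component $L \in S_{L,n-2}$ (a tree of reactions $X_i \leftrightarrows X_j$ on $n-2$ species) through a single shared species. Since
\[
p_n \cdot n^3 \;\geq\; \frac{n \bigl(\log(n-2) + c(n)\bigr)}{n-2} \;\longrightarrow\; \infty,
\]
we have $p_n \gg 1/n^3$, so Lemma~\ref{lemma:MM} already delivers a (random) motif $M$ on some species triple $\{X_a, X_b, X_c\}$, w.h.p. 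What remains is to produce a compatible $L$.

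First I would set up the random graph $H_n$ on $\{X_1,\ldots,X_n\}$ whose edges are the reactions $X_i \leftrightarrows X_j$ (with $i \neq j$), each present independently with probability $q_n = n^2 p_n \wedge 1$. Two observations will drive the argument. First, these edges are disjoint from all edges used by any motif (motifs use $E_{0,1}$ and $E_{1,2}$ edges together with the single self-edge $X_k \leftrightarrows 2X_k$ in $E_{1,1}$), so events about $H_n$ are probabilistically independent of events about which motifs appear. Second, the hypothesis is tuned precisely to the classical Erd\H os--R\'enyi threshold, $q_n \geq (\log(n-2) + c(n))/(n-2)$, the edge probability at which a random graph on $n-2$ vertices becomes connected w.h.p. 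Consequently, for any \emph{fixed} set of $n-2$ species, the induced subgraph of $H_n$ on that set is connected w.h.p., and any spanning tree of that subgraph qualifies as $L$.

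To finish, I plan to run a joint second-moment argument on the random variable $T_n$ counting pairs $(M,L)$ for which $M \subseteq G_n$ is a multistationary motif on species $\{X_a, X_b, X_c\}$ and $L \subseteq H_n$ is a spanning tree in the induced subgraph on $\{X_s\} \cup \{X_\ell : \ell \notin \{a,b,c\}\}$ for some $s \in \{a,b,c\}$. By the independence noted above, $\E T_n$ factors into a motif-count piece (estimated just as in the proof of Lemma~\ref{lemma:MM}) times a spanning-tree-count piece (estimated via Cayley-type enumeration combined with the connectivity estimate); both grow, so $\E T_n \to \infty$. The variance can be controlled by tracking overlap contributions between pairs of motifs and between pairs of spanning trees, in the style of Case 3 of Lemma~\ref{lemma:MM}, yielding $\var(T_n) \ll (\E T_n)^2$. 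Then Lemma~\ref{lem:2nd_moment} gives $T_n > 0$ w.h.p., and Proposition~\ref{prop:MM+LC}(2) closes the argument.

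The main technical obstacle is precisely this coupling step: the pair of species to be excluded from $L$'s vertex set is determined by the randomly located motif, and the hypothesis is so tight that a naive union bound over all $\binom{n}{2}$ possible exclusion pairs would demand $c(n) \gtrsim \log n$ rather than merely $c(n) \to \infty$. One must therefore avoid conditioning separately on the motif and then on connectivity, and instead exploit the disjointness of motif-edges and $H_n$-edges through a simultaneous joint-moment analysis.
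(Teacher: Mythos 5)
Your proposal matches the paper's own argument: the paper defines the joint events $A_{k,i,j}$ (the motif reactions on the triple $\{i,j,k\}$ are present \emph{and} the induced graph on the remaining $n-2$ species is connected, at exactly the Erd\H os--R\'enyi connectivity threshold $n^2p_n\geq\frac{\log(n-2)+c(n)}{n-2}$) and runs the second moment method on $T_n=\sum 1_{A_{k,i,j}}$, which is precisely the simultaneous joint-moment analysis you identify as necessary to avoid the failing union bound over exclusion pairs; it also uses the same edge-disjointness/independence observation and closes via Proposition~\ref{prop:MM+LC}(2). One small caution: the paper counts connectivity indicators rather than spanning trees, so that $\P(A_{k_1,i_1,j_1}\cap A_{k_2,i_2,j_2})$ is bounded simply using $d_n\to 1$; literally enumerating spanning trees via Cayley-type counts would make the variance computation substantially more delicate at this threshold, so you should use the indicator version.
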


\begin{proof}
% {\color{brown} Combining Proposition~\ref{prop:MM+LC}, Lemmas~\ref{lemma:MM}, \ref{theorem:LC} and the fact that $ \frac{\log(n-2)+c(n)}{n^2(n-2)} \gg \frac{1}{n^3}$ we obtain the following result.}. [ADD MORE DETAIL, BELOW?]
% {\color{blue} Outline:
% \begin{itemize}	
% 	\item $ \frac{\log(n-2)+c(n)}{n^2(n-2)} \gg \frac{1}{n^3}$, so by  Lemma~\ref{lemma:MM} we get multistationary motifs w.h.p.
% 	\item Use those motifs to define corresponding ``one-shared-species-only sets'' $\Sigma_n$?  {\color{red} Not sure.}
% 	\item Use Lemma \ref{theorem:LC} (and Proposition~\ref{lem:monotone}?) to show such lifting components (with vertex set $\Sigma_n$) exist.
% 	\item Combine the two to get joined subnetworks $H_n \in S_{J,n}$.
% 	\item Use $H_n$ and Proposition~\ref{lem:monotone} to lift to $G_n$.
% 	\end{itemize}
% }
Assume that inequality~\eqref{eq:inequality-dense} holds.  
By Proposition~\ref{prop:MM+LC}(2), it suffices to show that, w.h.p., some $H \in S_{J,n}$ (as in Definition~\ref{def:joined-sets}) is a subnetwork of $G_n$.

First, consider the case when $p_n \geq \frac{1}{n^2}$ for all $n$.
Then, $p_n \gg \frac{1}{n^3}$, so by Lemma~\ref{lemma:MM}, $G_n$ contains, as a subnetwork, some multistationary motif $M \in S_{M,n}$.  
Next, we show that $G_n$ also contains all lifting components involving species $\{X_1, X_2, \dots, X_n\}$.  Indeed, reactions of the form $X_{\ell} \leftrightarrows X_m$ are in $E_{1,1}$ and hence have edge-probability $n^{4-1-1} p_n \wedge 1 = 1$ (since $p_n \geq \frac{1}{n^2}$), and so $G_n$ contains all such reactions.  
Thus, as desired, w.h.p., $G_n$ contains some $H \in S_{J,n}$ as a subnetwork.

%{\color{blue} \sout{
%In this case, $G_n$ contains all reactions of the form $X_k\leftrightarrows 2X_k$ and $X_i\leftrightarrows X_j$. 
%Furthermore, from the proof of Lemma \ref{lemma:MM}, w.h.p. $G_n$ contains a reaction of the form $X_i\leftrightarrows X_j+X_k$ with $i,j,k$ distinct. Thus w.h.p. $G_n$ contains a subnetwork in $S_{J,n}$, which implies it does not have unconditional ACR in any species by Lemma~\ref{lem:monotone} and Proposition~\ref{prop:MM+LC}. }}

To complete the proof, we need only consider the following case:
%For the rest of the proof, it is enough we can assume the following: 
	\begin{align} \label{eq:inequality-rest-of-proof}
	\ds \frac{\log(n-2)+c(n)}{n^2(n-2)}  ~ \leq ~ p_n ~<~ \frac{1}{n^2}~, \quad \textrm{ for some } c(n)\to\infty~.
	\end{align}

%{\color{blue} If $p_n \geq \frac{1}{n^2}$, $G_n$ must contain all reactions of the form $X_k\leftrightarrows 2X_k$ and $X_i\leftrightarrows X_j$. Furthermore, from the proof of Lemma \ref{lemma:MM}, w.h.p. $G_n$ contains a reaction of the form $X_i\leftrightarrows X_j+X_k$ with $i,j,k$ distinct. Thus w.h.p. $G_n$ contains a subnetwork in $S_{J,n}$, which implies it does not have unconditional ACR in any species by Propositions~\ref{lem:monotone} and~\ref{prop:MM+LC}. For the rest of the proof, we can assume $\ds \frac{\log(n-2)+c(n)}{n^2(n-2)} \leq p_n <\frac{1}{n^2}$ for some $c(n)\to\infty$.}

Recall from Remark~\ref{rem:all-flows} that, in this case, $G_n$ contains all reactions of the form $0\leftrightarrows X_{\ell}$. Thus, all vertices of the form $X_{\ell}$ appear in $G_n$.
For positive integers $i\neq j$, 
let $G_n^{i,j}$ denote the subgraph of (the underlying graph of) $G_n$, induced by the following set of vertices of $G_n$: 
$\left\{X_{\ell} \mid \ell \in \{1,2,\dots, n\} \smallsetminus \{i,j\} \right\}$. 
%1\leq m\leq n, m\neq i, m\neq j\}$. 
Next, for distinct $i,j,k \in \{1,2,\dots, n\}$, 
%$1\leq k,i,j \leq n$ distinct, 
let $A_{k,i,j}$ denotes the event that 
(i)~$G_n^{i,j}$ is connected and
(ii) $G_n$ contains the reactions $X_k\leftrightarrows 2X_k$ and $X_i\leftrightarrows X_j+X_k$. 

We claim that, for $n$ sufficiently large, the event $A_{k,i,j}$ implies that $G_n$ contains some $H \in S_{J,n}$ as a subnetwork.  
To see this, first note that the inequality~\eqref{eq:inequality-rest-of-proof} implies that $p_n \gg \frac{1}{n^3}$ and so, for $n$ large enough, $G_n$ contains all flow reactions $0 \leftrightarrows X_{\ell}$
(Remark~\ref{rem:all-flows}).   So, condition~(ii) guarantees a multistationary motif $M$, for $n$ sufficiently large.  Next, condition~(i) and the fact that connected graphs have spanning trees yield a ``complementary'' lifting component $L$. 
By joining $M$ and $L$, we obtain some $H \in S_{J,n}$  as a subnetwork of $G_n$, as claimed.
%Thus, the event $A_{k,i,j}$ implies that $G_n$ contains some $H \in S_{J,n}$ obtained by joining $M$ and $L$.

%\sout{
%By Lemma~\ref{lem:monotone} and Proposition~\ref{prop:MM+LC}, the  event $\bigcup_{i,j, k}A_{k,i,j}$ 
%(where the union is over distinct $i,j,k \in \{1,2,\dots, n\}$) 
%implies that $G_n$ is nondegenerately multistationary and does not have unconditional ACR in any species.} 

Let $T_n=\sum_{k,i,j}1_{A_{k,i,j}}$ 
(where the sum is over distinct $i,j,k \in \{1,2,\dots, n\}$). 
To finish the proof, it suffices to show that 
%\[
$\lim_{n\to\infty} \P(T_n>0) =1.$
%\]
% As in the proof of Lemma \ref{lemma:MM}, we prove this limit by the second moment method. In particular, we need to show the following:
% \[
% \lim_{n\to\infty} \frac{\var(T_n)}{(\E T_n)^2} = 0.
% \]
By Lemma \ref{lem:2nd_moment}, we need only show $\var(T_n)\ll (\E T_n)^2$.

Again, we start by computing $\E T_n$. Each edge of $G_n^{i,j}$ belongs to $E_{1,1}$, and so its edge probability~\eqref{edge_probability} is 
	$(n^2p_n \wedge 1)  = n^2 p_n \geq   \frac{\log(n-2)+c(n)}{n-2} $ (here we use~\eqref{eq:inequality-rest-of-proof}). 
% {\color{red}[Tung: we don't need $\wedge 1$ here since we already said earlier that for the rest of the proof we can assume $p_n\ll\frac{1}{n^2}$.].} {\color{teal} FIXED.}
 It is well known that $\frac{\log n}{n}$ is the edge-probability threshold for connectivity of random graphs with $n$ vertices and uniform edge probabilities~\cite{ER:giantcomponent}. So, for any $i\neq j$ (with $1 \leq i, j \leq n$), we have:
\begin{align} \label{eq:conn}
\lim_{n\to\infty}\P(G_n^{i,j} \text{ is connected} )~=~ 1~.
\end{align}
We emphasize that the above probability does not depend on the choice of $i,j$. So, for convenience, we denote $d_n:=\P(G_n^{i,j} \text{ is connected} )$. 
 Next, we compute the following probability using~\eqref{edge_probability}:
\[
\P(A_{k,i,j}) ~=~ (n^2p_n) (np_n) d_n ~=~ n^3p_n^2 d_n~,
\]
which implies the following:
\begin{align} \label{eq:expectation-Tn}
\E T_n ~=~ \sum_{k,i,j}\P(A_{k,i,j}) ~=~ n(n-1)(n-2) n^3p_n^2 d_n
	~=~
	n^4 (n-1)(n-2)  p_n^2 d_n 
	 ~.
%	 ~ \leq ~ n^6 p_n^2 d_n~.
\end{align}
Recall that $d_n\sim 1$ (from~\eqref{eq:conn}), so we have $\E T_n \sim n^6p_n^2$. 

Next, we analyze $\var(T_n)$ by first computing $\P(A_{k_1,i_1,j_1}\cap A_{k_2,i_2,j_2})$ where $(k_1,i_1,j_1)\neq (k_2,i_2,j_2)$. We have the following cases. 

% CASE 1
\noindent \textbf{Case 1:} $k_1=k_2$. In this case, we have $(i_1,j_1)\neq(i_2,j_2)$, and 
 the number of such pairs of events is $\leq n^5$.  
%\sout{Thus there are $n(n-1)(n-2)((n-1)(n-2) - 1)$ such pairs of events.}
Each pair of events occurs precisely when $G_n$ contains the (distinct) reactions $X_{k_1}\leftrightarrows 2X_{k_1}$, $X_{i_1}\leftrightarrows X_{j_1}+X_{k_1}$, $X_{i_2}\leftrightarrows X_{j_2}+X_{k_1}$, and both $G_n^{i_1,j_1}$ and $G_n^{i_2,j_2}$ are connected. Thus, for this case, we use~\eqref{edge_probability} to compute:
\[
\P(A_{k_1,i_1,j_1}\cap A_{k_2,i_2,j_2}) ~\leq~ n^2p_n(np_n)^2d_n ~=~ n^4p_n^3d_n.
\]
% CASE 2
\noindent \textbf{Case 2:} $k_1\neq k_2, i_1=i_2,j_1=k_2,j_2=k_1$. 
The number of such pairs of events is $n(n-1)(n-2)$. % $\leq n^3$.  
Each pair of events occurs when $G_n$ contains the (distinct) reactions $X_{k_1}\leftrightarrows 2X_{k_1}$, $X_{k_2}\leftrightarrows 2X_{k_2}$, $X_{i_1}\leftrightarrows X_{j_1}+X_{k_1}$, and both $G_n^{i_1,j_1}$ and $G_n^{i_2,j_2}$ are connected. Thus for this case 
\[
	\P(A_{k_1,i_1,j_1}\cap A_{k_2,i_2,j_2}) ~\leq~ (n^2p_n)^2(np_n)d_n 
	~=~ n^5p_n^3d_n.
\]
% CASE 3
\noindent \textbf{Case 3:} $k_1\neq k_2$ and either $i_1=i_2, (j_1,j_2)\neq(k_2,k_1)$ or $i_1\neq i_2$. 
 We claim that the number of such pairs of events is 
	$n(n-1)(n-2) \left( (n-1)^2(n-2) - 1 \right)$.  
	%$n(n-1)(n-2)(n^3 - 4 n^2 +5n - 3)$.  
Indeed, this number is obtained by taking the total number of pairs in Cases 2 and 3 (i.e., all pairs where $k_1 \neq k_2$) -- which is 
readily seen to be
 $n(n-1)^3(n-2)^2$ 
--
and then subtracting the number in Case 2 (and simplifying).
%$n(n-1)((n-1)^2(n-2)^2-(n-2))$.} 
%
%{\color{red} Anne got a possibly different number: first subcase yields $n(n-1)(n-2)(n-3)^2$, the second subcase yields $n(n-1)(n-2)^2\left( n-1 + (n-2)^2 \right)$.  Double-check?}
%
%{\color{blue}
%\begin{verbatim}
% For Case 3, I did k_1\neq k_2 and subtracted Case 2. 
%-For k_1\neq k_2 , we have n options for k_1 . Then we have n-1 for k_2 . 
%Since we already pick k_1 ,
%there are n-1 options for i_1 and n-2 options for j_1 (they could be equal k_2 ). 
%And since we already
%pick k_2 , there are n-1 options for i_2 and n-2 options for j_2 .
%So there are n(n-1)(n-1)^2(n-2)^2 pairs. 
%-For case 2 there are n-2 options for i_1=i_2 , and we don't need to pick j_1,j_2 
%so there are
%n(n-1)(n-2) pairs.
%Then I took the difference.
%
%In your calculation for the subcase i_1\neq i_2 
%maybe you took i_1 to be different from both k_1 and
%k_2 ? I think it just needs to be different from k_1 and 
%could still be equal to k_2 .
%\end{verbatim}
%}
%
Next, each pair of events in Case~3 occurs when $G_n$ contains the (distinct) reactions $X_{k_1}\leftrightarrows 2X_{k_1}$, $X_{k_2}\leftrightarrows 2X_{k_2}$, $X_{i_1}\leftrightarrows X_{j_1}+X_{k_1}$, $X_{i_2}\leftrightarrows X_{j_2}+X_{k_2}$, and both $G_n^{i_1,j_1}$ and $G_n^{i_2,j_2}$ are connected. Thus for this case 
\[
\P(A_{k_1,i_1,j_1}\cap A_{k_2,i_2,j_2}) ~\leq~ (n^2p_n)^2(np_n)^2d_n ~=~ n^6p_n^4d_n.
\]
% PUT CASES TOGETHER
Next, we use equation~\eqref{eq:expectation-Tn} and the analysis in Cases~1--3 to bound $\var(T_n)$:
\begin{align} \label{eq:var-bound}
	\notag
\var(T_n) &~=~ \E(T_n^2) - (\E T_n)^2\\
	\notag
	&~=~ 
	\sum_{k,i,j}\P(A_{k,i,j}) ~+ \sum_{(k_1,i_1,j_1)\neq(k_2,i_2,j_2)}\P(A_{k_1,i_1,j_1}\cap A_{k_2,i_2,j_2}) ~-~ (\E T_n)^2\\
	\notag
	&~\leq~ 
	(n^3) n^3p_n^2 d_n 
	+ 
	(n^5) n^4p_n^3d_n 
	+ 
	(n^3) n^5p_n^3d_n \\
	\notag
	& \quad  \quad +
	 %{\color{blue} n(n-1)((n-1)^2(n-2)^2-(n-2)) n^6p_n^4d_n}
	 n(n-1)(n-2) \left( (n-1)^2(n-2) - 1 \right)
	 n^6p_n^4d_n	 
	%\\ 
%	 {\color{violet} 	
%	n(n-1)(n-2)(n^3 - 4 n^2 +5n - 3)
%	 n^6p_n^4d_n} 
	 ~-~ 
	%n^2(n-1)^2(n-2)^2 n^{6} p_n^4d_n^2
	n^8(n-1)^2(n-2)^2  p_n^4d_n^2\\
	\notag
	&~ \leq ~ 
	n^6p_n^2 d_n + 
	n^9p_n^3d_n 
	+ n^8p_n^3d_n 
	+  n^8(n-1)^2(n-2)^2 p_n^4d_n 
	- 
	%n^2(n-1)^2(n-2)^2 n^{6} p_n^4d_n^2
	n^8(n-1)^2(n-2)^2 p_n^4 d_n^2\\
	&~ \leq ~ 
	(n^3p_n)^2 d_n + 
	(n^3p_n)^3d_n 
	+ n^8p_n^3d_n 
	+ (n^3p_n)^4 d_n (1-d_n)~.	
\end{align}
%{\color{brown} Anne to Tung: The term marked in orange, above, is the one I don't know how to handle.  I am guessing that this is where the $\log$ bound is used.  Can you fill in some details?  {\color{blue}[I added more explanation in the paragraph below.]} The old computation/bound for the variance is here:
%\begin{align*}
%	& ~=~ n(n-1)(n-2) n^3p_n^2 d_n + n(n-1)(n-2)((n-1)(n-2)-1)n^4p_n^3d_n \\
%	&+ n(n-1)(n-2)n^5p_n^3d_n + n(n-1)((n-1)^2(n-2)^2-(n-2))n^6p_n^4d_n \\
%	&- n^2(n-1)^2(n-2)^2n^6p_n^4d_n^2\\
%	&\leq n(n-1)(n-2) n^3p_n^2 d_n + n(n-1)(n-2)((n-1)(n-2)-1)n^4p_n^3d_n \\
%	&+ n(n-1)(n-2)n^5p_n^3d_n + n^2(n-1)^2(n-2)^2n^6p_n^4d_n(1-d_n).
%\end{align*}
%}

As noted earlier, inequality~\eqref{eq:inequality-rest-of-proof} implies that $p_n \gg \frac{1}{n^3}$.  
%Since $p_n\geq \frac{\log(n-2)+c(n)}{n^2(n-2)} \gg \frac{1}{n^3}$, 
Hence, certain terms appearing in~\eqref{eq:var-bound} have the following asymptotic properties: 
$(n^3p_n)^2 \ll  (n^3p_n)^4$ and  $n^8p_n^3 \ll (n^3p_n)^3 \ll  (n^3p_n)^4$. 
Recall also that $d_n \sim 1$.  Thus, from~\eqref{eq:var-bound}, we have 
%Furthermore, since $1-d_n \ll 1$, it is straightforward to see that 
$\var(T_n)\ll (n^3p_n)^4 = n^{12}p_n^4$. 
Finally, we showed earlier that $\E T_n \sim n^6p_n^2$, so we have $\var(T_n) \ll (\E T_n)^2$, which concludes the proof.
\end{proof}

%EXAMPLE WAS HERE.  ANNE MOVED IT EARLIER

Finally, we address the small ``window'' between the thresholds in Propositions \ref{prop:mss}--\ref{prop:noACR}. We showed that a random network in this window is nondegenerately multistationary w.h.p, and next we show that it also has ACR w.h.p.

\begin{proposition}[ACR in window of dense regime] \label{prop:ACR}
Consider random reaction networks $G_n$ generated by edge probabilities given by \eqref{edge_probability}.
 If $\{p_n\}_{n \geq 1}$ satisfies the following:
\begin{align} \label{eq:window}
 \ds \frac{1}{n^3} ~\ll~ p_n ~\leq~ \frac{\frac{2}{17}\log(n)-c(n)}{n^3} \quad \mathrm{for~some}~c(n)\to\infty~, 
 \end{align}
 then w.h.p $G_n$ has unconditional ACR in some species.
\end{proposition}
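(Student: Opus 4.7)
The plan is to locate, with high probability, a ``quiet'' species $X_s$ of $G_n$---one that appears in no reaction together with any other species---and then to observe that because $0 \leftrightarrows X_s$ and $0 \leftrightarrows 2X_s$ are automatically present in the dense regime (Remark~\ref{rem:all-flows}), the ODE for $x_s$ is a self-contained quadratic whose unique positive root forces unconditional ACR in $X_s$ for the whole network~$G_n$.

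First, for each $s \in \{1, \ldots, n\}$, I would let $N_s$ denote the number of reactions in $G_n$ involving $X_s$ together with at least one other species, and compute $\E[N_s]$ by stratifying over the four contributing reaction types. The numbers of such reactions are $n-1$ in $E_{0,2}$, $4(n-1)$ in $E_{1,1}$, $(n-1)(3n-2)$ in $E_{1,2}$, and $\tfrac{1}{2}n(n-1)(n-2)$ in $E_{2,2}$; multiplying by the respective edge probabilities $n^2 p_n,\ n^2 p_n,\ np_n,\ p_n$ and summing gives $\E[N_s] = \tfrac{17}{2} n^3 p_n + O(n^2 p_n)$, where the leading coefficient is exactly $1+4+3+\tfrac{1}{2} = \tfrac{17}{2}$. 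The hypothesis on $p_n$ then forces $\E[N_s] \le \log n - \tfrac{17}{2} c(n) + o(1)$. By edge independence, $\P(N_s = 0) = \prod_r (1 - p_r)$, and the bound $\log(1 - x) \ge -x - x^2$ together with $\sum_r p_r^2 = O(n^5 p_n^2) = O((\log n)^2 / n) = o(1)$ yields $\P(N_s = 0) \ge \tfrac{1}{n} e^{(17/2) c(n) + o(1)}$. Hence $M_n := \sum_s 1_{\{N_s = 0\}}$ satisfies $\E[M_n] \to \infty$.

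Next, a second-moment calculation: for $s \neq t$, the reactions of $G_n$ involving both $X_s$ and $X_t$ contribute only $O(n^2 p_n) = o(1)$ to $\sum_r p_r$, so $\P(N_s = 0, N_t = 0) = \P(N_s = 0)\P(N_t = 0)(1 + o(1))$, which forces $\var(M_n) = o(\E[M_n]^2)$; Lemma~\ref{lem:2nd_moment} then gives $\P(M_n > 0) \to 1$. Finally, fix any outcome with some $N_s = 0$: every reaction of $G_n$ touching $X_s$ lies in $\{0 \leftrightarrows X_s,\ 0 \leftrightarrows 2X_s,\ X_s \leftrightarrows 2X_s\}$, and the first two are guaranteed. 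For every choice of rate constants, the $s$-th ODE reads $\dot x_s = A + B x_s - C x_s^2$ with $A, C > 0$, which has a unique positive root independent of the rates of all other reactions. Reversibility of $G_n$ supplies a positive steady state (Remark~\ref{rem:reversible}), so every positive steady state of $(G_n, \kappa)$ shares this value of $x_s$, yielding unconditional ACR in $X_s$.

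The main obstacle is the precise bookkeeping that produces the constant $17/2$: each of the four reaction types contributes an exact multiple of $n^3 p_n$, and a looser count would weaken the constant in the statement and leave a gap with Proposition~\ref{prop:noACR}, shrinking the window of co-existence. The second-moment estimate and the one-dimensional ODE analysis are then routine once the counting is carried out.
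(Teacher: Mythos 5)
Your proof is correct and follows essentially the same approach as the paper: a second-moment argument applied to a sum of per-species indicator events, followed by the observation that the surviving species has a self-contained quadratic ODE $A+Bx_s-Cx_s^2$ with $A,C>0$ and hence a unique positive root, with reversibility supplying existence of a positive steady state. The only difference is the event itself---you require $X_s$ to appear in no reaction with another species (forbidding catalytic appearances such as $X_s+X_i\leftrightarrows X_s+X_j$ but allowing $X_s\leftrightarrows 2X_s$), whereas the paper's event $B_k$ only requires $X_k$ to be catalyst-only in all non-flow reactions (allowing catalytic appearances but forbidding $X_k\leftrightarrows 2X_k$); the two excluded reaction sets differ only in lower-order terms, so both give total probability mass $\tfrac{17}{2}n^3p_n(1+o(1))$ and recover the same constant.
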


\begin{proof}
Assume~\eqref{eq:window}.  
As $\frac{1}{n^3} \ll p_n$, the random network $G_n$ contains all reactions in $E_{0,1}$, namely, $0 \leftrightarrows X_k$ and $0 \leftrightarrows 2X_k$, for all $1 \leq k \leq n$ (Remark~\ref{rem:all-flows}).
For $1 \leq k \leq n$, let $B_k= B_{k,n}$ denote the event that, in all other reactions of $G_n$, the species $X_k$ appears as a catalyst-only species.  
We claim that the event $B_k$ implies that $G_n$ has unconditional ACR in $X_k$.  Indeed, in this event, 
the mass-action ODE for species $X_k$ (for any choice of positive rate constants) has the following form:
	\[
	\frac{dx_k}{dt} ~=~ c_2x_k^2 + c_1 x_k +c_0 ~,
	\]
where $c_0 >0$ and $c_2<0$ (and $c_1 \in \mathbb{R}$). 
This quadratic polynomial has a unique positive root (by Descartes' rule).  Additionally, $G_n$ necessarily admits a positive steady state (Remark~\ref{rem:reversible}).  
We conclude that $G_n$ has unconditional ACR in $X_k$ when $B_k$ occurs.

It therefore suffices to show that, for the random variables $T_n:=\sum_{k=1}^n1_{B_k}$, 
the following limit holds:
$\lim_{n\to\infty} \P(T_n>0) = 1$.  
Hence, by Lemma \ref{lem:2nd_moment}, it is enough to prove $\var(T_n)\ll (\E T_n)^2$. 

We begin with computing $\E T_n$. 
For fixed $k$, let $\overline{B}_{0,2},\overline{B}_{1,1},\overline{B}_{1,2},\overline{B}_{2,2}$ be the sets of edges (reactions) in $E_{0,2},E_{1,1},E_{1,2},E_{2,2}$, respectively,  in which species $X_k$ appears as a non-catalyst-only species.  So, by construction, $B_k$ occurs if and only if $G_n$ contains no reaction from the sets $\overline{B}_{i,j}$:  
	\begin{align*}
	\overline{B}_{0,2}
		~&=~
			\{ 0 \leftrightarrows X_k + X_{j} \mid j \neq k,~ 1 \leq j \leq n\} \\
	\overline{B}_{1,1}
		~&=~	
			\{a X_k \leftrightarrows b X_j \mid a, b=1,2,~ j \neq k, ~ 1 \leq j \leq n \}  ~\cup ~  \{ X_k \leftrightarrows 2X_k\}
		\\
	\overline{B}_{1,2}
		~&=~		
			\{ X_k \leftrightarrows X_j + X_{\ell} \mid j \neq \ell, ~ j,\ell \neq k,~ 1 \leq j,\ell \leq n\}
			~\cup~
			\{ 2X_k \leftrightarrows X_j + X_{\ell} \mid j \neq \ell, ~1 \leq j,\ell \leq n\} \\
			& \quad \quad \quad  ~\cup~
			\{ aX_j \leftrightarrows X_K + X_{\ell} \mid j,\ell \neq k, ~1 \leq j,\ell \leq n\}
					\\
	\overline{B}_{2,2}
		~&=~	
			\{ X_k + X_i \leftrightarrows X_j + X_{\ell} \mid i,j,\ell \neq k,~ j \neq \ell,~ 1 \leq i,j,\ell \leq n \}~.
	\end{align*}

%{\color{brown}
%\sout{ 
%cannot be present in the event $B_k$. Observe that there are two vertices in $C_1$ containing $X_k$, which are the vertices $X_k$ and $2X_k$, and there are $n-1$ vertices in $C_2$ containing $X_k$. Thus there are $2n-2$ $C_1$-vertices and $(n-1)(n-2)/2$ $C_2$-vertices not containing $X_k$. With the exception of $0\leftrightarrows X_k$ and $0\leftrightarrows 2X_k$, we count the number of possible edges among vertices containing $X_k$ where $X_k$ is not a catalyst-only species, and the number of edges between a vertex containing $X_k$ and a vertex not containing $X_k$.}} 
It is then straightforward to compute the cardinalities of the sets  $\overline{B}_{i,j}$: 
\[|\overline{B}_{0,2}|=n-1, \quad |\overline{B}_{1,1}|=4n-3, \quad |\overline{B}_{1,2}|=(n-1)(3n-3), \quad |\overline{B}_{2,2}|=\frac{(n-1)^2(n-2)}{2}.\]
%\begin{itemize}
%\item In $E_{0,2}$ there are $n-1$ possible edges between $0$ and $X_k+X_i\in C_2$.
%\item In $E_{1,1}$ there is 1 possible edge between $X_k$ and $2X_k$ and $2(2n-2)$ possible edges between $X_k,2X_k$ and other vertices of $C_1$.
%\item In $E_{1,2}$ there are $(n-1)$ possible edges between $2X_k$ and $X_k+X_h\in C_2$. There are $2(n-1)(n-2)/2=(n-1)(n-2)$ possible edges between $X_k,2X_k$ and $X_i+X_j\in C_2$ where $i,j\neq k$. There are $(n-1)(2n-2)$ possible edges between $X_k+X_h\in C_2$ and $X_i,2X_i\in C_1$ where $i\neq k$. The total number of such edges is $(n-1)(3n-3)$.
%\item In $E_{2,2}$ there are $(n-1)^2(n-2)/2$ possible edges between one of $X_k+X_h\in C_2$ and $X_i+X_j \in C_2$ where $i,j\neq k$. 
%\end{itemize}
Thus, using the edge probabilities~\eqref{edge_probability} and the hypothesis $p_n < \frac{1}{n^2}$, 
we get $ \P(B_k)$ and hence $\E T_n$, as follows:
\begin{align} \label{eq:probab-b}
    \P(B_k) &~=~ (1-n^2p_n)^{n-1+4n-3}(1-np_n)^{(n-1)(3n-3)}(1-p_n)^{(n-1)^2(n-2)/2}\\
    \notag
    &~=~ (1-n^2p_n)^{5n-4}(1-np_n)^{(n-1)(3n-3)}(1-p_n)^{(n-1)^2(n-2)/2},\\
    \label{eq:expect-T}
\E T_n &~=~ \sum_{k=1}^n\P(B_k)=n\P(B_k) = n(1-n^2p_n)^{5n-4}(1-np_n)^{(n-1)(3n-3)}(1-p_n)^{(n-1)^2(n-2)/2}.
\end{align}

We recall the following, which is well known: 
\\
\noindent {\bf Fact:}
For a sequence $\lambda(n)$ and $q \geq 1$, if $\lambda(n) \ll n$, then  $(1- \frac{\lambda(n)}{n^q})^{n^q} \sim e^{-\lambda(n)}$.

%{\color{brown} 
%\sout{
%When $\lambda(n)\ll n^{1/2}$, we have $(1-\lambda(n)/n)^n\sim e^{-\lambda(n)}$. }. 
%}

To apply this fact, we use~\eqref{eq:window} to obtain %$n p_n \ll n^2 p_n \ll 
$n^3 p_n \leq  \left( \log(n) - c(n) \right) \ll n$.  We can now apply the fact (with $q=1,2,3$) to the following three factors in~\eqref{eq:expect-T}:  
$(1-n^2p_n)^{5n-4}$, 
$(1-np_n)^{(n-1)(3n-3)}$, 
and $(1-p_n)^{(n-1)^2(n-2)/2}$. One of these analyses is shown below (and the other two are similar):
\begin{small}
	\begin{align*}
	(1-np_n)^{(n-1)(3n-3)}
	~=~
	\left( \left(
	1 - \frac{n^3p_n}{n^2}
	\right)^{n^2}	
	\right)^{\frac{(n-1)(3n-3)}{n^2}}
	~\sim ~
	\left( e^{-n^3 p_n}
	\right)^{\frac{(n-1)(3n-3)}{n^2}}
	~=~
	e^{-p_n n (n-1)(3n-3)}~.
	\end{align*}
\end{small}
	
The resulting three limits combine to yield the first limit here:

%{\color{brown} 
% \sout{
%Since $p_n\leq\frac{\log(n)-c(n)}{n^3}$, we obtain $n^2p_n \ll \frac{n^{1/2}}{n}$ and thus, we get the estimate} 
%}
\[
\E T_n ~\sim~ n e^{-p_n(n^2(5n-4)+n(n-1)(3n-3)+(n-1)^2(n-2)/2)} \geq ne^{-\frac{17}{2}n^3p_n} \geq ne^{-\log(n)+\frac{17}{2}c(n)} = e^{\frac{17}{2}c(n)} \gg 1~,
\]
 and the remaining inequalities are direct computations or come from~\eqref{eq:window}.
%Similar to Proposition \ref{lemma:MM}, it suffices to show 
%\[
%\lim_{n\to\infty}\frac{\var(T)}{(\E T)^2} = 0.
%\]

Next, we compute $\P(B_k\cap B_h)$. To that end, for fixed $k, h$ (with $1 \leq k, h \leq n$ and $k\neq h$), let $\overline{A}_{0,2},\overline{A}_{1,1},\overline{A}_{1,2},\overline{A}_{2,2}$ denote the sets of edges in $E_{0,2},E_{1,1},E_{1,2},E_{2,2}$, respectively, in which species $X_k$ or $X_h$ (or both) appear as a non-catalyst-only species. 
By construction, $B_k\cap B_h$ occurs if and only if $G_n$ contains no reaction from the sets $\overline{A}_{i,j}$.  Also, $\overline{A}_{i,j}$ is the union of two sets of the form $\overline{B}_{i,j}$, one for $k$ and one for $h$.
%{\color{brown}\sout{
%that do not appear in $B_k\cap B_h$ for given $k$ and $h$. }}
Thus, the cardinalities of $\overline{A}_{i,j}$ are computed (in a straightforward way) using the inclusion-exclusion principle: 
%A straightforward {\color{violet} (inclusion-exclustion)} computation gives:
	\begin{align*}
	|\overline{A}_{0,2}| ~&=~ 2 | \overline{B}_{0,2}| -1~, \hspace{.61in}
	%\\
	|\overline{A}_{1,1}| ~=~2 | \overline{B}_{1,1}| -4~, \quad \\
	|\overline{A}_{1,2}| ~&=~2 | \overline{B}_{1,2}| -  4(n-2)~, %\\ 
	\quad 
	 |\overline{A}_{2,2}|  ~=~ 2 | \overline{B}_{2,2}| -  (n-2)(3n-7)/2 ~.	
%	|\overline{A}_{0,2}| ~&=~2(n-1)-1~, \quad 
%	\\
%	|\overline{A}_{1,1}| ~&=~2(4n-3)-4~, \quad \\
%	|\overline{A}_{1,2}| ~&=~2(n-1)(3n-3)-4(n-2)~, \\ 
%	 |\overline{A}_{2,2}|  ~&=~ 2\frac{(n-1)^2(n-2)}{2}-(n-2)(n-3)~.
	\end{align*}
%\begin{itemize}
%    \item In $E_{0,2}$ there are $n-1$ possible edges which must not appear in each of the events $B_k$ and $B_h$. Since the edge $0 \leftrightarrows X_k+X_h$ must not appear in both, the number of possible edges in $E_{0,2}$ that must not be present in the event $B_k\cap B_h$ is $2(n-1)-1$.
%    \item In $E_{1,1}$ there are $1+2(2n-2)=4n-3$ possible edges which must not appear in each of the events $B_k$ and $B_h$. There are 4 edges between either $B_k$ or $2B_k$ and $B_h$ or $2B_h$ which must not appear in both, so the number of possible edges in $E_{1,1}$ that must not be present in the event $B_k\cap B_h$ is $2(4n-3)-4$.
%    \item In $E_{1,2}$ there are $(n-1)(3n-3)$ possible edges which must not appear in each of the events $B_k$ and $B_h$. There are $2(n-2)$ possible edges between $X_k$ or $2X_k$ and $X_h+X_i\in C_2$ and there are $2(n-2)$ possible edges between $X_h$ or $2X_h$ and $X_k+X_i\in C_2$ which must not appear in both events. So the number of possible edges in $E_{1,2}$ that must not be present in the event $B_k\cap B_q$ is $2(n-1)(3n-3)-4(n-2)$.
%    \item In $E_{2,2}$ there are $(n-1)^2(n-2)/2$ possible edges which must not appear in each of the events $B_k$ and $B_h$. There are $(n-2)(n-3)$ possible edges between $X_k+X_i\in C_2$ to $X_p+X_j\in C_2$ which must not appear in both events. Thus the number of possible edges in $E_{2,2}$ that must not be present in the event $B_k\cap B_h$ is $2(n-1)^2(n-2)/2-(n-2)(n-3)$.
%\end{itemize}
The ``exclusion'' terms above yield:
\begin{align} \label{eq:prob-intersect-B}
    \P(B_k\cap B_h) &~=~ \P(B_k)^2 (1-n^2p_n)^{-5} (1-np_n)^{-4(n-2)}(1-p_n)^{-(n-2)(3n-7)/2}.
\end{align}
Using~\eqref{eq:prob-intersect-B} and other expressions found above, we compute $\var(T_n)$:
\begin{small}
\begin{align*}
    \var(T_n) &~=~ \E(T_n^2)-(\E T_n)^2 \\
    &~=~ \sum_{k=1}^n\P(B_k) + \sum_{h\neq k}\P(B_k\cap B_h) - (\E T_n)^2 \\
    &~=~ n\P(B_k) + n(n-1)\P(B_k)^2(1-n^2p_n)^{-5} (1-np_n)^{-4(n-2)}(1-p_n)^{-(n-2)(3n-7)/2} - n^2\P(B_k)^2 \\
    &~=~ n\P(B_k)-n\P(B_k)^2g(n)+n^2\P(B_k)^2(g(n)-1)~,
\end{align*}
\end{small}
where $g(n):=(1-n^2p_n)^{-5} (1-np_n)^{-4(n-2)}(1-p_n)^{-(n-2)(3n-7)/2}$.  We claim that $\lim_{n\to\infty}g(n)=1$.  
%\[
%1 ~\leq~ g(n) ~\leq~ e^{-p_n(5n^2+4n(n-2)+(n-2)(n-3))}.
%\]
In fact, since $n^2p_n \ll 1$, we have  (for $n$ sufficiently large) the inequalities 
$1 \leq (1-n^2p_n)^{-5} $ 
and 
$\log(1-n^2p_n) \geq -2n^2p_n$, the second of which further implies 
$ (1-n^2p_n)^{-5} \leq e^{10n^2p_n}$. 
Applying similar inequalities for the remaining two factors of $g(x)$, we obtain:
	\begin{align} \label{eq:g(n)}
	1 ~\leq~ g(n) ~\leq~ e^{p_n \left(10n^2+8(n-2)n+(n-2)(3n-7) \right)} \quad \quad  \mathrm{for}~n~\mathrm{sufficently~large}.
	\end{align}

By~\eqref{eq:window}, the exponent appearing in~\eqref{eq:g(n)}
limits to 0,
 %$p_n(10n^2+8(n-2)n+2(n-2)(n-3)) \to 0$} 
 as $n\to\infty$, so indeed $g(n)\to 1$. % as $n\to\infty$. 
 Hence,
\begin{align*}
    \frac{\var(T_n)}{(\E T_n)^2} 
%    & = \frac{ n\P(B_k)-n\P(B_k)^2g(n)+n^2\P(B_k)^2(g(n)-1)}{n^2\P(B_k)^2}\\
    &~=~\frac{1}{n\P(B_k)}-\frac{g(n)}{n}+g(n)-1  
    ~=~\frac{1}{\E T_n}-\frac{g(n)}{n}+g(n)-1
    ~ \to ~ 0 + 0 + 1 - 1  ~=~0~, 
%    \\
%    &=\frac{1}{\E T_n}-\frac{g(n)}{n}+g(n)-1
\end{align*}
as $n \to \infty$, 
where we also use $\E T_n\gg 1$. % and $\lim_{n\to\infty}g(n)=1$, we have 
Thus, 
%$\lim_{n\to\infty}\frac{\var(T_n)}{(\E T_n)^2}=0$, 
$\var(T_n)\ll (\E T_n)^2$, 
which completes the proof.
%as desired.
\end{proof}

\begin{remark}[Decoupling in window of dense regime] \label{remark:decouple} 
The proof of Proposition \ref{prop:ACR} %(particularly from the definition of the event $B_k$) 
shows that, if $\frac{1}{n^3}\ll p_n \leq \frac{\frac{2}{17}\log(n)-c(n)}{n^3}$, then w.h.p.\ a random network $G_n$ contains a subnetwork of the form $\{0 \leftrightarrows X_k, ~ 0 \leftrightarrows 2X_k\}$ for some species $X_k$ that is a catalyst-only species in all other reactions. 
This highlights the fact that unconditional ACR 
arises because $G_n$ is a union of two ``almost decoupled''  subnetworks, one with ACR and the other with multistationarity (by Proposition~\ref{prop:mss}) w.h.p.
%mainly as a result of being a union of two subnetworks, one with ACR and the other with multistationarity, in an ``almost decoupled'' way. 
\end{remark}

\section{Discussion} \label{sec:discussion}
%\begin{itemize}
%	\item Hint at future paper(s)
%	\item Medium-number of species -- how to check multistationarity?  (Challenge.  Maybe want database of small multistationary networks, to look for inside the original network.  Connect with ``co-existence'' paper.)
%\end{itemize}

We have shown that it is highly atypical for multistationarity and ACR to coexist
in certain random reaction networks. In particular, for the type-homogeneous stochastic block model, the window for co-existence is relatively small: It corresponds to when the expected number of edges is approximately between $n$ and $\frac{2}{17}n\log(n)$, where $n$ is the number of species. 
%{\cre (there is no $c_n$ here. Is that okay?)}
This window does not even exist unless $n$ is quite large  (Remark~\ref{remark:window}).  Moreover, 
when this window exists, the resulting 
random networks % in this window (when it exists) 
exhibit multistationarity and ACR simply as a result of 
nearly decoupling into 
%being nearly a mutually exclusive union of 
two subnetworks, one with ACR and the other with multistationarity (Remark \ref{remark:decouple}).

These results suggest that reaction networks that combine multistationarity and ACR in a nontrivial way require specialized architecture, and the properties do not occur together coincidentally. 
Of course, real biochemical networks are far from random and exist only %in nature 
when they offer a selective advantage to the organism in its environment. 
It is a reasonable speculation that combining the two seemingly opposite properties may be favorable. 
A biochemical network may require robustness in its internal operation while maintaining flexibility as a signal-response mechanism. 
%{\color{violet} \sout{Certain species concentrations maintain invariability even when the system is open and interacting, while other species concentrations show flexibility in their response that distinguishes between distinct input signals.}}
Said differently,  such a network may operate through an essential combination of ACR with multistability. 

These ideas raise a natural question: Which special structures, even if statistically rare, can produce ACR and multistability in networks of biochemically reasonable size and complexity?
In future work, we will report on such mechanisms and their underlying principles. % that generate coexistence of the two traits. 
Interestingly, we find families of biochemical networks with ACR and multistationarity that %moreover 
employ ubiquitous designs such as enzyme-catalyzed reactions, lock-and-key mechanisms for enzyme binding, and redundancy through parallel pathways. 
%{\color{brown} Add transition here?  it went from ``architecture'' to a new topic of ``flexible vs.\ robust'', and later in the paragraph returns to architecture/structure.}
%Multistationarity provides a basis for a flexible response while ACR undergirds robustness. 
%The two seemingly contradictory features can coexist along independent coordinate axes. 
%Combining a flexible, adaptable response with robustness that preserves essential functions may in fact be evolutionarily favored. 
%It is therefore conceivable that natural selection discovers the special structures that endow a biochemical network with both multiple stable steady states as well as absolute concentration robustness despite their mathematical rarity. 
%In future work, we present and study in depth interesting families of biochemical networks that combine the two traits. Moreover, the networks use ubiquitous biochemical motifs such as enzyme-catalyzed reactions, lock-and-key mechanisms for enzyme binding, and redundancy through parallel pathways. 
% structures or careful constructions. Such a class of networks is the topic of future work; interestingly, these networks display a ``lock and key'' mechanism that is relevant for many biological processes.
% NOTE: induced-fit model is now regarded as a better theory for enzymatic reactions than the original "lock and key" mechanism proposed in 1890 by Emil Fischer.

Returning to the current work, we gave asymptotic results on multistationarity when $n$ (the number of species) is large.  We are also interested in multistationarity when $n$ is of medium size (say, $n=10$ to $30$).  We would like to investigate, by generating random such networks (at various edge-probabilities), what fraction are multistationary. Although checking multistationarity is generally difficult, an approach used here --
%we used in this work for establishing multistationarity -- 
namely, finding a small multistationary motif (ours had only $3$ species) and then lifting it -- can be applied.  
%Specifically, %the algorithm 
%would first check the original network and attempt to find any small multistationary subnetwork. Next, the algorithm would check if there is a lifting component that can lifts multistationarity to the whole network (see CITE for a survey of possible lifting conditions). 
For performing this task, note that certain classes of small multistationary networks have been established~\cite{atoms_multistationarity, joshi2017small, tang2021multistability}, as have various criteria for lifting multistationarity (surveyed in~\cite{splitting-banaji}).%, mss-review}.  

Going forward, it would be interesting to discover more small multistationary motifs.  Are there more multistationary networks with only $3$ species that are well suited for lifting to larger networks?  Establishing such networks might aid in analyzing the prevalence of multistationarity -- with or without ACR -- in random reaction networks generated by stochastic block models besides the type-homogeneous one we focused on here. 

A final promising direction is to study the prevalence and thresholds of other reaction-network properties. In particular, properties that can be lifted from small networks to larger ones -- such as periodic orbits \cite{splitting-banaji, banaji-boros, erban-kang, tang-wang} 
-- can also be analyzed in our random-network framework. % we established in the current paper.
Do periodic orbits co-exist with ACR in random networks?  If so, then, as is the case for multistationarity and ACR, the window of co-existence is likely very small.

%it is important to extend the database of known small multistationary networks, for instance, by investigating multistationarity in networks with three species. In particular, we could extend the motif we use in the current paper to a more general class of networks, and find other classes of 3-species networks exhibiting multistationarity.

%{\color{red} Another future direction: Prevalence of oscillations.}

%{\color{blue} Temporary: suggested editors -- Rubin, Gedeon, Qian; suggested referees: Banaji, Cappelletti, Craciun, Boros, Jinsu Kim.  }
%{\color{brown} Remove this comment after a cover letter is written, with this info.}

%*******************************************************************
%Acknowledgements
%*******************************************************************
\subsection*{Acknowledgements} {\small
This project began at an AIM workshop on ``Limits and control of stochastic reaction networks'' held online in July 2021.  
AS was supported by the NSF (DMS-1752672).  
BJ was supported by the NSF (DMS-2051498). 
The authors thank Elisenda Feliu for helpful discussions and David F. Anderson for comments on an earlier draft.
}

\bibliographystyle{plain}
	\bibliography{bib}

\end{document}